\newcommand{\sumprime}{\if@display\sideset{}{'}\sum%
            \else\sum'\fi}
\begin{document}

\numberwithin{equation}{section}

\newtheorem{theorem}{Theorem}[section]
\newtheorem{proposition}[theorem]{Proposition}
\newtheorem{conjecture}[theorem]{Conjecture}
\def\theconjecture{\unskip}
\newtheorem{corollary}[theorem]{Corollary}
\newtheorem{lemma}[theorem]{Lemma}
\newtheorem{observation}[theorem]{Observation}
\newtheorem{definition}{Definition}
\numberwithin{definition}{section} 
\newtheorem{remark}{Remark}
\def\theremark{\unskip}
\newtheorem{question}{Question}
\def\thequestion{\unskip}
\newtheorem{example}{Example}
\def\theexample{\unskip}
\newtheorem{problem}{Problem}

\def\vvv{\ensuremath{\mid\!\mid\!\mid}}
\def\intprod{\mathbin{\lr54}}
\def\reals{{\mathbb R}}
\def\integers{{\mathbb Z}}
\def\N{{\mathbb N}}
\def\complex{{\mathbb C}\/}
\def\dist{\operatorname{dist}\,}
\def\spec{\operatorname{spec}\,}
\def\interior{\operatorname{int}\,}
\def\trace{\operatorname{tr}\,}
\def\cl{\operatorname{cl}\,}
\def\essspec{\operatorname{esspec}\,}
\def\range{\operatorname{\mathcal R}\,}
\def\kernel{\operatorname{\mathcal N}\,}
\def\dom{\operatorname{Dom}\,}
\def\linearspan{\operatorname{span}\,}
\def\lip{\operatorname{Lip}\,}
\def\sgn{\operatorname{sgn}\,}
\def\Z{ {\mathbb Z} }
\def\e{\varepsilon}
\def\p{\partial}
\def\rp{{ ^{-1} }}
\def\Re{\operatorname{Re\,} }
\def\Im{\operatorname{Im\,} }
\def\dbarb{\bar\partial_b}
\def\eps{\varepsilon}
\def\O{\Omega}
\def\Lip{\operatorname{Lip\,}}

\def\Hs{{\mathcal H}}
\def\E{{\mathcal E}}
\def\scriptu{{\mathcal U}}
\def\scriptr{{\mathcal R}}
\def\scripta{{\mathcal A}}
\def\scriptc{{\mathcal C}}
\def\scriptd{{\mathcal D}}
\def\scripti{{\mathcal I}}
\def\scriptk{{\mathcal K}}
\def\scripth{{\mathcal H}}
\def\scriptm{{\mathcal M}}
\def\scriptn{{\mathcal N}}
\def\scripte{{\mathcal E}}
\def\scriptt{{\mathcal T}}
\def\scriptr{{\mathcal R}}
\def\scripts{{\mathcal S}}
\def\scriptb{{\mathcal B}}
\def\scriptf{{\mathcal F}}
\def\scriptg{{\mathcal G}}
\def\scriptl{{\mathcal L}}
\def\scripto{{\mathfrak o}}
\def\scriptv{{\mathcal V}}
\def\frakg{{\mathfrak g}}
\def\frakG{{\mathfrak G}}

\def\ov{\overline}

\thanks{Supported by Grant IDH1411041 from Fudan University}

\address{School of Mathematical Sciences, Fudan University, Shanghai 200433, China}
 \email{boychen@fudan.edu.cn}

\title{Bergman kernel and hyperconvexity index}
\author{Bo-Yong Chen}
\date{}
\maketitle

\bigskip

\centerline{{\small \it Dedicated to Professor John Erik Fornaess on the occasion of his 70-th birthday}}

\bigskip

\begin{abstract}
Let $\Omega\subset {\mathbb C}^n$ be a bounded domain with the hyperconvexity index $\alpha(\Omega)>0$. Let $\varrho$ be the relative extremal function of a fixed closed ball in $\Omega$ and set  $\mu:=|\varrho|(1+|\log|\varrho||)^{-1}$, $\nu:=|\varrho|(1+|\log|\varrho||)^n$.  We obtain the following estimates for the Bergman kernel: (1) For every $0<\alpha<\alpha(\Omega)$ and $2\le p<2+\frac{2\alpha(\Omega)}{2n-\alpha(\Omega)}$, there exists a constant $C>0$ such that
$\int_\Omega |\frac{K_\Omega(\cdot,w)}{\sqrt{K_\Omega(w)}}|^{p}\le C |\mu(w)|^{-\frac{(p-2) n}\alpha}$ for all $w\in \Omega$. (2) For every $0<r<1$, there exists a constant $C>0$ such that
$
 \frac{|K_\Omega(z,w)|^2}{K_\Omega(z)K_\Omega(w)}\le C (\min\{\frac{\nu(z)}{\mu(w)},\frac{\nu(w)}{\mu(z)}\})^r
$ for all $z,w\in \Omega$. Various application of these estimates are given.
\end{abstract}

\bigskip
\section{Introduction}

A domain $\Omega\subset {\mathbb C}^n$ is called\/ {\it hyperconvex\/} if there exists a negative continuous plurisubharmonic (psh) function $\rho$ on $\Omega$ such that $\{\rho<c\}\subset\subset \Omega$ for any $c<0$.
The class of hyperconvex domains is very wide, e.g. every bounded pseudoconvex domain with Lipschitz boundary is hyperconvex (cf. \cite{Demailly87}). Although hyperconvex domains already admit a rich function theory (see e.g. \cite{OhsawaHyperconvex}, \cite{BlockiPflug}, \cite{HerbortHyperconvex}, \cite{PoletskyHardy}), it is not enough to get quantitative results unless one imposes certain growth conditions on the bounded exhaustion function $\rho$ (compare \cite{BerndtssonCharpentier}, \cite{BlockiGreen}, \cite{DiederichOhsawa}).

A meaningful condition is $-\rho\le C \delta^\alpha$ for some constants $\alpha,C>0$, where $\delta$ denotes the boundary distance. Let $\alpha(\Omega)$ be the supremum of all $\alpha$.  We call it the\/ {\it hyperconvexity index\/} of $\Omega$.  From the fundamental work of Diederich-Fornaess \cite{DiederichFornaess77}, we know that if $\Omega$ is a bounded pseudoconvex domain with $C^2-$boundary then there exists a continuous negative psh function $\rho$ on $\Omega$ such that $C^{-1}\delta^\eta\le -\rho\le C \delta^\eta$ for some constants $\eta,C>0$. The supremum $\eta(\Omega)$ of all $\eta$ is called the\/ {\it Diederich-Fornaess index} of $\Omega$ (see e.g. \cite{Adachi}, \cite{FuShaw}, \cite{HarringtonPSH}). Clearly, one has $\alpha(\Omega)\ge \eta(\Omega)$. Recently,
 Harrington \cite{HarringtonPSH} showed that if $\Omega$ is a bounded pseudoconvex domain with Lipschitz boundary then $\eta(\Omega)>0$.

  On the other hand, there are plenty of domains with very irregular boundaries such that $\alpha(\Omega)>0$, while it is difficult to verify $\eta(\Omega)>0$. For instance, Koebe's distortion theorem implies $\alpha(\Omega)\ge 1/2$ if $\Omega\subsetneq {\mathbb C}$ is a simply-connected domain (see \cite{CarlesonGamelin}, Chapter 1, Theorem 4.4).  Recently, Carleson-Totik \cite{CarlesonTotik} and Totik \cite{Totik} obtained various Wiener-type criterions for planar domains with positive hyperconvexity indices. In particular, if $\partial \Omega$ is uniformly perfect in the sense of Pommerenke \cite{PommerenkePerfect}, then $\alpha(\Omega)>0$ (see \cite{CarlesonTotik}, Theorem 1.7).  Moreover, for domains like $\Omega={\mathbb C}\backslash E$, where $E$ is a compact set in ${\mathbb R}$ (e.g. Cantor-type sets), the connection between the metric properties of $E$ and the precise value of $\alpha(\Omega)$ (especially the optimal case $\alpha(\Omega)=1/2$) was studied in detail in \cite{CarlesonTotik} and \cite{Totik}. In the appendix of this paper, we will provide more examples of higher-dimensional domains with positive hyperconvexity indices.  Probably, the Teichm\"uller space of a compact Riemann surface with genus $\ge 2$ which is boundedly embedded in ${\mathbb C}^{3g-3}$ has a positive hyperconvexity index.

 For a domain $\Omega\subset{\mathbb C}^n$, let $\varrho$ be the\/ {\it relative extremal function\/} of a (fixed) closed ball $\overline{B}\subset \Omega$, i.e.,
 $$
 \varrho(z):=\varrho_{\overline{B}}(z):=\sup\{u(z):u\in PSH^-(\Omega),\,u|_{\overline{B}}\le -1\},
 $$
 where $PSH^-(\Omega)$ denotes the set of negative psh functions on $\Omega$. It is known that $\varrho$ is continuous on $\overline{\Omega}$ if $\Omega$ is a bounded hyperconvex domain (cf. \cite{BlockiLecture}, Proposition 3.1.3/vii)). Furthermore, it is easy to show that if $\alpha(\Omega)>0$ then for every $0<\alpha<\alpha(\Omega)$ there exists a constant $C>0$ such that $-\varrho\le C \delta^\alpha$.

  The goal of this paper is to present some off-diagonal estimates of the Bergman kernel on domains with positive hyperconvexity indices, in terms of $\varrho$. Usually, off-diagonal behavior of the Bergman kernel is more sensitive about the geometry of a domain than on-diagonal behavior (compare \cite{BarrettWorm}).

Let $K_\Omega(z,w)$ be the Bergman kernel of $\Omega$. It is well-known that  $K_\Omega(\cdot,w)\in L^2(\Omega)$ for all $w\in \Omega$. Thus it is natural to ask the following

\begin{problem}
 For which\/ $\Omega$ and $p> 2$ does one have $K_\Omega(\cdot,w)\in L^p(\Omega)$ for all $w\in \Omega$?
\end{problem}

For the sake of convenience, we set
$$
\beta(\Omega)=\sup\left\{\beta\ge 2: K_\Omega(\cdot,w)\in L^\beta(\Omega),\, \forall\,w\in \Omega\right\}.
$$
We call it the\/ {\it integrability index\/} of the Bergman kernel. From the well-known works of Kerzman, Catlin and Bell, we know that $\beta(\Omega)=\infty$ if $\Omega$ is a bounded pseudoconvex domain of finite D'Angelo type. On the other hand, it is not difficult to see from Barrett's work \cite{BarrettWorm} that there exist unbounded Diederich-Fornaess worm domains with $\beta(\Omega)$ arbitrarily close to $2$ (see e.g. \cite{KrantzWorm}, Lemma 7.5). Thus it is meaningful to show the following

\begin{theorem}\label{th:Main}
 If\/ $\Omega\subset {\mathbb C}^n$ is pseudoconvex, then  $\beta(\Omega)\ge 2+\frac{2\alpha(\Omega)}{2n-\alpha(\Omega)}$. Furthermore, if\/ $\Omega$ is a bounded domain with $\alpha(\Omega)>0$, then for every\/ $0<\alpha<\alpha(\Omega)$ and \/$2\le p < 2+\frac{2\alpha(\Omega)}{2n-\alpha(\Omega)}$, there exists a constant $C>0$ such that
\begin{equation}\label{eq:BerezinKernel}
\int_\Omega |K_\Omega(\cdot,w)/\sqrt{K_\Omega(w)}|^{p}\le C |\mu(w)|^{-\frac{(p-2) n}\alpha},\ \ \ w\in \Omega,
\end{equation}
where $K_\Omega(w)=K_\Omega(w,w)$ and $\mu:=|\varrho|(1+|\log|\varrho||)^{-1}$.
  \end{theorem}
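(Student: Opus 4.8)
The plan is to estimate $\|K_\Omega(\cdot,w)\|_{L^p}$ by an interpolation-type argument combining the $L^2$-bound (which is automatic, $\|K_\Omega(\cdot,w)\|_{L^2}=\sqrt{K_\Omega(w)}$) with a weighted $L^2$-estimate obtained from H\"ormander's $\dbar$-technique on a pseudoconvex domain. The key point is that on a domain with $\alpha(\Omega)>0$ one has a bounded psh exhaustion with polynomial decay, hence (via $\varrho$) good weights. Concretely, I would first reduce everything to the following weighted estimate: for suitable weight $\psi$ comparable to a power of $-\varrho$ (or to $-\log(-\varrho)$ acting as a Green-type singular weight at $w$), one has
\begin{equation*}
\int_\Omega |K_\Omega(z,w)|^2 e^{-\psi(z)}\,d\lambda(z)\le C\,K_\Omega(w),
\end{equation*}
with $C$ depending on how singular $\psi$ is allowed to be near the boundary and at $w$. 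The reproducing property plus Cauchy--Schwarz, or better the extremal characterization $K_\Omega(w)=\sup\{|f(w)|^2:\|f\|_{L^2}\le 1\}$ together with an Ohsawa--Takegoshi-type extension/$L^2$-minimal-solution argument, should give such a bound; the function $\mu=|\varrho|(1+|\log|\varrho||)^{-1}$ enters precisely because the natural weight that is still integrable after multiplying by $|K_\Omega(z,w)|^2$ is of the form $-\log\mu$ up to the logarithmic correction, which is why $\mu(w)^{-(p-2)n/\alpha}$ is the right power on the right-hand side.

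Second, I would interpolate: write $p=2+s$ and split $|K_\Omega(\cdot,w)|^p=|K_\Omega(\cdot,w)|^2\cdot|K_\Omega(\cdot,w)|^s$, bound the second factor pointwise using the crude estimate $|K_\Omega(z,w)|\le C\,\delta(z)^{-n}K_\Omega(w)^{1/2}$ (which follows from the mean value inequality on a ball of radius $\sim\delta(z)$ about $z$ and $\|K_\Omega(\cdot,w)\|_{L^2}^2=K_\Omega(w)$), and then feed in the weighted $L^2$-estimate with the weight chosen as $\psi=\frac{sn}{\alpha}\log(1/\delta)$ or the cleaner choice $\psi\sim\frac{sn}{\alpha}\log(1/|\varrho|)$ using $-\varrho\le C\delta^\alpha$. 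The constraint $p<2+\frac{2\alpha(\Omega)}{2n-\alpha(\Omega)}$, i.e. $\frac{sn}{\alpha(\Omega)}<1-\frac{s}{2}$, is exactly the borderline at which the weight $e^{-\psi}$ with $\psi\sim\frac{sn}{\alpha}\log(1/|\varrho|)$ is still an admissible (locally integrable, bounded-below-enough) weight against which H\"ormander's estimate yields a uniform constant; that is where the logarithmic refinement $\mu$ rather than $|\varrho|$ itself must be used, and tracking it gives the exponent $-\frac{(p-2)n}{\alpha}$ on $|\mu(w)|$. For the unbounded-$\Omega$ part of the statement (the bound $\beta(\Omega)\ge 2+\frac{2\alpha(\Omega)}{2n-\alpha(\Omega)}$), one exhausts $\Omega$ by bounded pseudoconvex subdomains $\Omega_j\uparrow\Omega$, notes $\alpha(\Omega_j)\ge\alpha(\Omega)$ locally near a fixed $w$, applies the bounded case, and passes to the limit using Montel/monotone convergence for Bergman kernels.

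The main obstacle I expect is establishing the weighted $L^2$-bound uniformly in $w$ with the precise logarithmic weight, i.e. showing that $\int_\Omega|K_\Omega(z,w)|^2(-\varrho(z))^{-t}(1+|\log(-\varrho(z))|)^{t}\,d\lambda(z)\le C\,K_\Omega(w)$ for $t$ up to $1$ with a constant independent of $w$. This requires more than the raw reproducing property: one needs to realize $K_\Omega(\cdot,w)/\sqrt{K_\Omega(w)}$ as (close to) the minimal $L^2$-solution of a $\dbar$-problem and invoke a singular-weight $\dbar$-estimate à la Donnelly--Fefferman/Berndtsson, where the gradient term $|\dbar\psi|^2_{i\partial\dbar\psi}$ of the weight $\psi=-t\log(-\varrho)$ is controlled because $\varrho$ is maximal psh (so $i\partial\dbar(-\log(-\varrho))\ge i\partial(-\log(-\varrho))\wedge\bar\partial(-\log(-\varrho))$ off $\overline B$), the inequality degenerating exactly as $t\to1$ — which is why the $(1+|\log|\varrho||)$-correction is forced and why the endpoint $p=2+\frac{2\alpha(\Omega)}{2n-\alpha(\Omega)}$ is excluded. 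Once that uniform weighted bound is in hand, the rest is the Hölder/interpolation bookkeeping sketched above.
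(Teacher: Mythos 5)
Your overall strategy is in the right family as the paper's: a weighted $L^2$ bound for $K_\Omega(\cdot,w)$ obtained from a Berndtsson/Donnelly--Fefferman type $\dbar$-estimate, combined with a pointwise bound via the mean value inequality, then a H\"older/dyadic accounting. However, as written the argument has two substantive problems.

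First, the pointwise bound you use to split $|K_\Omega(\cdot,w)|^p = |K_\Omega(\cdot,w)|^2\cdot|K_\Omega(\cdot,w)|^{p-2}$ is too crude. The estimate $|K_\Omega(z,w)|\le C\,\delta(z)^{-n}K_\Omega(w)^{1/2}$ (from Cauchy--Schwarz and $K_\Omega(z)\le C_n\delta(z)^{-2n}$) forces you to integrate $|K_\Omega(\cdot,w)|^2\delta^{-(p-2)n}$, and with $\delta^{-(p-2)n}\lesssim|\varrho|^{-(p-2)n/\alpha}$ this requires $(p-2)n/\alpha<1$, i.e. $p<2+\alpha/n$ --- strictly smaller than the claimed threshold $2+\frac{2\alpha}{2n-\alpha}$. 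The missing idea is to \emph{improve} the pointwise bound before splitting: apply the mean value inequality on the ball $B(z,\delta(z))$ and then feed the $L^2$ boundary decay of $K_\Omega(\cdot,w)$ into that small ball, yielding
$|K_\Omega(z,w)|^2\le C_n\,\delta(z)^{-2n}\int_{\delta\le 2\delta(z)}|K_\Omega(\cdot,w)|^2\le C\,K_\Omega(w)\,\mu(w)^{-r}\,\delta(z)^{r\alpha-2n}$.
The extra $\delta(z)^{r\alpha}$ gain (effectively replacing $2n$ by $2n-r\alpha$ in the exponent) is exactly what produces the range $p<2+\frac{2r\alpha}{2n-r\alpha}\to 2+\frac{2\alpha(\Omega)}{2n-\alpha(\Omega)}$ after summing over dyadic shells $\{2^{-k-1}<\delta\le 2^{-k}\}$. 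Without this refinement your H\"older step cannot reach the stated endpoint.

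Second, your proposed mechanism for making the constant uniform in $w$ and for the appearance of $\mu(w)=|\varrho(w)|(1+|\log|\varrho(w)||)^{-1}$ is not right. The logarithm is \emph{not} forced by degeneration of the Donnelly--Fefferman/Berndtsson estimate as the exponent $t\to1$; Proposition~\ref{prop:BergmanIntegral} in fact bootstraps the exponent all the way up to any $r<1$ by iteration. What actually produces both the uniformity in $w$ and the logarithmic correction is a Green function argument: one shows (Proposition~\ref{prop:GreenLowerEstimate}, via the quasi-H\"older continuity of $\varrho$ from Lemma~\ref{lm:Green_Holder2}) that the sublevel set $A_w=\{g_\Omega(\cdot,w)<-1\}$ satisfies $A_w\subset\{\varrho<-C^{-1}\mu(w)\}$, and then invokes the known Bergman kernel comparison $K_{A_w}(w)\le C_n K_\Omega(w)$ (Herbort/Chen). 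Feeding $a=a(w)\sim\mu(w)$ into the $L^2$ decay estimate of Proposition~\ref{prop:BergmanIntegral} replaces the $w$-dependent $K_{\Omega_a}(w)$ by $C_n K_\Omega(w)$ and produces the $\mu(w)^{-r}$ factor. This step has no analogue in your sketch and is genuinely needed; without it the weighted $L^2$ bound you set out to prove does not have a constant independent of $w$.

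Minor point: your treatment of the unbounded case (exhaustion by $\Omega_j\uparrow\Omega$ plus Fatou/Montel) matches the paper's.
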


       The lower bound for $\beta(\Omega)$ can be improved substantially when $n=1$:

  \begin{theorem}\label{th:OneDimension}
  If\/ $\Omega$ is a domain in ${\mathbb C}$, then $\beta(\Omega)\ge 2+\frac{\alpha(\Omega)}{1-\alpha(\Omega)}$.
\end{theorem}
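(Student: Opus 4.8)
The plan is to establish, for every $w\in\Omega$ and every $p<2+\alpha/(1-\alpha)$ with $\alpha<\alpha(\Omega)$, that $K_\Omega(\cdot,w)\in L^p(\Omega)$, and then let $\alpha\uparrow\alpha(\Omega)$. I would first do the transparent case where $\Omega\subsetneq\mathbb{C}$ is bounded and simply connected: fixing a conformal map $\psi\colon\mathbb{D}\to\Omega$ with $\psi(0)=w$ and putting $\phi=\psi^{-1}$, the identity $K_\Omega(z,w)=\pi^{-1}\phi'(z)\overline{\phi'(w)}$ together with the substitution $z=\psi(\zeta)$ gives $\int_\Omega|K_\Omega(\cdot,w)|^p=\pi^{-p}|\phi'(w)|^p\int_{\mathbb{D}}|\psi'|^{2-p}\,dV$. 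The relative extremal function $\varrho$ of the fixed ball $\overline B$ pulls back to the relative extremal function of the compactum $\phi(\overline B)\subset\mathbb{D}$, which is comparable to $1-|\zeta|$ near $\partial\mathbb{D}$; together with Koebe's theorem $\delta_\Omega(\psi(\zeta))\asymp|\psi'(\zeta)|(1-|\zeta|)$ and the bound $-\varrho\le C\delta^\alpha$, this forces $|\psi'(\zeta)|\gtrsim(1-|\zeta|)^{(1-\alpha)/\alpha}$ near $\partial\mathbb{D}$, whence $\int_{\mathbb{D}}|\psi'|^{2-p}\,dV<\infty$ precisely when $(p-2)(1-\alpha)/\alpha<1$, i.e.\ $p<2+\alpha/(1-\alpha)$.

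For a general bounded domain $\Omega$ with $\alpha(\Omega)>0$ there is no global uniformizer, but the Green function $g_\Omega(\cdot,w)$ takes its place. Fixing $w$, I would set $h:=\partial_{\bar w}g_\Omega(\cdot,w)$, which is harmonic on $\Omega\setminus\{w\}$ and vanishes on $\partial\Omega$, and use that $K_\Omega(z,w)=\tfrac{2}{\pi}\partial_z h(z)$ for $z\ne w$. Since $\varrho$ is plurisubharmonic while $g_\Omega(\cdot,w)$ is harmonic, comparing them on $\Omega\setminus B(w,\varepsilon)$ via the maximum principle yields $-g_\Omega(\cdot,w)\asymp_w-\varrho$ and $|h|\le C_w(-\varrho)$ there; with $-\varrho\le C\delta^\alpha$ this gives $|h|\le C_w\delta^\alpha$ near $\partial\Omega$, and feeding this into the interior gradient estimate for harmonic functions produces the pointwise bound $|K_\Omega(z,w)|\lesssim_w\delta(z)^{\alpha-1}$ near $\partial\Omega$.

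The key analytic input is the weighted $L^2$ estimate
\[
\int_\Omega(-\varrho)^{-s}\,|K_\Omega(z,w)|^2\,dV<\infty\qquad(0<s<1),
\]
which I would prove by writing the integrand as $\tfrac{4}{\pi^2}(-\varrho)^{-s}|\partial_z h|^2$ and integrating by parts in $z$: since $h$ is harmonic with $h|_{\partial\Omega}=0$ the boundary term vanishes, and Cauchy--Schwarz bounds the result by $s^2\int_\Omega(-\varrho)^{-s-2}|\nabla\varrho|^2|h|^2\,dV$, which by $|h|\le C_w(-\varrho)$ is $\lesssim_w\int_\Omega(-\varrho)^{-s}|\nabla\varrho|^2\,dV$. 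Because $\varrho$ is harmonic off $\overline B$, a second integration by parts — using $(-\varrho)^{-s}|\nabla\varrho|^2=\operatorname{div}(\Phi(\varrho)\nabla\varrho)$ with $\Phi(\varrho)=-(-\varrho)^{1-s}/(1-s)$ and $\Phi(0)=0$ — reduces $\int_{\{-\varrho<\eta_0\}}(-\varrho)^{-s}|\nabla\varrho|^2\,dV$ to a finite flux through $\{-\varrho=\eta_0\}$ plus a term through $\{-\varrho=\eta\}$ of size $O(\eta^{1-s})\to0$ (the flux $\int_{\{-\varrho=\eta\}}|\nabla\varrho|$ being independent of $\eta$). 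Granting this, I would split $\int_\Omega|K_\Omega(\cdot,w)|^p=\int_\Omega|K_\Omega(z,w)|^{p-2}|K_\Omega(z,w)|^2\,dV$ into the part over $\{\delta\ge\varepsilon_0\}$ (finite, since $K_\Omega(\cdot,w)$ is bounded there) and the part over $\{\delta<\varepsilon_0\}$, on which the pointwise bound gives $|K_\Omega(z,w)|^{p-2}\lesssim_w\delta^{(\alpha-1)(p-2)}\le c\,(-\varrho)^{-(1-\alpha)(p-2)/\alpha}$; as $p<2+\alpha/(1-\alpha)$ forces $(1-\alpha)(p-2)/\alpha<1$, the weighted estimate finishes the argument. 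The possibly unbounded case of the theorem reduces to the bounded one by a routine exhaustion argument.

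The step I expect to be genuinely delicate is the rigorous justification of the two integrations by parts, because when $\partial\Omega$ is very irregular $\varrho$ and $h$ need not extend smoothly to it. The remedy is to carry out each computation on the relatively compact sublevel sets $\{-\varrho>\eta\}$, which by Sard's theorem have smooth boundary for almost every $\eta$ and on which $\varrho$ and $h$ are smooth, and then let $\eta\to0$, using the flux identity and the bound $|h|\le C_w(-\varrho)$ to see that the boundary contributions over $\{-\varrho=\eta\}$ tend to $0$.
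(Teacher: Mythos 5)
Your argument is correct in substance and arrives at the same sharp exponent, but by a genuinely different route from the paper.  The paper's proof of this theorem is extremely short: it simply invokes Lemma~3.2, which gives the pointwise bound $|K_\Omega(z,w)|\le C\delta(z)^{\alpha-1}$ for $w$ in a compact set (derived from Schiffer's formula $K_\Omega=\frac{2}{\pi}\partial_\xi\partial_{\bar\zeta}g_\Omega$ plus a Poisson-kernel argument), and then plugs it into the dyadic summation scheme from the proof of Theorem~1.2, which in turn rests on Proposition~2.1 — the $L^2$ boundary-decay estimate $\int_{\{-\rho\le\varepsilon\}}|K_\Omega(\cdot,w)|^2\le C_r K_{\Omega_a}(w)(\varepsilon/a)^r$ proved via Berndtsson's weighted $\bar\partial$-estimate.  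You obtain the same pointwise bound $|K_\Omega(z,w)|\lesssim_w\delta(z)^{\alpha-1}$ (by interior gradient estimates for the harmonic function $h=\partial_{\bar w}g_\Omega(\cdot,w)$, together with $|h|\le C_w(-\varrho)$ from the maximum principle — essentially the same input as Lemma~3.2), but you replace the $\bar\partial$-machinery of Proposition~2.1 with a direct proof of the weighted estimate $\int_\Omega(-\varrho)^{-s}|K_\Omega(\cdot,w)|^2<\infty$ ($s<1$) by integrating Schiffer's identity by parts twice, exploiting the harmonicity of $h$ and of $\varrho$ off $\overline B$ and the flux identity on level sets.  This is a real tradeoff: the paper's route is dimension-free (Proposition~2.1 holds for any $n$, and the one-dimensional improvement enters only through the sharper pointwise bound), whereas your argument is entirely classical potential theory and avoids $L^2$-$\bar\partial$ theory altogether, but is specific to $n=1$.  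Note also that the weighted estimate you prove is equivalent to the content of the remark after Proposition~2.1 (the Berndtsson--Charpentier estimate applied to $\chi_{\Omega_a}K_{\Omega_a}(\cdot,w)$), so in effect you are supplying an elementary one-dimensional proof of that estimate.

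The one place that deserves a more careful look — and you rightly flag it — is the vanishing of the boundary terms in the two integrations by parts.  The second one is unproblematic: the flux $\oint_{\{-\varrho=\eta\}}\partial_n\varrho$ is constant in $\eta$ (equal to the Riesz mass of $\varrho$ on $\overline B$), so the boundary contribution is $O(\eta^{1-s})\to0$.  The first one is more delicate: the boundary integral over $\{-\varrho=\eta\}$ is of the form $\oint_{\{-\varrho=\eta\}}(-\varrho)^{-s}h\,\bar f\,d\bar z$ with $f=\frac{\pi}{2}K_\Omega(\cdot,w)$, and the bound $|h|\le C_w\eta$ gives $\eta^{1-s}\oint_{\{-\varrho=\eta\}}|f|\,|dz|$, which requires some control on $\oint_{\{-\varrho=\eta\}}|f|\,|dz|$ for a suitable sequence $\eta\to0$.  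This can be extracted, e.g., from the coarea formula and the already established finiteness of $\int(-\varrho)^{-s'}|f|^2$ for $s'<s$ by a bootstrap, or more simply by first running the argument on an exhausting sequence of smoothly bounded subdomains and passing to the limit; but as written the claim that the boundary term ``vanishes since $h|_{\partial\Omega}=0$'' is not a complete justification, and is precisely the place where the $\bar\partial$-proof of Proposition~2.1 (or the Berndtsson--Charpentier reference) sidesteps the issue.  Your initial warm-up via the conformal map and Koebe's theorem for the simply connected case is correct and a good sanity check, though it is subsumed by the general argument.
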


In particular, we obtain the known fact that if $\Omega\subsetneq {\mathbb C}$ is a simply-connected domain then $\beta(\Omega)\ge 3$.
A famous conjecture of Brennan \cite{Brennan} suggests that the bound may be improved to $\beta(\Omega)\ge 4$; an equivalent statement is that if $f:\Omega\rightarrow {\mathbb D}$ is a conformal mapping where ${\mathbb D}$ is the unit disc, then  $f'\in L^p(\Omega)$ for all $p<4$. There is an extensive study on this conjecture (see  \cite{Bertilsson}, \cite{CarlesonJones}, \cite{CarlesonMakarov}, \cite{PommerenkeBook}, etc.).

Nevertheless, Theorem \ref{th:OneDimension} is best possible in view of the following

\begin{proposition}\label{prop:Planar}
Let $E\subset {\mathbb C}$ be a compact set satisfying ${\rm Cap}(E)>0$ and ${\rm dim}_{\rm H}(E)< 1$, where  ${\rm Cap}$ and ${\rm dim}_{\rm H}$ denote the logarithmic capacity and the Hausdorff dimension respectively. Set $\Omega:={\mathbb C}\backslash E$. Then
  $\beta(\Omega)\le 2+\frac{{\rm dim}_{\rm H}(E)}{1-{\rm dim}_{\rm H}(E)}$.
\end{proposition}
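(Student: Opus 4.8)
The plan is to reduce the inequality to a removability statement for $L^p$ holomorphic functions, the point being that the Bergman kernel is itself a non-extendable element of the Bergman space. Since ${\rm Cap}(E)>0$, the space $A^2(\Omega)$ is nontrivial: if $\mu_1\neq\mu_2$ are probability measures on $E$ of finite logarithmic energy, then the difference of their Cauchy transforms is a nonzero element of $A^2(\Omega)$ — it is holomorphic on $\Omega$, square-integrable near $E$ by the finite-energy property, and $O(|z|^{-2})$ near $\infty$ because $\mu_1-\mu_2$ has total mass zero. Hence $K_\Omega(w_0)>0$ for some $w_0\in\Omega$, so $f:=K_\Omega(\cdot,w_0)$ is a nonzero element of $A^2(\Omega)$; its Laurent expansion at $\infty$, together with $f\in L^2$, forces $f(z)=O(|z|^{-2})$ near $\infty$, and therefore $f$ cannot be continued holomorphically across $E$, since such a continuation would be an entire function that is $O(|z|^{-2})$, hence $\equiv 0$ by Liouville's theorem. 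Consequently, if $E$ is removable for $L^p$ holomorphic functions then $f\notin L^p(\Omega)$ — membership being equivalent to $L^p$-integrability near $E$, as $f=O(|z|^{-2})$ near $\infty$ — and hence $\beta(\Omega)\le p$.

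Everything therefore reduces to the following removability lemma. \emph{Let $E\subset\C$ be compact, $p>2$, and suppose $\mathcal{H}^{s}(E)=0$ for some $s$ with $0<s<\frac{p-2}{p-1}$. Then every function holomorphic on $\C\setminus E$ that is $L^p$ on $U\setminus E$ for some open $U\supset E$ extends holomorphically across $E$.} Granting this, set $d:={\rm dim}_{\rm H}(E)$ (so $0\le d<1$) and fix any $p>2+\frac{d}{1-d}$. Since $p>2+\frac{d}{1-d}$ is equivalent to $\frac{p-2}{p-1}>d$, we may choose $s\in\big(d,\frac{p-2}{p-1}\big)$, and then $\mathcal{H}^{s}(E)=0$ because $s>{\rm dim}_{\rm H}(E)$. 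The lemma shows that $E$ is removable for $L^p$ holomorphic functions, so $\beta(\Omega)\le p$ by the previous paragraph; letting $p$ decrease to $2+\frac{d}{1-d}$ gives $\beta(\Omega)\le 2+\frac{d}{1-d}$.

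To prove the lemma it suffices, given $\phi\in C_c^\infty(U)$, to show $\int f\,\dbar\phi\,dA=0$; for then $\dbar f=0$ across $E$ in the distributional sense and $f$ extends by Weyl's lemma. Fix $\eta\in(0,1)$ and choose — after a standard covering argument to arrange bounded overlap — a finite cover $E\subset\bigcup_i B(x_i,r_i)$ with $\sum_i r_i^{s}<\eta$; then automatically $r_i<1$, and for $\eta$ small these discs lie in $U$. Let $\psi_\eta$ be a smooth cutoff equal to $0$ on $\bigcup_i B(x_i,r_i)$, equal to $1$ off $\bigcup_i B(x_i,2r_i)$, with $|\dbar\psi_\eta|\lesssim\sum_i r_i^{-1}\mathbf{1}_{B(x_i,2r_i)}$. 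From the decomposition
\[
\int f\,\dbar\phi\,dA=\int f\,\dbar(\phi\psi_\eta)\,dA+\int f(1-\psi_\eta)\,\dbar\phi\,dA-\int f\phi\,\dbar\psi_\eta\,dA,
\]
the first integral is $0$ because $\phi\psi_\eta$ is compactly supported in $\C\setminus E$, where $f$ is holomorphic; the second tends to $0$ as $\eta\to 0$ because $f\,\dbar\phi\in L^1$ and $\bigcup_i B(x_i,2r_i)$ has area $\lesssim\sum_i r_i^2\le\sum_i r_i^{s}<\eta$. For the third, Hölder on each disc gives $\int_{B(x_i,2r_i)}|f|\lesssim r_i^{2/q}\|f\|_{L^p(B(x_i,2r_i))}$ with $q:=p/(p-1)$, and Hölder in $i$ with exponents $q$ and $p$ then yields
\[
\Big|\int f\phi\,\dbar\psi_\eta\,dA\Big|\lesssim\|\phi\|_\infty\sum_i r_i^{\,2/q-1}\|f\|_{L^p(B(x_i,2r_i))}\lesssim\|\phi\|_\infty\Big(\sum_i r_i^{\,2-q}\Big)^{1/q}\|f\|_{L^p(U)}.
\]
Since $2-q=\frac{p-2}{p-1}>s$ and each $r_i<1$, we have $\sum_i r_i^{\,2-q}\le\sum_i r_i^{s}<\eta$, so the third integral is $\lesssim\eta^{1/q}\to 0$. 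Hence $\int f\,\dbar\phi\,dA=0$, which proves the lemma.

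The only genuinely delicate point is the exponent arithmetic in the last display: everything is forced by the identity $2-q=\frac{p-2}{p-1}$, which is exactly what couples the $L^p$-integrability threshold to the Hausdorff exponent and produces the sharp constant $2+\frac{d}{1-d}$. The auxiliary facts — nontriviality of $A^2(\Omega)$ and the $O(|z|^{-2})$ decay of Bergman functions at $\infty$ — are routine, and the removability lemma is of a classical type.
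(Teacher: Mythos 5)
Your proof is correct, and it takes a genuinely different route from the paper's. The paper dispatches this proposition in a few lines by invoking Carleson's theorem (Theorem~1 of \S\,6 of \emph{Selected Problems on Exceptional Sets}, quoted in the text as Theorem~\ref{th:Carleson}): part~(1) gives $A^2(\Omega)\neq\{0\}$ from ${\rm Cap}(E)>0$, part~(2) gives $A^p(\Omega)=\{0\}$ once $\Lambda_{2-q}(E)<\infty$, and the exponent arithmetic $2-q=\frac{p-2}{p-1}>\dim_{\rm H}(E)$ produces an immediate contradiction. You instead reprove both ingredients from scratch: nontriviality of $A^2(\Omega)$ via the Cauchy transform of the difference of two finite-energy probability measures on $E$, and triviality of $A^p$ near $E$ via a hands-on removability lemma proved with a covering by small discs, cutoffs with $|\dbar\psi_\eta|\lesssim r_i^{-1}$, and a two-layer H\"older estimate that isolates exactly the exponent $2-q$. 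The payoff is a self-contained, elementary proof; the cost is length, and a marginally stronger hypothesis in the removability lemma ($\mathcal{H}^s(E)=0$ for some $s<2-q$, rather than Carleson's $\mathcal{H}^{2-q}(E)<\infty$), which is however harmless here since $\dim_{\rm H}(E)<2-q$ forces $\mathcal{H}^{2-q}(E)=0$ anyway.

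Two small points worth making explicit. First, the existence of two \emph{distinct} finite-energy probability measures on $E$ uses that $E$ is not a single point; this is automatic since singletons have zero capacity, but it deserves a word. Second, the local $L^2$-membership of the Cauchy transform of a compactly supported, charge-zero signed measure of finite logarithmic energy is a standard potential-theoretic identity (the Dirichlet energy of the logarithmic potential equals, up to a constant, the logarithmic energy of the measure), but it is the one genuinely nontrivial external input in your nontriviality step, so a reference would be in order. Finally, the step from ``$K_\Omega(\cdot,w_0)\notin L^p$ near $E$'' to ``$\beta(\Omega)\le p$'' implicitly uses that $L^{p'}$ on a bounded neighborhood of $E$ implies $L^p$ there for $p'>p$ (so the set of good exponents is downward closed); you do flag the $O(|z|^{-2})$ decay at $\infty$ that makes this reduction to a bounded neighborhood legitimate, and it is fine.
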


\begin{example}[1.1]
There exists a Cantor-type set $E$ with ${\rm dim}_{\rm H}(E)=0$ and ${\rm Cap}(E)>0$ $($cf. \cite{Carleson}, \S\,4, Theorem 5$)$. Thus $\beta({\mathbb C}\backslash E)=2$ in view of Proposition \ref{prop:Planar}.
\end{example}

\begin{example}[1.2]
Andrievskii \cite{Andrievskii} constructed a compact set $E\subset {\mathbb R}$ with ${\rm dim}_{\rm H}(E)=1/2$ and $\alpha({\mathbb C}\backslash E)= 1/2$. It follows from Theorem \ref{th:OneDimension} and Proposition \ref{prop:Planar} that $\beta({\mathbb C}\backslash E)=3$.
\end{example}

\begin{problem}
Is there a bounded domain $\Omega\subset {\mathbb C}$ with $\beta(\Omega)=2$?
\end{problem}

The above theorems shed some light on the study of the Bergman space
$$
A^p(\Omega)=\left\{f\in {\mathcal O}(\Omega):\int_\Omega |f|^p<\infty\right\}
$$
for domains with positive hyperconvexity indices.
For instance, we can show that
   $A^p(\Omega)\cap A^2(\Omega)$ lies dense in $A^2(\Omega)$ for suitable $p>2$ and the reproducing property of $K_\Omega(z,w)$ holds in $A^p(\Omega)$ for suitable $p<2$ (see \S\,4). A related problem is to study whether the Bergman projection can be extended to a bounded projection $L^p(\Omega)\rightarrow A^p(\Omega)$ for all $p$ in some nonempty open interval around $2$. For flat Hartogs triangles, a complete answer was recently given by Edholm-McNeal \cite{EdholmMcNeal}. For more information on this matter, we refer the reader to Lanzani's review article \cite{Lanzani} and the references therein.

   Set
 $$
 K_{\Omega,p}(z):=\sup\{|f(z)|: f\in A^p(\Omega),\|f\|_{L^p(\Omega)}\le 1\}.
 $$
Using $f:=\frac{K_\Omega(\cdot,z)}{\sqrt{K_\Omega(z)}}/\|\frac{K_\Omega(\cdot,z)}{\sqrt{K_\Omega(z)}}\|_{L^p(\Omega)}$ as a candidate, we conclude from estimate (\ref{eq:BerezinKernel}) that

   \begin{corollary}\label{coro:LeviProblem}
   Let\/ $\Omega\subset {\mathbb C}^n$ be a bounded domain with $\alpha(\Omega)>0$. For every  $p<2+\frac{2\alpha(\Omega)}{2n-\alpha(\Omega)}$, one has
   $$
    K_{\Omega,p}(z) \ge C_{\alpha,p}\, \sqrt{K_\Omega(z)}\,|\mu(z)|^{\frac{(p-2)n}{p\alpha}}.
   $$
     \end{corollary}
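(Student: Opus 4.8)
The plan is to use the trial function $f$ already singled out before the statement and to read the estimate off directly from Theorem~\ref{th:Main}; Corollary~\ref{coro:LeviProblem} is essentially just the dual reformulation of the bound (\ref{eq:BerezinKernel}). Fix $w\in\Omega$, a parameter $\alpha$ with $0<\alpha<\alpha(\Omega)$, and an exponent $p$ with $2\le p<2+\frac{2\alpha(\Omega)}{2n-\alpha(\Omega)}$, which is the range in which (\ref{eq:BerezinKernel}) is available. First I would introduce the normalized Bergman section
\[
 g_w:=\frac{K_\Omega(\cdot,w)}{\sqrt{K_\Omega(w)}}\in{\mathcal O}(\Omega),
\]
which is meaningful since $K_\Omega(w)=K_\Omega(w,w)>0$ for a bounded domain. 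By the on-diagonal estimate (\ref{eq:BerezinKernel}) of Theorem~\ref{th:Main} we know $g_w\in A^p(\Omega)$, with
\[
 \|g_w\|_{L^p(\Omega)}^p=\int_\Omega|K_\Omega(\cdot,w)/\sqrt{K_\Omega(w)}|^p\le C\,|\mu(w)|^{-\frac{(p-2)n}{\alpha}},
\]
where $C=C(\alpha,p)>0$ is independent of $w$.

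The next step is to normalize and test. Since $g_w(w)=K_\Omega(w,w)/\sqrt{K_\Omega(w)}=\sqrt{K_\Omega(w)}>0$, the function $g_w$ is not identically $0$, hence $0<\|g_w\|_{L^p(\Omega)}<\infty$ and $f:=g_w/\|g_w\|_{L^p(\Omega)}$ is a well-defined element of $A^p(\Omega)$ with $\|f\|_{L^p(\Omega)}=1$; in particular it is admissible in the extremal problem defining $K_{\Omega,p}(w)$. Evaluating at $w$ and invoking the $L^p$ bound above then gives
\[
 K_{\Omega,p}(w)\ge|f(w)|=\frac{|g_w(w)|}{\|g_w\|_{L^p(\Omega)}}=\frac{\sqrt{K_\Omega(w)}}{\|g_w\|_{L^p(\Omega)}}\ge C^{-1/p}\,\sqrt{K_\Omega(w)}\,|\mu(w)|^{\frac{(p-2)n}{p\alpha}},
\]
where the final inequality comes from taking $p$-th roots in the estimate for $\|g_w\|_{L^p(\Omega)}^p$. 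Setting $C_{\alpha,p}:=C(\alpha,p)^{-1/p}$ completes the argument; at $p=2$ the exponent $\frac{(p-2)n}{p\alpha}$ vanishes and one recovers the classical identity $K_{\Omega,2}(w)=\sqrt{K_\Omega(w)}$, so the bound is sharp in that case.

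There is no real obstacle here, since all the analytic work has already been done in Theorem~\ref{th:Main}: the corollary follows simply by testing the point-evaluation functional on $A^p(\Omega)$ against $g_w$. The only points that genuinely require a word of justification are the admissibility of $f$ — namely finiteness of $\|g_w\|_{L^p(\Omega)}$, which is exactly (\ref{eq:BerezinKernel}), together with $g_w\not\equiv0$, which makes the normalization legitimate — and the routine passage from the $L^p$-norm to its $p$-th root, which turns the exponent $-\frac{(p-2)n}{\alpha}$ in (\ref{eq:BerezinKernel}) into $\frac{(p-2)n}{p\alpha}$ in the conclusion.
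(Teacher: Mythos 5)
Your argument is correct and is exactly the one the paper indicates: the text immediately preceding Corollary~\ref{coro:LeviProblem} says to use the normalized reproducing kernel $K_\Omega(\cdot,z)/\sqrt{K_\Omega(z)}$, divided by its $L^p$ norm, as the test function in the extremal problem defining $K_{\Omega,p}(z)$, and to read the bound off from (\ref{eq:BerezinKernel}). Your write-up just fills in the routine details (non-vanishing of $g_w$, taking $p$-th roots), so it matches the paper's intended proof.
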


     \begin{remark}
      If\/ $\Omega$ is a bounded pseudoconvex domain with $C^2-$boundary, then $K_\Omega(z)\ge C\delta(z)^{-2}$ in view of the Ohsawa-Takegoshi extension theorem \cite{OhsawaTakegoshi87}. On the other hand, Hopf's lemma implies $|\varrho|\ge C\delta$. Thus
      $$
    K_{\Omega,p}(z) \ge C_{\alpha,p}\, \delta(z)^{-(1-\frac{(p-2)n}{p\alpha})}|\log \delta(z)|^{-\frac{(p-2)n}{p\alpha}}
   $$
   as $z\rightarrow \partial \Omega$.
   Notice also that
 $
 \frac{(p-2)n}{p\alpha}< \frac12
 \iff p< 2+\frac{2\alpha(\Omega)}{2n-\alpha(\Omega)}$.
     \end{remark}

     We would like to mention an interesting connection between Problem 1 and regularity problem of biholomorphic maps. The starting point is the following result of Lempert:

     \begin{theorem}[cf. \cite{Lempert}, Theorem 6.2]\label{th:Lempert}
      Let\/ $\Omega_1\subset {\mathbb C}^n$ be a bounded domain with $C^2-$boundary such that its Bergman projection $P_{\Omega_1}$ maps $C^\infty_0(\Omega_1)$ into $L^p(\Omega_1)$ for some $p>2$. Let\/ $\Omega_2\subset {\mathbb C}^n$ be a bounded domain with real-analytic boundary. Then any biholomorphic map $F:\Omega_1\rightarrow \Omega_2$ extends to a H\"older continuous map\/ $\overline{\Omega}_1\rightarrow \overline{\Omega}_2$.
     \end{theorem}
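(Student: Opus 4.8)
The plan is to run the Bell--Ligocka argument, the one new point being how the $L^p$-hypothesis on $P_{\Omega_1}$ takes over the role usually played by Condition R for $\Omega_1$. Write $u:=\det F'\in\mathcal O(\Omega_1)$. Since $F$ pulls the Lebesgue measure of $\Omega_2$ back to $|u|^2$ times that of $\Omega_1$, the reproducing property of the Bergman kernel gives the transformation rule $P_{\Omega_1}\big((g\circ F)\,u\big)=\big((P_{\Omega_2}g)\circ F\big)\,u$ for every $g\in C^\infty_0(\Omega_2)$. As $F$ is biholomorphic, $(g\circ F)\,u$ again lies in $C^\infty_0(\Omega_1)$, so the hypothesis puts the right-hand side in $A^p(\Omega_1)$ for every such $g$. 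Choosing $g$ with $P_{\Omega_2}g\equiv1$ (such $g$ exists in $C^\infty_0(\Omega_2)$ by the Bell--Ligocka lemma, $\partial\Omega_2$ being at least $C^2$) yields $u\in A^p(\Omega_1)$, hence $|u(z)|\le C\,\delta(z)^{-2n/p}$ from the sub-mean-value inequality over the interior balls of radius $\asymp\delta(z)$ available along the $C^2$ boundary of $\Omega_1$.

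If $n=1$ this finishes the proof: $F'=u$, so $|F'(z)|\le C\,\delta(z)^{-2/p}$ with $2/p<1$, and integrating $|F'|$ along boundary-approaching arcs in $\Omega_1$ (quasiconvex near its $C^2$ boundary) gives $|F(z)-F(z')|\le C\,|z-z'|^{\,1-2/p}$, so $F$ extends H\"older continuously to $\overline\Omega_1$. For $n\ge2$ one first writes $F'(z)=u(z)\,\mathrm{adj}\big(F'(z)^{-1}\big)$ with $F'(z)^{-1}=(F^{-1})'(F(z))$, estimates the latter by Cauchy's inequality for the bounded map $F^{-1}$, $\|(F^{-1})'(F(z))\|\le C/\mathrm{dist}(F(z),\partial\Omega_2)$, and combines this with a Carath\'eodory/Hopf comparison $\mathrm{dist}(F(z),\partial\Omega_2)\ge c\,\delta(z)^{\theta}$ (valid for $C^2$ boundaries, $\theta\ge1$) to get polynomial growth $\|F'(z)\|\le C\,\delta(z)^{-N}$, $N=2n/p+(n-1)\theta$.

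The main obstacle --- and the point at which the real-analyticity of $\partial\Omega_2$ must enter --- is to improve this to a \emph{strictly sub-critical} rate $\|F'(z)\|\le C\,\delta(z)^{-1+\varepsilon}$ for some $\varepsilon>0$, after which the path-integration of the previous paragraph again produces a H\"older extension. Here I would use that $\Omega_2$, having real-analytic boundary, is of finite D'Angelo type (Diederich--Fornaess), so it satisfies Condition R (Catlin) and carries sharp (an)isotropic Bergman-kernel estimates; feeding these, together with the kernel transformation law $K_{\Omega_1}(z,w)=u(z)\overline{u(w)}\,K_{\Omega_2}(F(z),F(w))$ and the $L^p$-information, back into the crude bound on $\|F'\|$ and the polynomial comparison between $\delta(z)$ and $\mathrm{dist}(F(z),\partial\Omega_2)$, one should be able to iterate, trading at each stage a fixed fraction of the gain $p>2$ for a reduction of the growth exponent, until it falls below $1$. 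The hard part will be making this iteration converge with nothing more than $p>2$ at one's disposal and uniformly in $n$; the rest --- the transformation formulas, the Bell--Ligocka lemma, Condition R for $\Omega_2$, the local estimates coming from the $C^2$ boundary of $\Omega_1$, and the final integration --- is routine.
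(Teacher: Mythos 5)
The paper does not actually prove this statement: it is quoted, with the attribution ``cf.\ \cite{Lempert}, Theorem 6.2,'' and is used as a black box to obtain Corollary \ref{coro:biholomorphic}. So there is no in-paper proof to compare against, and your proposal must be judged on its own merits.

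The opening move (Bell's transformation law $P_{\Omega_1}\bigl((g\circ F)u\bigr)=\bigl((P_{\Omega_2}g)\circ F\bigr)u$, then a sub-mean-value bound on $u=\det F'$) is the right framework, but there is a genuine gap already at the step where you claim $u\in A^p(\Omega_1)$. The Bell--Ligocka operator produces $g$ which vanishes to a prescribed \emph{finite order} on $\partial\Omega_2$, not a \emph{compactly supported} $g$; it does not lie in $C^\infty_0(\Omega_2)$. To remain within the hypothesis (which only controls $P_{\Omega_1}$ on $C^\infty_0(\Omega_1)$), you must take $g\in C_0^\infty(\Omega_2)$, and then $h:=P_{\Omega_2}g$ is some element of $A^2(\Omega_2)$ about whose boundary behavior you know nothing --- in particular, $h$ may decay or vanish near $\partial\Omega_2$, so $(h\circ F)u\in L^p(\Omega_1)$ does not yield $u\in A^p(\Omega_1)$. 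Making $h$ bounded below near $\partial\Omega_2$ is essentially a Condition~R statement for $\Omega_2$; you cannot take it for free. Alternatively, to use the high-order-vanishing $g$ you must already know that $(g\circ F)u$ extends smoothly by zero, which requires a priori polynomial bounds on $F$ and its derivatives near $\partial\Omega_1$ --- exactly the conclusion you are trying to reach. This circularity is the crux of the Bell bootstrap and is not resolved in the sketch.

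Two further gaps are more serious because they contradict the hypotheses. First, the ``Carath\'eodory/Hopf comparison'' $\mathrm{dist}(F(z),\partial\Omega_2)\ge c\,\delta_1(z)^{\theta}$ is obtained (in the Diederich--Fornaess style) by pulling back a bounded \emph{plurisubharmonic} exhaustion of $\Omega_2$ and applying Hopf's lemma; without pseudoconvexity of $\Omega_2$ no such exhaustion exists, and the theorem does not assume $\Omega_2$ is pseudoconvex. Real-analyticity must substitute here (Lempert uses Lojasiewicz-type inequalities for a real-analytic defining function), but the proposal does not address this. Second, the proposed bootstrap invokes ``finite D'Angelo type (Diederich--Fornaess), so it satisfies Condition~R (Catlin)'': Catlin's theorem is specifically for pseudoconvex finite-type domains, again not available here. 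Indeed, the paper explicitly points out (after Corollary \ref{coro:biholomorphic}) that Barrett's non-pseudoconvex smooth domain destroys the relevant $L^p$ behavior of the Bergman projection, and that the analogous question for non-pseudoconvex real-analytic domains is open. Finally, as you acknowledge, the actual iteration that would reduce the growth exponent of $\|F'\|$ below $1$ is only described, not carried out, and it is precisely this step that carries the difficulty of the theorem. In sum, the overall strategy is reasonable, but as written the argument has a circularity in the very first step, relies on tools (psh exhaustion, Catlin's Condition~R) that require pseudoconvexity not granted by the hypotheses, and leaves the central bootstrap unexecuted.
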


     Notice that if\/ $\Omega$ is a domain with $\int_\Omega |K_\Omega(\cdot,w)|^p$ locally uniformly bounded in $w$ for some $p\ge 1$, then for any $\phi\in C_0^\infty(\Omega)$,
$$
|P_\Omega (\phi)(z)|^p\le \int_{\zeta\in {\rm supp\,}\phi} |K_\Omega(\zeta,z)|^p\, \|\phi\|_{L^q(\Omega)}^p,\ \ \ (1/p+1/q=1),
$$
 so that
\begin{equation}\label{eq:BergmanProjection}
\int_{z\in \Omega}|P_\Omega (\phi)(z)|^p\le \|\phi\|_{L^q(\Omega)}^p\,\int_{\zeta\in {\rm supp\,}\phi} \int_{z\in \Omega}|K_\Omega(z,\zeta)|^p<\infty,
\end{equation}
i.e., $P_\Omega$ maps $C^\infty_0(\Omega)$ into $L^p(\Omega)$. Thus we have

  \begin{corollary}\label{coro:biholomorphic}
      Let\/ $\Omega_1\subset {\mathbb C}^n$ be a bounded domain with $C^2-$boundary such that the integral $\int_\Omega |K_\Omega(\cdot,w)|^p$ is locally uniformly bounded in $w$ for some $p>2$.  Let\/ $\Omega_2\subset {\mathbb C}^n$ be a bounded domain with real-analytic boundary. Then any biholomorphic map $F:\Omega_1\rightarrow \Omega_2$ extends to a H\"older continuous map\/ $\overline{\Omega}_1\rightarrow \overline{\Omega}_2$.
     \end{corollary}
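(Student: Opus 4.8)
The plan is to reduce the corollary entirely to Lempert's theorem (Theorem \ref{th:Lempert}). That theorem's only requirement on the source domain $\Omega_1$ is that its Bergman projection $P_{\Omega_1}$ carry $C_0^\infty(\Omega_1)$ into $L^p(\Omega_1)$ for some $p>2$, and the computation leading to (\ref{eq:BergmanProjection}) in the text just above the statement already shows how this follows from the hypothesis that $\int_\Omega |K_\Omega(\cdot,w)|^p$ be locally uniformly bounded in $w$. So the argument is, in essence, a bookkeeping of that computation followed by a citation.

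Concretely, I would fix $\phi\in C_0^\infty(\Omega_1)$, write the reproducing formula $P_{\Omega_1}(\phi)(z)=\int_{\Omega_1}K_{\Omega_1}(z,\zeta)\phi(\zeta)$, and estimate it pointwise by H\"older's inequality with exponents $p$ and $q=p/(p-1)$, obtaining $|P_{\Omega_1}(\phi)(z)|^p\le \|\phi\|_{L^q(\Omega_1)}^p\int_{{\rm supp}\,\phi}|K_{\Omega_1}(z,\zeta)|^p$ --- the first display inside (\ref{eq:BergmanProjection}). Integrating in $z$ over $\Omega_1$ and interchanging the order of integration (permissible since the integrand is nonnegative) gives $\int_{\Omega_1}|P_{\Omega_1}(\phi)(z)|^p\le \|\phi\|_{L^q(\Omega_1)}^p\int_{{\rm supp}\,\phi}\Bigl(\int_{\Omega_1}|K_{\Omega_1}(z,\zeta)|^p\Bigr)$. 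By the Hermitian symmetry $K_{\Omega_1}(z,\zeta)=\overline{K_{\Omega_1}(\zeta,z)}$ the inner integral equals $\int_{\Omega_1}|K_{\Omega_1}(\cdot,\zeta)|^p$, which by hypothesis is bounded by a constant uniformly for $\zeta$ ranging over the compact set ${\rm supp}\,\phi$; hence the right-hand side is finite, i.e. $P_{\Omega_1}$ maps $C_0^\infty(\Omega_1)$ into $L^p(\Omega_1)$.

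Having verified this, $\Omega_1$ meets all the hypotheses of Theorem \ref{th:Lempert}: it is a bounded domain with $C^2$-boundary whose Bergman projection sends $C_0^\infty(\Omega_1)$ into $L^p(\Omega_1)$ with $p>2$, while $\Omega_2$ is a bounded domain with real-analytic boundary. Applying that theorem directly yields that any biholomorphic map $F:\Omega_1\to\Omega_2$ extends to a H\"older continuous map $\overline{\Omega}_1\to\overline{\Omega}_2$, which is exactly the assertion.

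There is no genuine obstacle here, since all the substantive work is hidden inside Theorem \ref{th:Lempert}, which we may invoke. The only two points worth a sentence of justification are the use of Fubini's theorem (harmless for a nonnegative integrand) and the upgrade from ``locally uniformly bounded in $w$'' to an honest uniform bound over ${\rm supp}\,\phi$, which is immediate from compactness of the support of $\phi$.
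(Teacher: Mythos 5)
Your argument is precisely the paper's: the displayed computation immediately preceding the statement (culminating in (\ref{eq:BergmanProjection})) is the verification that the local uniform boundedness of $\int_\Omega |K_\Omega(\cdot,w)|^p$ makes $P_{\Omega_1}$ map $C_0^\infty(\Omega_1)$ into $L^p(\Omega_1)$, after which the conclusion is a direct invocation of Theorem \ref{th:Lempert}. Your bookkeeping of the H\"older step, the Tonelli interchange, and the compactness upgrade is correct and matches the intended proof.
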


  In particular, it follows from Corollary \ref{coro:biholomorphic} and Theorem \ref{th:Main} that any biholomorphic map between a bounded\/ {\it pseudoconvex\/} domain with $C^2-$boundary and a bounded domain with real-analytic boundary extends to a H\"older continuous map between their closures, which was first proved in Diederich-Fornaess \cite{DiederichFornaessProper}. On the other hand, Barrett \cite{BarrettIrregular} constructed a \/ {\it non-pseudoconvex\/} bounded smooth domain $\Omega\subset {\mathbb C}^2$ such that $P_\Omega$ fails to map $C^\infty_0(\Omega)$ into $L^p(\Omega)$ for any $p>2$, so that $\int_\Omega |K_\Omega(\cdot,w)|^p$ can not be locally uniformly bounded in $w$. However it is still expected that if $\Omega$ is a bounded domain with\/ {\it real-analytic\/} boundary then there exists $p>2$ such that $\int_\Omega |K_\Omega(\cdot,w)|^p$ is locally uniformly bounded in $w$.

   With the help of an elegant technique due to Blocki \cite{BlockiGreen} (see also \cite{HerbortGreen} for prior related techniques) on estimating the pluricomplex Green function, we may prove the following

  \begin{theorem}\label{th:Off-diagonal}
 Let\/ $\Omega\subset {\mathbb C}^n$ be a bounded domain with $\alpha(\Omega)>0$. For every $0<r<1$, there exists a constant $C>0$ such that
 \begin{equation}\label{eq:Off-diagonal}
 {\mathcal B}_\Omega(z,w):=\frac{|K_\Omega(z,w)|^2}{K_\Omega(z)K_\Omega(w)}\le C \left(\min\left\{\frac{\nu(z)}{\mu(w)},\frac{\nu(w)}{\mu(z)}\right\}\right)^r,\ \ \ z,w\in \Omega,
 \end{equation}
 where $\mu:=|\varrho|/(1+|\log|\varrho||)$ and $\nu:=|\varrho|(1+|\log|\varrho||)^n$.
 \end{theorem}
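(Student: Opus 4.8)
The plan is to reduce \eqref{eq:Off-diagonal} to a pointwise bound on the normalized reproducing kernel and then to combine a $\dbar$-localization of that kernel onto a sublevel set of the pluricomplex Green function with a weighted $L^2$ estimate in the spirit of Theorem~\ref{th:Main} and Blocki's technique from \cite{BlockiGreen}. Since $\mathcal{B}_\Omega(z,w)=\mathcal{B}_\Omega(w,z)$, it is enough to prove $\mathcal{B}_\Omega(z,w)\le C(\nu(z)/\mu(w))^r$ for all $z,w\in\Omega$, because interchanging $z$ and $w$ then produces the $\min$ in \eqref{eq:Off-diagonal}. Writing $k_w:=K_\Omega(\cdot,w)/\sqrt{K_\Omega(w)}$ we have $\|k_w\|_{L^2(\Omega)}=1$ and $\mathcal{B}_\Omega(z,w)=|k_w(z)|^2/K_\Omega(z)$, so it suffices to construct, for each $z$, a holomorphic $h$ on $\Omega$ with $h(z)=k_w(z)$ and $\|h\|_{L^2(\Omega)}^2\le C(\nu(z)/\mu(w))^r$; indeed $\mathcal{B}_\Omega(z,w)=|h(z)|^2/K_\Omega(z)\le\|h\|_{L^2(\Omega)}^2$. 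This route uses \emph{no} lower bound for $K_\Omega(z)$, which is what makes it workable on domains with very irregular boundary.

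To build $h$, let $g_z:=g_\Omega(\cdot,z)$ be the pluricomplex Green function with pole at $z$, fix a cut-off $\chi$ depending only on $g_z$ with $\chi\equiv1$ on $\{g_z<-3\}$ and $\operatorname{supp}\chi\subset\{g_z<-2\}\Subset\Omega$, and solve $\dbar u=(\dbar\chi)\,k_w$ by H\"ormander's $L^2$ estimate with a weight comparable to $e^{-2ng_z}$ near $z$ and to a constant on $\operatorname{supp}\dbar\chi$ (the standard choice uses $2ng_z$ plus a small strictly psh correction and absorbs $i\partial\chi\wedge\dbar\chi$ by $i\partial\dbar(-\log(-g_z))$ on $\operatorname{supp}\dbar\chi$). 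Because $e^{-2ng_z}$ is comparable to $|\cdot-z|^{-2n}$ near $z$ it forces $u(z)=0$, while its boundedness on $\operatorname{supp}\dbar\chi$ gives $\|u\|_{L^2(\Omega)}^2\le C_n\int_{\operatorname{supp}\dbar\chi}|k_w|^2\,d\lambda$. Then $h:=\chi k_w-u$ is holomorphic, $h(z)=k_w(z)$, and $\|h\|_{L^2(\Omega)}^2\le C_n\int_{\{g_z<-1\}}|k_w|^2\,d\lambda$, so the theorem reduces to $\int_{\{g_\Omega(\cdot,z)<-1\}}|k_w|^2\,d\lambda\le C(\nu(z)/\mu(w))^r$.

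For this last estimate I would weight by $\mu^{-r}$. A direct computation of $i\partial\dbar$ using that $\varrho$ is negative and psh shows that $-\log|\varrho|$ and $\log(1+|\log|\varrho||)$ are plurisubharmonic; hence $\mu^{-r}=|\varrho|^{-r}(1+|\log|\varrho||)^{r}$ is log-plurisubharmonic for every $r>0$, and so is $|k_w|^2\mu^{-r}$. Consequently $\int_{\{g_z<-1\}}|k_w|^2\,d\lambda\le\bigl(\sup_{\{g_z<-1\}}\mu\bigr)^{r}\int_\Omega|k_w|^2\mu^{-r}\,d\lambda$. Blocki's estimate of the Green function \cite{BlockiGreen}, together with $-\varrho\le C\delta^\alpha$ (valid for each $\alpha<\alpha(\Omega)$), controls the ``Green ball'' $\{g_z<-1\}$ in terms of $\varrho(z)$: it is contained in a region on which $|\varrho|$ does not exceed a constant times $|\varrho(z)|$ up to a power of $1+|\log|\varrho(z)||$, and that power is precisely what the definitions of $\nu$ and $\mu$ package, so $\sup_{\{g_z<-1\}}\mu\le C\,\nu(z)$. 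Finally, the global weighted bound $\int_\Omega|k_w|^2\mu^{-r}\,d\lambda\le C\,\mu(w)^{-r}$ for $0<r<1$ is obtained by the same machinery as Theorem~\ref{th:Main} — a $\dbar$/Donnelly--Fefferman argument in which Blocki's Green-function estimate supplies the localization near $w$ — the constant degenerating as $r\to1^-$, which is why only $r<1$ is reached. Chaining the three inequalities yields $\mathcal{B}_\Omega(z,w)\le C\,\nu(z)^r\mu(w)^{-r}$, as required.

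The crux is this last paragraph: the sharp borderline weighted estimate $\int_\Omega|k_w|^2\mu^{-r}\,d\lambda\le C\mu(w)^{-r}$ and the precise location of $\{g_\Omega(\cdot,z)<-1\}$ relative to $\varrho$. The difficulty is that $\varrho$ obeys only the one-sided bound $-\varrho\le C\delta^\alpha$ and may decay faster than any power of $\delta$ near bad boundary points, so no elementary volume or integrability estimate is available; Blocki's Green-function technique is exactly what bypasses this, and extracting the optimal logarithmic powers — hence the precise forms of $\mu$ and $\nu$ — is the delicate part.
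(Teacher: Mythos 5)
Your overall strategy coincides with the paper's: reduce to a one-sided bound by symmetry of $\mathcal{B}_\Omega$, build a holomorphic $h$ with $h(z)=k_w(z)$ by cutting off near the sublevel set $\{g_\Omega(\cdot,z)<-1\}$ and solving a $\dbar$-equation with a Donnelly--Fefferman weight built from $g_\Omega(\cdot,z)$, and then control $\int_{\{g_\Omega(\cdot,z)<-1\}}|k_w|^2$ by combining the inclusion $\{g_\Omega(\cdot,z)<-1\}\subset\{|\varrho|<C\nu(z)\}$ (the paper's Proposition~\ref{prop:GreenEstimate2}) with an $L^2$ boundary decay estimate for $k_w$ (Proposition~\ref{prop:BerezinIntegral}). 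The skeleton is correct.

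There is, however, a genuine gap in the last step. You split $\int_{\{g_z<-1\}}|k_w|^2\le(\sup_{\{g_z<-1\}}\mu)^r\int_\Omega|k_w|^2\mu^{-r}$ and claim the global weighted bound $\int_\Omega|k_w|^2\mu^{-r}\le C\mu(w)^{-r}$. That matched-exponent bound does not follow from the machinery you invoke. The Berndtsson--Charpentier estimate, applied as in the paper's remark after Proposition~\ref{prop:BergmanIntegral}, gives $\int_\Omega|k_w|^2|\varrho|^{-r}\le C\mu(w)^{-r}$; but $\mu^{-r}=|\varrho|^{-r}(1+|\log|\varrho||)^r$ carries an unbounded logarithmic factor, and a dyadic decomposition against Proposition~\ref{prop:BerezinIntegral} only yields $\int_\Omega|k_w|^2\mu^{-r}\lesssim\mu(w)^{-r}\,|\log|\varrho(w)||^r$ (equivalently $\mu(w)^{-r'}$ for some $r'>r$, with a constant degenerating as $r'\downarrow r$). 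That extra factor does not telescope against the first factor of your chain, which lives in the $z$-variable, so the product $\nu(z)^r\mu(w)^{-r'}$ cannot be rewritten as $C(\nu(z)/\mu(w))^{r''}$ for any $r''<1$ uniformly in $z,w$. The fix is simply to weight by $|\varrho|^{-r}$ rather than $\mu^{-r}$: one has $\sup_{\{g_z<-1\}}|\varrho|\le C\nu(z)$ (directly from Proposition~\ref{prop:GreenEstimate2}) and $\int_\Omega|k_w|^2|\varrho|^{-r}\le C\mu(w)^{-r}$ (Berndtsson--Charpentier plus Proposition~\ref{prop:GreenLowerEstimate} and the comparison $K_{A_w}(w)\le C_nK_\Omega(w)$), giving the desired $(\nu(z)/\mu(w))^r$ with matched exponents. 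Alternatively, and more directly, one can skip the weighted integral entirely and apply Proposition~\ref{prop:BerezinIntegral} with $\varepsilon=C\nu(z)$, which is exactly what the paper does. Separately, your appeal to ``Blocki's Green-function estimate'' for the inclusion $\{g_z<-1\}\subset\{|\varrho|\lesssim\nu(z)\}$ glosses over a real issue: Blocki's original argument assumes a two-sided bound $C_1\delta^\alpha\le-\rho\le C_2\delta^\alpha$, whereas here only the upper bound $-\varrho\le C\delta^\alpha$ is available; bridging that gap (via the comparison inequality of Lemma~\ref{lm:GreenSymmetric} applied to the relative extremal function, and the quasi-H\"older estimate of Lemma~\ref{lm:Green_Holder2}) is precisely what produces the logarithmic powers in $\nu$ and $\mu$, and is the nontrivial content of the paper's Section~5 that your sketch leaves unaddressed.
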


 We call $ {\mathcal B}_\Omega(z,w)$ the normalized Bergman kernel of $\Omega$. There is a long list of papers concerning point-wise estimates of the\/ {\it weighted\/} normalized Bergman kernel ${\mathcal B}_{\Omega,\varphi}(z,w):=\frac{|K_{\Omega,\varphi}(z,w)|^2}{K_{\Omega,\varphi}(z)K_{\Omega,\varphi}(w)}$ when $\Omega$ is ${\mathbb C}^n$ or a compact algebraic manifold, after
  a seminal paper of Christ \cite{Christ91} (see   \cite{Delin96}, \cite{Lindholm01}, \cite{MaMarinescu}, \cite{Christ13}, \cite{Zelditch16}, etc.). Quantitative measurements of positivity of $i\partial\bar{\partial}\varphi$ play a crucial role in these works.

  The basic difference between
  ${\mathcal B}_\Omega(z,w)$ and ${\mathcal B}_{\Omega,\varphi}(z,w)$ is that the former is always a\/ {\it biholomorphic invariant}.
  Skwarczy\'nski \cite{Skwarczynski} showed that
 $$
 d_S(z,w):=\left(1-\sqrt{{\mathcal B}_\Omega(z,w)}\right)^{1/2}
 $$
 gives an invariant distance on a bounded domain $\Omega$. The relationship between $d_S$ and the Bergman distance $d_B$ is as follows
 \begin{equation}\label{eq:BergmanVsSkwarczynski}
 d_B(z,w)\ge \sqrt{2}\, d_S(z,w)
 \end{equation}
 (see e.g. \cite{JarnickiPflug}, Corollary 6.4.7). By Theorem \ref{th:Off-diagonal} and (\ref{eq:BergmanVsSkwarczynski}), we may prove the following

 \begin{corollary}\label{cor:BergmanDistance}
 If\/ $\Omega$ is a bounded domain with $\alpha(\Omega)>0$, then for fixed $z_0\in \Omega$ there exists a constant $C>0$ such that
 \begin{equation}\label{eq:BlockiEstimate}
  d_B(z_0,z)\ge C \frac{|\log \delta(z)|}{\log|\log \delta(z)|}
  \end{equation}
  provided $z$  sufficiently close to $\partial \Omega$.
 \end{corollary}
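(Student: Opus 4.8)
The plan is to estimate $d_B(z_0,z)$ from below by running the off-diagonal bound of Theorem~\ref{th:Off-diagonal} along \emph{every} curve joining $z_0$ to $z$, exploiting the additivity of Bergman length over subarcs together with the comparison (\ref{eq:BergmanVsSkwarczynski}). Fix once and for all a number $0<\alpha<\alpha(\Omega)$ for which $-\varrho\le C_\alpha\delta^\alpha$, a number $r\in(0,1)$, the constant $C_r$ furnished by Theorem~\ref{th:Off-diagonal} for this $r$, and set $\eps_0:=(2C_r)^{-1/r}$. Put $a_0:=|\varrho(z_0)|\in(0,1]$ (positive because $\varrho<0$ on the connected domain $\Omega$, and $\le 1$ since the constant $-1$ is a competitor in the definition of $\varrho$); for $z$ near $\partial\Omega$ set $a:=|\varrho(z)|$, which is small since $\varrho$ is continuous on $\overline\Omega$ and vanishes on $\partial\Omega$.

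First I would build, depending only on $z_0$, $z$ and the fixed constants, a geometric ladder of $\varrho$-levels: put $\theta:=\eps_0(1+|\log a|)^{-(n+1)}$, $v_k:=\theta^k a_0$ for $0\le k\le N$, and $N:=\lfloor \log(a_0/a)/\log(1/\theta)\rfloor$, so that $\theta<1$, $a\le v_N$, and $N\to\infty$ as $z\to\partial\Omega$. Given an arbitrary piecewise $C^1$ curve $\gamma\colon[0,1]\to\Omega$ from $z_0$ to $z$, the continuous function $t\mapsto|\varrho(\gamma(t))|$ runs from $a_0$ down to $a$; choosing $\tau_k$ to be the first time after $\tau_{k-1}$ at which it equals $v_k$ (which exists by the intermediate value theorem, since $v_k$ lies between $|\varrho(\gamma(\tau_{k-1}))|=v_{k-1}$ and $|\varrho(\gamma(1))|=a$), I get $0=\tau_0<\tau_1<\dots<\tau_N\le 1$ and points $p_k:=\gamma(\tau_k)$ with $|\varrho(p_k)|=v_k$.

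Next I would bound $\mathcal{B}_\Omega$ between consecutive rungs. As $a\le v_j\le 1$ gives $1+|\log v_j|\le 1+|\log a|$, the definitions of $\mu$ and $\nu$ give at once
\[
\frac{\nu(p_k)}{\mu(p_{k-1})}=\frac{v_k}{v_{k-1}}\,(1+|\log v_k|)^n(1+|\log v_{k-1}|)\le \theta\,(1+|\log a|)^{n+1}=\eps_0 .
\]
Since $\mathcal{B}_\Omega(p_{k-1},p_k)\le C_r\bigl(\nu(p_k)/\mu(p_{k-1})\bigr)^r$ holds unconditionally (the right side dominates the minimum in (\ref{eq:Off-diagonal})), this forces $\mathcal{B}_\Omega(p_{k-1},p_k)\le C_r\eps_0^r=\tfrac12$, hence, by (\ref{eq:BergmanVsSkwarczynski}) and the formula for $d_S$,
\[
d_B(p_{k-1},p_k)\ge\sqrt2\,\bigl(1-\sqrt{\mathcal{B}_\Omega(p_{k-1},p_k)}\bigr)^{1/2}\ge\sqrt2\,(1-2^{-1/2})^{1/2}=:c_0>0 .
\]
Because the Bergman length $L_B$ of a curve is additive under concatenation and the length of an arc is at least the Bergman distance of its endpoints, $L_B(\gamma)\ge\sum_{k=1}^{N}d_B(p_{k-1},p_k)\ge c_0N$; as $N$ does not depend on $\gamma$, the infimum over $\gamma$ gives $d_B(z_0,z)\ge c_0N$.

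Finally I would unwind this: from $N\ge \log(a_0/a)/\log(1/\theta)-1$, from $\log(a_0/a)\ge\tfrac12|\log a|$, and from $\log(1/\theta)=(n+1)\log(1+|\log a|)-\log\eps_0\le 2(n+1)\log|\log a|$ (all valid once $z$ is close to $\partial\Omega$), one gets $d_B(z_0,z)\ge c_1\,|\log a|/\log|\log a|$ with $c_1>0$ depending only on $n,r,z_0$. Since $x\mapsto|\log x|/\log|\log x|$ is decreasing for $x$ small, the a priori bound $a=|\varrho(z)|\le C_\alpha\delta(z)^\alpha$ then gives $d_B(z_0,z)\ge c_1\,|\log(C_\alpha\delta(z)^\alpha)|/\log|\log(C_\alpha\delta(z)^\alpha)|\ge C\,|\log\delta(z)|/\log|\log\delta(z)|$ for $\delta(z)$ small, which is (\ref{eq:BlockiEstimate}). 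The subtle point---and the reason this argument does not yield the stronger $d_B(z_0,z)\gtrsim|\log\delta(z)|$---is that the mesh $\theta$ must be allowed to shrink like $(1+|\log a|)^{-(n+1)}$ rather than kept fixed, which caps the number of rungs at order $|\log\delta(z)|/\log|\log\delta(z)|$; carrying this out uniformly in the competing curve $\gamma$ (one $N$ for every $\gamma$) is exactly why length-additivity, rather than a single hand-picked path, is the right device.
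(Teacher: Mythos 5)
Your proof is correct and runs on essentially the same engine as the paper's: a ladder of points whose consecutive $\varrho$-values decrease by a controlled factor, Theorem~\ref{th:Off-diagonal} to bound $\mathcal{B}_\Omega$ on each rung, the Skwarczy\'nski--Bergman comparison (\ref{eq:BergmanVsSkwarczynski}) to convert that into a uniform lower bound on $d_B$ per rung, and a count of rungs. The one genuine (and welcome) structural difference is that you work with an arbitrary competing curve and use length-additivity plus an infimum, whereas the paper first invokes Bergman completeness and the Hopf--Rinow theorem to produce a geodesic and then places the rungs on it; your route sidesteps the (unnecessary) appeal to completeness, and your explicit intermediate-value construction of the rungs is cleaner than the paper's unstated existence claim. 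The remaining differences are cosmetic: you use a geometric mesh $v_k=\theta^k a_0$ with $\theta$ depending on $|\varrho(z)|$, which makes every rung satisfy $\mathcal{B}_\Omega\le 1/2$, while the paper's recursively defined ladder only makes $\mathcal{B}_\Omega$ small for $k\ge k_0$; both give the same $|\log\delta|/\log|\log\delta|$ count.
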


 Blocki \cite{BlockiGreen} first proved (\ref{eq:BlockiEstimate}) for any bounded domain which admits a continuous negative psh function $\rho$ with $C_1\delta^{\alpha}\le -\rho\le C_2\delta^\alpha$ for some constants $C_1,C_2,\alpha>0$ (e.g. $\Omega$ is a pseudoconvex domain with Lipschitz boundary \cite{HarringtonPSH}). Diederich-Ohsawa \cite{DiederichOhsawa} proved earlier that the following weaker inequality
 $$
  d_B(z_0,z)\ge C \log|\log \delta(z)|
  $$
  holds for more general bounded domains admitting a continuous negative psh function $\rho$ with $C_1\delta^{1/\alpha}\le -\rho\le C_2\delta^\alpha$ for some constants $C_1,C_2,\alpha>0$.

 In order to study isometric imbedding of K\"ahler manifolds, Calabi \cite{Calabi} introduced the notion "diastasis".
  In \cite{Berger}, Marcel Berger wrote:
    {\it It seems to me that the notion of diastasis should make a comeback $[\cdots]$.
     For example, it would be interesting to compare the diastasis with the various types of Kobayashi metrics $($when they exist\/$)$.}

  Notice that the diastasis $D_B(z,w)$ with respect to the Bergman metric is
 $
 -\log {\mathcal B}_\Omega(z,w).
 $

  \begin{corollary}\label{cor:Comparison}
  If\/ $\Omega$ is a bounded domain with $\alpha(\Omega)>0$, then for fixed $z_0\in \Omega$ there exists a constant $C>0$ such that
  \begin{equation}\label{eq:diastasisVsKobayashi}
  D_B(z_0,z)\ge C d_K(z_0,z)
  \end{equation}
  where $d_K$ denotes the Kobayashi distance.
 \end{corollary}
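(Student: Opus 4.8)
Recall that the Bergman diastasis is $D_B(z_0,z)=-\log{\mathcal B}_\Omega(z_0,z)$. On every compact subset of $\Omega$ avoiding $z_0$ the left--hand side of (\ref{eq:diastasisVsKobayashi}) is bounded below by a positive constant while the right--hand side is bounded above, so there (\ref{eq:diastasisVsKobayashi}) is trivial; since both sides go to $+\infty$ as $z\to\partial\Omega$, the content lies in comparing the two rates, and the plan is to prove, for $z$ near $\partial\Omega$,
\[
D_B(z_0,z)\ge c_1\,\big|\log|\varrho(z)|\big|
\qquad\text{and}\qquad
d_K(z_0,z)\le c_2\,\big|\log|\varrho(z)|\big|,
\]
with $c_1,c_2>0$ depending only on $z_0$ and $\Omega$; then (\ref{eq:diastasisVsKobayashi}) follows with $C=c_1/c_2$.

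The first estimate is a direct consequence of Theorem \ref{th:Off-diagonal}. Fixing $z_0$, the quantities $\mu(z_0),\nu(z_0)$ are positive constants, and since $|\varrho(z)|\to0$ as $z\to\partial\Omega$ so do $\mu(z)$ and $\nu(z)$; hence for $z$ close to $\partial\Omega$ the minimum in (\ref{eq:Off-diagonal}) equals $\nu(z)/\mu(z_0)$. Taking $r=\tfrac12$ there gives ${\mathcal B}_\Omega(z_0,z)\le C\,(\nu(z)/\mu(z_0))^{1/2}$, whence
\[
D_B(z_0,z)\ge-\tfrac12\log\nu(z)+O(1)=\tfrac12\big|\log|\varrho(z)|\big|-\tfrac n2\log\!\big(1+\big|\log|\varrho(z)|\big|\big)+O(1),
\]
which exceeds $\tfrac14\big|\log|\varrho(z)|\big|$ once $|\varrho(z)|$ is small enough.

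The second estimate is where the work is. Fix $0<\alpha<\alpha(\Omega)$, so $-\varrho\le C_\alpha\delta^\alpha$ and each sublevel set $\{\varrho\le-t\}$ $(0<t<1)$ is compact in $\Omega$ and contained in $\{\delta\ge(t/C_\alpha)^{1/\alpha}\}$. Let $K_0:=\{\varrho\le-\tfrac12\}$, a fixed compact set containing $\overline{B}\ni z_0$; then $d_K(z_0,z)$ differs from $d_K(K_0,z)$ by an additive constant, and $z\notin K_0$ for $z$ near $\partial\Omega$, so it suffices to bound $d_K(K_0,z)$. I would do this by a chain of Euclidean balls steered by the level sets of $\varrho$ (equivalently, estimate the quasihyperbolic distance, since $d_K^\Omega(\cdot,\cdot)\le\int\delta^{-1}\,ds$ along any path): put $\zeta_0:=z$ and, given $\zeta_k$ with $t_k:=-\varrho(\zeta_k)$, observe that $\Omega$ contains $B(\zeta_k,\tfrac12\delta(\zeta_k))$ with $\delta(\zeta_k)\ge(t_k/C_\alpha)^{1/\alpha}$, so any $\zeta_{k+1}\in\overline{B(\zeta_k,\tfrac12\delta(\zeta_k))}$ satisfies
\[
d_K^\Omega(\zeta_k,\zeta_{k+1})\le d_K^{B(\zeta_k,\delta(\zeta_k))}(\zeta_k,\zeta_{k+1})\le\tanh^{-1}\tfrac12.
\]
Taking $\zeta_{k+1}$ to minimise $\varrho$ over that closed half--ball makes $t_{k+1}\ge t_k$; \emph{granting} a uniform geometric gain $t_{k+1}\ge(1+c)t_k$ for a fixed $c>0$, one gets $t_k\ge(1+c)^k t_0$, so $\zeta_N\in K_0$ after $N\le C\log(1/t_0)=C\big|\log|\varrho(z)|\big|$ steps, and summing the step costs gives $d_K(K_0,z)\le(\tanh^{-1}\tfrac12)\,N\le C'\big|\log|\varrho(z)|\big|$.

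The main obstacle is exactly this geometric gain: submean--value for the psh function $\varrho$ gives only $t_{k+1}\ge t_k$, so what is needed is a quantitative \emph{lower} bound on how far $\varrho$ can be decreased inside $B(\zeta_k,\tfrac12\delta(\zeta_k))$, uniform in $k$ --- a Harnack / reverse--Hopf type statement for $\varrho$ (informally $|\nabla\varrho|\gtrsim(-\varrho)/\delta$, or a uniform control of the shrinking of the sublevel sets $\{\varrho<-t\}$ as $t\downarrow0$), which should follow from the maximality of $\varrho$ on $\Omega\setminus\overline{B}$ together with the bound $-\varrho\le C_\alpha\delta^\alpha$; this is where Blocki's technique for the pluricomplex Green function, already used for Theorem \ref{th:Main} (cf.\ \cite{BlockiGreen}), would come in. In dimension $n=1$ the argument is classical: $\varrho$ is harmonic off $\overline{B}$ and comparable to the Green function $G_\Omega(\cdot,z_0)$ near $\partial\Omega$, while the Poincar\'e distance is comparable to $\big|\log(-G_\Omega(\cdot,z_0))\big|$ there, giving $d_K(z_0,z)\le C\big|\log|\varrho(z)|\big|+C'$ at once. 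Combining, $D_B(z_0,z)\ge c_1\big|\log|\varrho(z)|\big|\ge(c_1/c_2)\,d_K(z_0,z)$ for $z$ near $\partial\Omega$, which with the elementary compact case establishes (\ref{eq:diastasisVsKobayashi}).
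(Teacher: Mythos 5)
Your overall strategy matches the paper exactly: split (\ref{eq:diastasisVsKobayashi}) into $D_B(z_0,z)\gtrsim|\log\delta(z)|$ and $d_K(z_0,z)\lesssim|\log\delta(z)|$ near $\partial\Omega$, and dispose of the compact case trivially. The first estimate is handled the same way in both arguments, by Theorem \ref{th:Off-diagonal} together with $-\varrho\le C_\alpha\delta^\alpha$; that part of your write-up is correct.

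Where you diverge is the Kobayashi upper bound, and there you have a genuine, and honestly flagged, gap. The paper's route is much shorter: since the Kobayashi--Royden metric decreases under the inclusion $B(z,\delta(z))\hookrightarrow\Omega$, one has $F_K(z;X)\le C|X|/\delta(z)$, so $d_K$ is dominated by the quasihyperbolic distance, and the estimate $d_K(z_0,z)\le C|\log\delta(z)|$ is then read off (with the integration-along-a-path details deferred to the proof of Proposition 7.3 in \cite{BYChen}). Your alternative is a chain of Euclidean half-balls steered by the level sets of $\varrho$, which, to terminate in $O(|\log|\varrho(z)||)$ steps, needs the uniform geometric gain $t_{k+1}\ge(1+c)t_k$. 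As you yourself note, nothing you have written establishes this: plurisubharmonicity gives only $t_{k+1}\ge t_k$, and the hypothesis $-\varrho\le C_\alpha\delta^\alpha$ controls $\delta$ from below by a power of $|\varrho|$ but says nothing about how quickly $\varrho$ must drop inside a half-radius ball. The ``reverse Hopf'' or Harnack-type inequality $|\nabla\varrho|\gtrsim(-\varrho)/\delta$ you invoke is precisely the missing lemma, and it does not follow from maximality of $\varrho$ off $\overline B$ plus the one-sided bound; in a region of $\Omega$ where $\delta$ (hence $\varrho$) is nearly constant over a Euclidean scale comparable to $\delta$, the chain stalls. So while the proposal identifies the right decomposition and proves the diastasis half correctly, the Kobayashi half is incomplete: replacing your chain argument by the paper's infinitesimal estimate $F_K(z;X)\le C|X|/\delta(z)$ (and then citing, or reproducing, the quasihyperbolic-distance computation of \cite{BYChen}) is both simpler and avoids the unproven gain.
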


 \begin{problem}
 Does one have $d_B(z_0,z)\ge C d_K(z_0,z)$ for bounded domains with $\alpha(\Omega)>0$?
 \end{problem}

 A domain $\Omega\subset {\mathbb C}^n$ is called\/ {\it weighted circular\/} if  there exists a $n-$tuple $(a_1,\cdots,a_n)$ of positive numbers such that $z\in \Omega$ implies $(e^{ia_1\theta}z_1,\cdots, e^{ia_n \theta}z_n)\in \Omega$ for any $\theta\in {\mathbb R}$.  As a final consequence of Theorem \ref{th:Off-diagonal}, we obtain

 \begin{corollary}\label{coro:biholomIneq}
  Let\/ $\Omega_1\subset {\mathbb C}^n$ be a bounded domain with $\alpha(\Omega_1)>0$. Let\/ $\Omega_2\subset {\mathbb C}^n$  be a bounded weighted circular domain
  which contains the origin.  Let\/ $0<\alpha<\alpha(\Omega_1)$ be given. Then for any biholomorphic map $F:\Omega_1\rightarrow \Omega_2$ there is a constant $C>0$ such that
  \begin{equation}\label{eq:biholomIneq}
  \delta_2 (F(z))\le C \delta_1(z)^{\frac{\alpha}{2n}},\ \ \ z\in \Omega_1.
  \end{equation}
  Here $\delta_1$ $($resp. $\delta_2)$ denotes the boundary distance of\/ $\Omega_1$ $($resp. $\Omega_2)$.
 \end{corollary}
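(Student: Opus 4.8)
\smallskip
\noindent\emph{Sketch of proof.} The plan is to combine the biholomorphic invariance of the normalized Bergman kernel ${\mathcal B}_\Omega$ with Theorem~\ref{th:Off-diagonal} applied to $\Omega_1$ and with the special structure of the Bergman kernel of a weighted circular domain at the centre of symmetry. The input about $\Omega_2$ is the elementary identity: \emph{if $\Omega\subset{\mathbb C}^n$ is a bounded weighted circular domain with $0\in\Omega$, then $K_\Omega(z,0)=K_\Omega(0)={\rm vol}(\Omega)^{-1}$ for every $z\in\Omega$, so that ${\mathcal B}_\Omega(z,0)=K_\Omega(0)/K_\Omega(z)$.} Indeed, the action $z\mapsto(e^{ia_1\theta}z_1,\dots,e^{ia_n\theta}z_n)$ preserves both $\Omega$ and Lebesgue measure; averaging the resulting equality $\int_\Omega f=\int_\Omega f(e^{ia_1\theta}z_1,\dots,e^{ia_n\theta}z_n)$ over $\theta\in[0,2\pi]$ and inspecting Taylor coefficients at $0$ (only the constant term being invariant, since the $a_j$ are positive) gives $\int_\Omega f={\rm vol}(\Omega)\,f(0)$ for every $f\in A^2(\Omega)$. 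Hence the constants are orthogonal in $A^2(\Omega)$ to $\{f\in A^2(\Omega):f(0)=0\}$, and writing $K_\Omega(\cdot,0)$ in an orthonormal basis whose non-constant members vanish at $0$ yields the identity.

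Now put $z_0:=F^{-1}(0)\in\Omega_1$. Since ${\mathcal B}$ is a biholomorphic invariant and $F(z_0)=0$,
\[
\frac{K_{\Omega_2}(0)}{K_{\Omega_2}(F(z))}={\mathcal B}_{\Omega_2}(F(z),0)={\mathcal B}_{\Omega_1}(z,z_0),
\]
while Theorem~\ref{th:Off-diagonal}, applied to $\Omega_1$ with the functions $\mu_1,\nu_1$ attached to its relative extremal function $\varrho_1$, gives for every fixed $r\in(0,1)$
\[
{\mathcal B}_{\Omega_1}(z,z_0)\le C\left(\min\left\{\frac{\nu_1(z)}{\mu_1(z_0)},\frac{\nu_1(z_0)}{\mu_1(z)}\right\}\right)^{r}\le\frac{C}{\mu_1(z_0)^{r}}\,\nu_1(z)^{r},
\]
where $\mu_1(z_0)>0$ is a constant. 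Hence $K_{\Omega_2}(F(z))\ge c\,\nu_1(z)^{-r}$. On the other hand $B(w,\delta_2(w))\subset\Omega_2$, so monotonicity of the Bergman kernel yields $K_{\Omega_2}(w)\le K_{B(w,\delta_2(w))}(w)=\tfrac{n!}{\pi^n}\delta_2(w)^{-2n}$; taking $w=F(z)$ we obtain $\delta_2(F(z))^{2n}\le C\,\nu_1(z)^{r}$ for all $z\in\Omega_1$.

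It remains to convert $\nu_1(z)^{r}$ into a power of $\delta_1(z)$. Given $0<\alpha<\alpha(\Omega_1)$, choose $\alpha'$ with $\alpha<\alpha'<\alpha(\Omega_1)$ and then $r\in(\alpha/\alpha',1)$, so that $\alpha/r<\alpha'$; by the introduction one has $|\varrho_1(z)|=-\varrho_1(z)\le C\delta_1(z)^{\alpha'}$. Since $t\mapsto t(1+|\log t|)^{n}$ is increasing near $t=0$ and $t^{\alpha'}(1+|\log t|)^{n}=o(t^{\alpha/r})$ as $t\to0^{+}$, this gives $\nu_1(z)=|\varrho_1(z)|(1+|\log|\varrho_1(z)||)^{n}\le C\delta_1(z)^{\alpha/r}$ once $\delta_1(z)$ is small; on the complementary compact part of $\Omega_1$ the bound is trivial, $\delta_2\circ F$ being bounded and $\delta_1$ bounded below. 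Therefore $\delta_2(F(z))^{2n}\le C\,\nu_1(z)^{r}\le C\,\delta_1(z)^{\alpha}$, which is (\ref{eq:biholomIneq}). I expect the only delicate point to be this exponent bookkeeping --- absorbing the logarithmic factors of $\nu_1$ and the loss from $r<1$ while still reaching the exponent $\alpha/(2n)$; but since $r$ may be taken arbitrarily close to $1$ and $\alpha'$ arbitrarily close to $\alpha(\Omega_1)$, this is harmless, and the genuine content of the argument lies entirely in Theorem~\ref{th:Off-diagonal} and in the weighted circular identity above.
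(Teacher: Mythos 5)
Your argument is correct and follows essentially the same route as the paper: use the weighted circular identity $K_{\Omega_2}(\cdot,0)=K_{\Omega_2}(0)$ to turn ${\mathcal B}_{\Omega_2}(F(z),0)$ into $K_{\Omega_2}(0)/K_{\Omega_2}(F(z))\ge c\,\delta_2(F(z))^{2n}$, invoke biholomorphic invariance of ${\mathcal B}$, and bound ${\mathcal B}_{\Omega_1}(z,F^{-1}(0))$ by a power of $\delta_1(z)$ via Theorem~\ref{th:Off-diagonal}. The one small deviation is in how you establish the circular identity: the paper's Lemma~\ref{lm:WeightedCircular} uses the transformation formula $K_{\Omega_2}(F_\theta(z),0)=K_{\Omega_2}(z,0)$ and then kills the nonconstant Taylor coefficients, whereas you observe that $F_\theta$ is measure-preserving and average $\int_\Omega f\circ F_\theta$ over $\theta$, which in addition gives the (unneeded) value $K_{\Omega_2}(0)={\rm vol}(\Omega_2)^{-1}$; both hinge on the same fact that $a_1m_1+\cdots+a_nm_n=0$ with $a_j>0$, $m_j\ge0$ forces $m=0$. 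Your exponent bookkeeping (choosing $\alpha<\alpha'<\alpha(\Omega_1)$ and $r\in(\alpha/\alpha',1)$ to absorb the logarithmic factor of $\nu_1$) is correct and merely makes explicit what the paper leaves implicit.
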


 \begin{remark}
  Inequalities like $(\ref{eq:biholomIneq})$ are crucial in the study of regularity problem of biholomorphic maps\/ $($see e.g. \cite{DiederichFornaessProper}, \cite{Lempert}$)$.
 \end{remark}

\section{$L^2$ boundary decay estimates of the Bergman kernel}

\begin{proposition}\label{prop:BergmanIntegral}
 Let $\Omega\subset {\mathbb C}^n$ be a pseudoconvex domain. Let $\rho$ be a negative continuous psh  function on $\Omega$. Set
  $$
 \Omega_t=\{z\in \Omega:-\rho(z)>t\},\ \ \ t>0.
 $$
   Let $a>0$ be given.
  For every $0<r<1$, there exist constants $\varepsilon_r,C_r>0$ such that
  \begin{equation}\label{eq:2.1}
  \int_{-\rho\le \varepsilon} |K_\Omega(\cdot,w)|^2 \le C_{r}\, K_{\Omega_a}(w) (\varepsilon/a)^{r}
  \end{equation}
  for all $w\in \Omega_a$ and $\varepsilon\le \varepsilon_r a$.
  \end{proposition}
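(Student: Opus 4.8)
The plan is to reproduce the Bergman kernel on the smaller domain $\Omega_a$ and use a Donnelly–Fefferman / Ohsawa-type twisted $\bar\partial$-estimate to push it back to $\Omega$, keeping track of the weight factor $(\varepsilon/a)^r$. Write $f(z) := K_{\Omega_a}(z,w)$, the Bergman kernel of $\Omega_a$; then $f\in A^2(\Omega_a)$ with $f(w)=K_{\Omega_a}(w)$ and $\|f\|_{L^2(\Omega_a)}^2 = K_{\Omega_a}(w)$. The goal is to modify $f$ to a holomorphic function $F$ on all of $\Omega$ with $F(w)=K_{\Omega_a}(w)$ and control of $\int_{-\rho\le\varepsilon}|F|^2$; then by the extremal property of the Bergman kernel, $|K_\Omega(\cdot,w)|/\sqrt{K_\Omega(w)}$ restricted to any set has $L^2$-norm no bigger than $\|g\|_{L^2(\Omega)}/|g(w)|$ for any competitor $g\in A^2(\Omega)$ with $g(w)\ne 0$ — so in fact it suffices to bound $\int_{-\rho\le\varepsilon}|F|^2$ by $C_r\,K_{\Omega_a}(w)(\varepsilon/a)^r\cdot (\text{something})$ and observe the normalization cancels.

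First I would choose a cutoff $\chi\colon\R\to[0,1]$ with $\chi\equiv 1$ on $(-\infty,a/2]$ and $\chi\equiv 0$ on $[a,\infty)$, and set $v:=\bar\partial(\chi(-\rho)\,f)= f\,\bar\partial(\chi(-\rho))$, a $\bar\partial$-closed $(0,1)$-form supported in the shell $\{a/2\le -\rho\le a\}\subset\Omega_{a/2}$. I then solve $\bar\partial u = v$ on $\Omega$ with an $L^2$-estimate against a cleverly chosen weight. The natural weight is built from $\rho$ itself: on the region $\{-\rho\le\varepsilon\}$ I want a large positive factor, while near the support of $v$ (where $-\rho\ge a/2$) the weight should be harmless. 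A standard device is the Berndtsson–Charpentier trick: use the weight $e^{-\psi}$ with $\psi = $ (a function of $-\rho$) together with the Donnelly–Fefferman estimate exploiting $i\partial\bar\partial(-\log(-\rho))\ge i\,\partial\log(-\rho)\wedge\bar\partial\log(-\rho)$, valid because $\rho$ is psh and negative. Concretely, taking a bounded psh function like $\log(-\rho/a)$ or a truncation thereof supplies the curvature needed to absorb the gradient of the cutoff, at the cost of a factor like $(\varepsilon/a)$ raised to a power controlled by the parameter in the Donnelly–Fefferman inequality — and letting that parameter run gives every exponent $r<1$. Then $F:=\chi(-\rho)f - u$ is holomorphic on $\Omega$, and since the weight forces $u$ to vanish to the right order at $\partial\Omega$ and in particular $u(w)=0$ (choose the weight with a log pole or simply note $w\in\{-\rho>a\}$ lies where $\chi\equiv 1$, $v\equiv 0$ locally, so actually one only needs $u$ small, not zero — more carefully, subtract the value: replace $f$ by $f$ and use that $\chi(-\rho(w))f(w)=0$ since $-\rho(w)>a$... hence one instead works with $w\in\Omega_a$ meaning $-\rho(w)>a$, so $\chi(-\rho(w))=0$; thus I should instead take $\chi\equiv 1$ near $\partial\Omega$ and $\chi\equiv 0$ on $\Omega_a$, i.e. $F$ reproduces on the \emph{outer} shell). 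Let me restate: choose $\chi\equiv 0$ on $\{-\rho\ge a\}$, $\chi\equiv 1$ on $\{-\rho\le a/2\}$; then on $\{-\rho\le\varepsilon\}$ (with $\varepsilon<a/2$) we have $F=f-u$, and $\int_{-\rho\le\varepsilon}|F|^2\le 2\int_{-\rho\le\varepsilon}|f|^2 + 2\int|u|^2$; the weight makes $\int_{-\rho\le\varepsilon}|u|^2e^{-\psi}$ small, and one estimates $\int_{-\rho\le\varepsilon}|f|^2$ directly using that $f=K_{\Omega_a}(\cdot,w)$ is itself $L^2$ on $\Omega_a$ with a sub-mean-value bound on the thin shell $\{-\rho\le\varepsilon\}$, which contributes the $(\varepsilon/a)^r$ shrinkage through the same psh-comparison. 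Finally, $|F(w)|\ge |f(w)| - |u(w)| \ge K_{\Omega_a}(w)/2$ once $\varepsilon$ is small, by making the error term a small fraction of $f(w)$.

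The main obstacle is the interplay between two competing demands on the weight: it must be large enough on $\{-\rho\le\varepsilon\}$ to deliver the factor $(\varepsilon/a)^r$, yet tame enough on the support of $\bar\partial\chi$ that the Donnelly–Fefferman curvature term $i\partial\bar\partial(-\log(-\rho))$ actually dominates $|\bar\partial\chi|^2$ there — this is exactly where one loses a bit, forcing $r<1$ rather than $r=1$, and getting the constants $\varepsilon_r$ and $C_r$ to blow up in a controlled way as $r\uparrow 1$ requires care. A secondary technical point is making $u(w)$ genuinely negligible: since $w\in\Omega_a$ sits in the region $\{\chi=0\}$, $F(w)=-u(w)$ at first sight looks bad, so one must either insert a point evaluation (a weight with a logarithmic singularity at $w$, as in Ohsawa–Takegoshi, giving $u(w)=0$ outright) or — cleaner — reverse the roles of the cutoff as above so that $w$ lies where $\chi=1$; I would adopt the latter and keep the Ohsawa–Takegoshi singular-weight variant in reserve in case the shell geometry makes the gradient estimate awkward.
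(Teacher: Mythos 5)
Your plan breaks at the very first logical step. You write that ``by the extremal property of the Bergman kernel, $|K_\Omega(\cdot,w)|/\sqrt{K_\Omega(w)}$ restricted to any set has $L^2$-norm no bigger than $\|g\|_{L^2(\Omega)}/|g(w)|$ for any competitor $g\in A^2(\Omega)$ with $g(w)\ne 0$.'' That is not what the extremal property says. The extremal property gives $K_\Omega(w)\ge |g(w)|^2/\|g\|_{L^2(\Omega)}^2$ for every competitor, i.e.\ a \emph{lower bound on the on-diagonal kernel}. It says nothing about the $L^2$-mass of $K_\Omega(\cdot,w)$ on a subset. Indeed your claim is already falsified by the competitor $g=K_\Omega(\cdot,w)$: then $\|g\|_{L^2(\Omega)}/|g(w)|=K_\Omega(w)^{-1/2}$, while the $L^2(\Omega)$-norm of $K_\Omega(\cdot,w)/\sqrt{K_\Omega(w)}$ is exactly $1$, so your inequality on $E=\Omega$ would force $K_\Omega(w)\le 1$. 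Consequently, building a single nice competitor $F$ with $F(w)$ large and $\|F\|_{L^2(\Omega)}$ controlled only lower-bounds $K_\Omega(w)$; it gives no handle on $\int_{-\rho\le\varepsilon}|K_\Omega(\cdot,w)|^2$, which is what the proposition asks for. A secondary confusion compounds this: you take $f=K_{\Omega_a}(\cdot,w)$ but then speak of $\int_{-\rho\le\varepsilon}|f|^2$ and of $\chi(-\rho)f$ near $\partial\Omega$, where $f$ is simply not defined ($\Omega_a=\{-\rho>a\}$ is the interior sublevel set, disjoint from $\{-\rho\le\varepsilon\}$). Neither orientation of the cutoff you discuss resolves this.

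The missing idea in the paper's proof is precisely the device that converts the restricted $L^2$-integral into something pointwise. One cuts off the kernel itself near the boundary, $\kappa(-\rho/\varepsilon)K_\Omega(\cdot,w)$ with $\kappa\equiv 1$ on $(-\infty,1]$ and $\kappa\equiv 0$ on $[3/2,\infty)$, and uses the reproducing/projection identity: if $u$ is the $L^2$-minimal solution of $\bar\partial u=\bar\partial(\kappa(-\rho/\varepsilon)K_\Omega(\cdot,w))$, then
$\int_\Omega\kappa(-\rho/\varepsilon)|K_\Omega(\cdot,w)|^2 = P_\Omega(\kappa(-\rho/\varepsilon)K_\Omega(\cdot,w))(w) = \kappa(-\rho(w)/\varepsilon)K_\Omega(w)-u(w) = -u(w)$
whenever $\tfrac32\varepsilon\le a$ (so $\kappa$ vanishes at $w$). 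Thus the quantity to estimate equals $-u(w)$, and one estimates $u(w)$ \emph{pointwise}: apply Berndtsson's twisted $\bar\partial$-estimate with $\psi=-r\log(-\rho)$ to get $\int_\Omega|u|^2e^{-\psi}\le C_r\varepsilon^r\int_{-\rho\le\frac32\varepsilon}|K_\Omega(\cdot,w)|^2$, then use that $u$ is holomorphic on $\Omega_a$ (where $v\equiv 0$) together with $e^{-\psi}\ge a^r$ there and a sub-mean-value inequality to bound $|u(w)|^2\le K_{\Omega_a}(w)\,a^{-r}\int_\Omega|u|^2e^{-\psi}$. This gives an exponent $r/2$, and a bootstrap (replacing $\varepsilon$ by $\tfrac32\varepsilon$ and iterating) pushes the exponent arbitrarily close to $r$; an exhaustion and Fatou then remove the temporary boundedness assumption. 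Your proposal contains neither the projection identity that converts the target integral into $-u(w)$, nor the iteration that recovers the full exponent, so as written it cannot reach the statement.
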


  The proof of the proposition is essentially same as Proposition 6.1 in \cite{BYChen}. For the sake of completeness, we include a proof here. The key ingredient is the following weighted estimate of the $L^2-$minimal solution of the $\bar{\partial}-$equation due to Berndtsson:

  \begin{theorem}[cf. \cite{BYChen}, Corollary 2.3]\label{th:Berndtsson}
Let\/ $\Omega$ be a bounded pseudoconvex domain in\/ ${\mathbb C}^n$ and $\varphi\in PSH(\Omega)$. Let $\psi$ be a continuous psh function on $\Omega$
 which satisfies $ri\partial\bar{\partial}\psi\ge i\partial\psi\wedge \bar{\partial}\psi$ as currents for some $0<r<1$. Suppose $v$ is a $\bar{\partial}-$closed $(0,1)-$form on $\Omega$ such that $\int_\Omega |v|^2 e^{-\varphi}<\infty$. Then the $L^2(\Omega,\varphi)-$minimal solution of $\bar{\partial}u=v$ satisfies
 \begin{equation}\label{eq:L2Minimal}
 \int_\Omega |u|^2 e^{-\psi-\varphi}\le \frac{1}{1-r} \int_\Omega |v|^2_{i\partial\bar{\partial}\psi} e^{-\psi-\varphi}.
 \end{equation}
\end{theorem}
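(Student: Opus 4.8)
\textbf{Proof proposal for Proposition \ref{prop:BergmanIntegral}.}

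The plan is to follow the scheme of \cite{BYChen}, Proposition 6.1, realizing $K_\Omega(\cdot,w)$ on the smaller set $\{-\rho\le\varepsilon\}$ as (almost) a $\bar\partial$-exact object and then applying Theorem \ref{th:Berndtsson} with a weight built from $-\log(-\rho)$. First I would fix $w\in\Omega_a$ and a smooth cutoff $\chi$ with $\chi\equiv 1$ on $\{-\rho\ge 2\varepsilon\}$ and $\chi\equiv 0$ on $\{-\rho\le\varepsilon\}$, chosen so that $|\bar\partial\chi|\lesssim (-\rho)^{-1}$ on the annular region $A_\varepsilon:=\{\varepsilon<-\rho<2\varepsilon\}$; multiplying the Bergman kernel of the intermediate domain $\Omega_\varepsilon$ (or of $\Omega$ truncated) by $\chi$ gives a function agreeing with the kernel away from the boundary layer, at the cost of the $\bar\partial$-error $v:=K_{\Omega_\varepsilon}(\cdot,w)\,\bar\partial\chi$, supported in $A_\varepsilon$.

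Next I would choose the psh weights for Theorem \ref{th:Berndtsson}. Take $\psi:=-2\log(-\rho/a)$ on $\Omega_{\varepsilon}$ for a suitable normalization; a direct computation shows $\psi$ is psh (since $-\rho$ is a positive psh-type quantity one gets $i\partial\bar\partial(-\log(-\rho))\ge i\partial(-\log(-\rho))\wedge\bar\partial(-\log(-\rho))$, so after scaling the subordination inequality $ri\partial\bar\partial\psi\ge i\partial\psi\wedge\bar\partial\psi$ holds for any $r$ arbitrarily close to $1$, with the factor $2$ absorbed). On $A_\varepsilon$ one has $e^{-\psi}=(-\rho/a)^2\asymp(\varepsilon/a)^2$, and $|v|^2_{i\partial\bar\partial\psi}$ is controlled because $\bar\partial\chi$ is exactly of the size where $i\partial\bar\partial\psi$ dominates $i\partial\psi\wedge\bar\partial\psi\asymp|\bar\partial\chi|^2\,(\text{up to constants})$. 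Applying \eqref{eq:L2Minimal} with $\varphi\equiv 0$ (or a harmless auxiliary weight to keep things bounded) yields an $L^2$-minimal solution $u$ of $\bar\partial u=v$ on $\Omega_\varepsilon$ with
\begin{equation*}
\int_{\Omega_\varepsilon}|u|^2\,e^{-\psi}\le\frac{1}{1-r}\int_{A_\varepsilon}|v|^2_{i\partial\bar\partial\psi}\,e^{-\psi}\lesssim (\varepsilon/a)^{?}\int_{A_\varepsilon}|K_{\Omega_\varepsilon}(\cdot,w)|^2.
\end{equation*}
Then $F:=\chi K_{\Omega_\varepsilon}(\cdot,w)-u$ is holomorphic on $\Omega_\varepsilon$, reproduces (up to a controlled correction) the value at $w$, and its $L^2$ norm over $\{-\rho\le\varepsilon\}$ equals $\|u\|$ there, which by the weight $e^{-\psi}\asymp(\varepsilon/a)^2$ on that region converts the estimate into a gain of a power of $\varepsilon/a$. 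Comparing $F$ with the extremal property of $K_{\Omega_a}(w)$ — or rather bootstrapping: the quantity $\int_{-\rho\le\varepsilon}|K_\Omega(\cdot,w)|^2$ is monotone and the argument, iterated over a geometric sequence of scales or tuned once with the optimal exponent, gives exactly the bound $C_r K_{\Omega_a}(w)(\varepsilon/a)^r$ for any $r<1$, after relating $K_{\Omega_\varepsilon}(w)$ to $K_{\Omega_a}(w)$ by monotonicity of the Bergman kernel under shrinking domains.

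The main obstacle, as in \cite{BYChen}, is the bookkeeping that makes the powers of $\varepsilon/a$ come out to an arbitrary $r<1$ rather than some fixed fraction: one must balance the loss $1/(1-r)$ from Berndtsson's theorem against the gain $(\varepsilon/a)^{2}$ from the weight on the thin layer and the $(-\rho)^{-2}$ blow-up of $|\bar\partial\chi|^2$, and show that choosing $\psi$ with the right coefficient (and possibly iterating the estimate so the exponent accumulates) produces $(\varepsilon/a)^r$ for $r\to 1^-$. A secondary technical point is that $\Omega$ need not be bounded, so one applies Theorem \ref{th:Berndtsson} on the bounded subdomains $\Omega_\varepsilon$ and then passes to the limit, using that all constants are uniform in the exhaustion; and one must check that the psh function $-\log(-\rho)$ genuinely satisfies the gradient-subordination inequality, which is the standard fact that $-\log$ of a positive psh-superlevel quantity is psh with the required self-bounded gradient property. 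Everything else is routine: cutoff estimates, Cauchy–Schwarz, and the sub-mean-value inequality at $w$ to recover the reproducing value.
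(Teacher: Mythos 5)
Your proposal does not prove the statement it was supposed to prove. The assigned statement is Theorem \ref{th:Berndtsson}, i.e.\ Berndtsson's weighted estimate (\ref{eq:L2Minimal}) for the $L^2(\Omega,\varphi)$-minimal solution of $\bar{\partial}u=v$ under the self-bounded-gradient hypothesis $ri\partial\bar{\partial}\psi\ge i\partial\psi\wedge\bar{\partial}\psi$. What you have written is instead a sketch of Proposition \ref{prop:BergmanIntegral}, and the central step of that sketch is an \emph{application} of Theorem \ref{th:Berndtsson}. Relative to the assigned statement the argument is therefore circular: nothing in your text explains why the twisted estimate (\ref{eq:L2Minimal}) holds. (In the paper the theorem is quoted from \cite{BYChen}, Corollary 2.3, without proof, so there is no internal argument to compare against; but your proposal cannot stand in for one.) A genuine proof is of a different character: one uses that the minimal solution $u$ is orthogonal to $A^2(\Omega,e^{-\varphi})$, so that one may replace $ue^{-\psi}$ by $ue^{-\psi}$ minus its Bergman projection; one then observes $\bar{\partial}(ue^{-\psi})=e^{-\psi}v-ue^{-\psi}\bar{\partial}\psi$, applies H\"ormander's estimate (or the twisted Bochner--Kodaira inequality) in the weight $\varphi+\psi$, and uses Cauchy--Schwarz together with $i\partial\psi\wedge\bar{\partial}\psi\le r\,i\partial\bar{\partial}\psi$ to absorb the term containing $u$ into the left-hand side, which is exactly where the constant $\frac{1}{1-r}$ comes from. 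None of this appears in your proposal.

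Even judged as a sketch of Proposition \ref{prop:BergmanIntegral}, there is a concrete error worth flagging: you take $\psi:=-2\log(-\rho/a)$ and assert that the subordination inequality holds ``for any $r$ arbitrarily close to $1$, with the factor $2$ absorbed.'' It does not. For $\psi=-c\log(-\rho)$ one has $i\partial\bar{\partial}\psi\ge c^{-1}\,i\partial\psi\wedge\bar{\partial}\psi$, so the subordination constant equals $c$; with $c=2$ the hypothesis $0<r<1$ of Theorem \ref{th:Berndtsson} fails, and increasing the coefficient makes matters worse, not better. The paper's proof of Proposition \ref{prop:BergmanIntegral} takes $\psi=-r\log(-\rho)$ with $r<1$, accepts the resulting gain of only $(\varepsilon/a)^{r/2}$ from one application, and then iterates the estimate over the scales $(3/2)^k\varepsilon$ so that the accumulated exponent $r/2+r/4+\cdots$ tends to $r$, and finally lets $r\to 1$; your ``tuned once with the optimal exponent'' shortcut does not produce an arbitrary exponent below $1$.
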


Here $|v|^2_{i\partial\bar{\partial}\psi}$ should be understood as the infimum of non-negative locally bounded functions $H$ satisfying
 $
 i \bar{v}\wedge v\le H i\partial\bar{\partial}\psi
 $
as currents.

\begin{proof}[Proof of Proposition \ref{prop:BergmanIntegral}]
Assume first that $\Omega$ is bounded. Let $\kappa:{\mathbb R}\rightarrow [0,1]$ be a smooth cut-off function such that $\kappa|_{(-\infty,1]}=1$, $\kappa|_{[3/2,\infty)}=0$ and $|\kappa'|\le 2$. We then have
$$
     \int_{-\rho\le \varepsilon} |K_\Omega(\cdot,w)|^2 \le \int_\Omega \kappa(-\rho/\varepsilon) |K_\Omega(\cdot,w)|^2.
$$
 By the well-known property of the Bergman projection, we obtain
$$
  \int_\Omega \kappa(-\rho/\varepsilon) K_\Omega(\cdot,w)\cdot \overline{K_\Omega(\cdot,\zeta)}  = \kappa(-\rho(\zeta)/\varepsilon) K_\Omega(\zeta,w)-u(\zeta), \ \ \ \zeta\in\Omega,
$$
where $u$ is the $L^2(\Omega)-$minimal solution of the equation
$$
\bar{\partial} u= \bar{\partial}(\kappa(-\rho/\varepsilon) K_\Omega(\cdot,w))=:v.
$$
Since $\kappa(-\rho(w)/\varepsilon)=0$ provided $\frac32 \varepsilon\le a$ (i.e. $\varepsilon\le 2a/3$), so we have
\begin{equation}\label{eq:BergmanUpper}
 \int_{-\rho\le \varepsilon} |K_\Omega(\cdot,w)|^2\le -u(w).
\end{equation}
  Set
            $$
      \psi=- r\log (-\rho),\ \ \ 0<r<1.
      $$
      Clearly, $\psi$ is psh and satisfies $ri\partial\bar{\partial}\psi\ge i\partial \psi\wedge \bar{\partial}\psi$, so that
             $$
 i \bar{v}\wedge v\le C_0 r^{-1} |\kappa'(-\rho/\varepsilon)|^2 |K_\Omega(\cdot,w)|^2 i\partial\bar{\partial}\psi
 $$
 for some numerical constant $C_0>0$. Thus by Theorem \ref{th:Berndtsson} we obtain
       \begin{eqnarray*}
   \int_\Omega |u|^2e^{-\psi}
   & \le &  C_{r} \int_{\varepsilon\le -\rho\le \frac32\varepsilon} |K_\Omega(\cdot,w) |^2 e^{-\psi}\\
   & \le & C_{r} \varepsilon^{r}\int_{ -\rho\le \frac32\varepsilon} |K_\Omega(\cdot,w) |^2.
      \end{eqnarray*}
                   Since $e^{-\psi}\ge a^{r}$ on $\Omega_a$ and $u$ is holomorphic there, it follows that
    \begin{eqnarray*}
   |u(w)|^2 & \le & K_{\Omega_a}(w) \int_{\Omega_a}|u|^2\\
   & \le &  K_{\Omega_a}(w) a^{-r} \int_{\Omega}|u|^2 e^{-\psi}\\
   & \le & C_{r} K_{\Omega_a}(w) (\varepsilon/a)^{r}\int_{ -\rho\le \frac32\varepsilon} |K_{\Omega}(\cdot,w) |^2.
   \end{eqnarray*}
   Thus by (\ref{eq:BergmanUpper}), we obtain
   \begin{eqnarray*}
   \int_{-\rho\le \varepsilon} |K_\Omega(\cdot,w)|^2 \le C_{r}\, K_{\Omega_a}(w)^{1/2}  (\varepsilon/a)^{r/2}\left(\int_{ -\rho\le \frac32\varepsilon} |K_{\Omega}(\cdot,w) |^2 \right)^{1/2}.
   \end{eqnarray*}
   Notice that
   $$
   \int_{-\rho\le \frac32\varepsilon} |K_\Omega(\cdot,w) |^2 \le \int_\Omega |K_{\Omega}(\cdot,w) |^2 =K_{\Omega}(w)\le K_{\Omega_a}(w)
   $$
   provided $\frac32\varepsilon\le a$.
   Thus
   $$
   \int_{ -\rho\le\varepsilon} |K_\Omega(\cdot,w)|^2 \le C_{r}\, K_{\Omega_a}(w)(\varepsilon/a)^{r/2}.
   $$
   Replacing $\varepsilon$ by $\frac32\varepsilon$ in the argument above, we obtain
  \begin{eqnarray*}
   \int_{ -\rho\le \frac32\varepsilon} |K_\Omega(\cdot,w)|^2  & \le & C_r\,K_{\Omega_a}(w) (3/2)^{r/2} (\varepsilon/a)^{r/2}
   \end{eqnarray*}
   provided $(3/2)^2\varepsilon\le a$.
  Thus we may improve the upper bound by
  $$
   \int_{ -\rho\le\varepsilon} |K_\Omega(\cdot,w)|^2 \le C_r\,K_{\Omega_a}(w) (\varepsilon/a)^{r/2+r/4}.
   $$
   By induction, we conclude that for every $k\in {\mathbb Z}^+$,
  $$
   \int_{ -\rho\le\varepsilon} |K_\Omega(\cdot,w)|^2 \le C_{r,k}\, K_{\Omega_a}(w) (\varepsilon/a)^{r/2+r/4+\cdots+r/2^k}
   $$
   provided $(3/2)^k\varepsilon\le a$. Since $r/2+r/4+\cdots+r/2^k\rightarrow 1$ as $k\rightarrow \infty$ and $r\rightarrow 1$, we get the desired estimate under the assumption that $\Omega$ is bounded.

   In general, $\Omega$ may be exhausted by an increasing sequence $\{\Omega_j\}$ of bounded pseudoconvex domains. From the argument above we know that
   $$
  \int_{\Omega_j\cap \{-\rho\le \varepsilon\}} |K_{\Omega_j}(\cdot,w)|^2 \le C_{r}\, K_{\Omega_j\cap\Omega_a}(w) (\varepsilon/a)^{r}
  $$
   holds for all $j\gg 1$. Since $\Omega_j\uparrow \Omega$, it is well-known that $K_{\Omega_j}(\cdot,w)\rightarrow K_\Omega(\cdot,w)$ locally uniformly in $\Omega$ and $K_{\Omega_j\cap\Omega_a}(w)\rightarrow K_{\Omega_a}(w)$. It follows from Fatou's lemma that
   \begin{eqnarray*}
     \int_{-\rho\le \varepsilon} |K_\Omega(\cdot,w)|^2 & = & {\lim\inf}_{j\rightarrow \infty} \int_{\Omega_j\cap \{-\rho\le \varepsilon\}} |K_{\Omega_j}(\cdot,w)|^2 \\
     & \le & C_{r}\, K_{\Omega_a}(w) (\varepsilon/a)^{r}.
   \end{eqnarray*}
\end{proof}

\begin{remark}
  One of the referees kindly suggested an alternative proof as follows. Berndtsson-Charpentier \cite{BerndtssonCharpentier} showed that  if $\int_\Omega |f|^2 |\rho|^{-r}<\infty$ for some $0<r<1$, then
 $$
 \int_\Omega |P_\Omega(f)|^2 |\rho|^{-r}\le C_r \int_\Omega |f|^2 |\rho|^{-r}<\infty
 $$
 where $P_\Omega (f)(z):=\int_\Omega K_\Omega (z,\cdot) f(\cdot)$ is the Bergman projection. If one applies $f=\chi_{\Omega_a}K_{\Omega_a}(\cdot,w)$ where $\chi_{\Omega_a}$ denotes the characteristic function function on $\Omega_a$, then $K_\Omega(z,w)=P_\Omega(f)(z)$ and
 $$
 \int_\Omega |K_\Omega(\cdot,w)|^2 |\rho|^{-r} \le C_r \int_{\Omega_a} |K_{\Omega_a}(\cdot,w)|^2 |\rho|^{-r},
 $$
 from which the estimate $(\ref{eq:2.1})$ immediately follows.
\end{remark}

Let $\varrho$ be the relative extremal function of a (fixed) closed ball $\overline{B}\subset \Omega$. We have

  \begin{proposition}\label{prop:BerezinIntegral}
 Let $\Omega\subset {\mathbb C}^n$ be a bounded domain with $\alpha(\Omega)>0$.
  For every $0<r<1$, there exist constants $\varepsilon_r,C_r>0$ such that
  \begin{equation}\label{eq:BerezinBoundaryDecay}
  \int_{-\varrho\le \varepsilon} |K_\Omega(\cdot,w)|^2/K_\Omega(w)  \le C_{r}\, (\varepsilon/\mu(w))^{r}
  \end{equation}
  for all $\varepsilon\le \varepsilon_r \mu(w)$, where $\mu=|\varrho|(1+|\log|\varrho||)^{-1}$.
  \end{proposition}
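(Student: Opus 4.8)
The plan is to deduce the estimate from Proposition~\ref{prop:BergmanIntegral}, applied with $\rho=\varrho$, together with a comparison of $K_\Omega(w)$ with the Bergman kernel of a sublevel set of $\varrho$. Recall that a bounded domain with $\alpha(\Omega)>0$ is hyperconvex, hence pseudoconvex, and that $\varrho$ extends continuously to $\overline\Omega$ with $-1\le\varrho<0$, $\varrho\equiv-1$ on $\overline B$, and $\varrho=0$ on $\partial\Omega$; write $\Omega_t=\{-\varrho>t\}$. Fix $0<r<1$ and pick $r'$ with $r<r'<1$. Applying Proposition~\ref{prop:BergmanIntegral} to $(\Omega,\varrho)$ with exponent $r'$ (whose constants, by inspection of the proof, are independent of the parameter $a$), one obtains $\varepsilon_{r'},C_{r'}>0$ with
\[
\int_{-\varrho\le\varepsilon}|K_\Omega(\cdot,w)|^2\ \le\ C_{r'}\,K_{\Omega_a}(w)\,(\varepsilon/a)^{r'}\qquad(w\in\Omega_a,\ \varepsilon\le\varepsilon_{r'}a)
\]
for every $a>0$. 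When $w$ ranges over a fixed compact subset of $\Omega$ the asserted inequality is elementary: there $\mu(w)$ is bounded away from $0$ and $\infty$, and since $K_{\Omega_a}\to K_\Omega$ locally uniformly as $a\downarrow0$ one may fix a small $a_0$ with $K_{\Omega_{a_0}}\le 2K_\Omega$ on that set. So I may assume $-\varrho(w)$ small and take $a=\mu(w)$; note that then $\mu(w)<-\varrho(w)$, so $w\in\Omega_{\mu(w)}$, and moreover $w\in\Omega_{2\mu(w)}$ since $1+|\log|\varrho(w)||>2$. Dividing the displayed inequality by $K_\Omega(w)$ and replacing the exponent $r'$ by $r$ (harmless once $\varepsilon\le\mu(w)$, i.e. with $\varepsilon_r:=\min\{1,\varepsilon_{r'}\}$), the Proposition reduces to the uniform bound
\[
K_{\Omega_{\mu(w)}}(w)\ \le\ C_0\,K_\Omega(w).
\]

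To prove this comparison I would transplant the extremal function of $\Omega_{\mu(w)}$ at $w$ onto $\Omega$. Put $g:=K_{\Omega_{\mu(w)}}(\cdot,w)/\|K_{\Omega_{\mu(w)}}(\cdot,w)\|_{L^2(\Omega_{\mu(w)})}$, so that $g\in{\mathcal O}(\Omega_{\mu(w)})$, $\|g\|_{L^2(\Omega_{\mu(w)})}=1$, $|g(w)|^2=K_{\Omega_{\mu(w)}}(w)$. Choose $\eta\colon\mathbb R\to[0,1]$ smooth with $\eta|_{(-\infty,1]}=0$, $\eta|_{[2,\infty)}=1$, $|\eta'|\le2$, and set $\chi:=\eta(-\varrho/\mu(w))$; then $\chi g$ (extended by $0$) is Lipschitz on $\Omega$, equals $g$ near $w$, and $v:=\bar\partial(\chi g)$ is $\bar\partial$-closed and supported in the compact ``annulus'' $A_w:=\{\mu(w)\le-\varrho\le2\mu(w)\}\subset\Omega$. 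I then solve $\bar\partial u=v$ on $\Omega$ by Berndtsson's estimate (Theorem~\ref{th:Berndtsson}) with the weight $\varphi:=2n\,G_\Omega(\cdot,w)$, where $G_\Omega(\cdot,w)$ is the pluricomplex Green function of $\Omega$ with pole at $w$, and the auxiliary function $\psi:=-(1-\epsilon_0)\log(\mu(w)-\varrho)$ for a fixed $\epsilon_0\in(0,1)$: $\psi$ is continuous psh and satisfies $(1-\epsilon_0)i\partial\bar\partial\psi\ge i\partial\psi\wedge\bar\partial\psi$ with $i\partial\bar\partial\psi\gtrsim\mu(w)^{-2}i\partial\varrho\wedge\bar\partial\varrho$ on $A_w$, so that $|v|^2_{i\partial\bar\partial\psi}\lesssim|g|^2$ on $A_w$; while $e^{-\varphi}$ behaves like $|z-w|^{-2n}$ near $w$ (forcing $u(w)=0$) and $e^{\varphi}\le1$ everywhere. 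Then $F:=\chi g-u$ is holomorphic on $\Omega$ with $F(w)=g(w)$, hence $K_\Omega(w)\ge|g(w)|^2/\|F\|_{L^2(\Omega)}^2$, and the comparison follows once $\|F\|_{L^2(\Omega)}^2\le C_0$. Since $\|F\|^2\le2+2\|u\|^2$, $\|u\|_{L^2(\Omega)}^2\le(\sup_\Omega e^{\psi+\varphi})\|u\|_{L^2(\Omega,e^{-\psi-\varphi})}^2$ with $\sup_\Omega e^{\psi+\varphi}\le\mu(w)^{-(1-\epsilon_0)}$, and Theorem~\ref{th:Berndtsson} bounds $\|u\|_{L^2(\Omega,e^{-\psi-\varphi})}^2$ by a constant times $\mu(w)^{1-\epsilon_0}\big(\sup_{A_w}e^{-2nG_\Omega(\cdot,w)}\big)\int_{A_w}|g|^2$, the whole matter comes down to a uniform bound: there is $M_0>0$ with $-G_\Omega(z,w)\le M_0$ whenever $z\in A_w$ and $-\varrho(w)$ is small.

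This Green-function bound is the heart of the argument, and the logarithmic weight in $\mu$ is used precisely here. As $-G_\Omega(\cdot,w)$ is the negative of a maximal psh function on $\Omega\setminus\{w\}$ and vanishes on $\partial\Omega$, the comparison principle on $\{-\varrho<t\}$ (with $\mu(w)\le t<-\varrho(w)$, a region not meeting $w$) gives $-G_\Omega(z,w)\le\bigl(\sup_{\{-\varrho=t\}}-G_\Omega(\cdot,w)\bigr)(-\varrho(z))/t$ there, whence $\sup_{\{-\varrho=t'\}}(-G_\Omega(\cdot,w))\le(t'/t)\sup_{\{-\varrho=t\}}(-G_\Omega(\cdot,w))$ for $t'<t<-\varrho(w)$. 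Since each point of $A_w$ lies on a level set $\{-\varrho=t\}$ with $t\le2\mu(w)$, iterating this from $t=\tfrac12(-\varrho(w))$ down to $t=2\mu(w)$ yields
\[
\sup_{A_w}\bigl(-G_\Omega(\cdot,w)\bigr)\ \le\ \frac{4\mu(w)}{-\varrho(w)}\,\sup_{\{-\varrho=-\varrho(w)/2\}}\bigl(-G_\Omega(\cdot,w)\bigr)\ =\ \frac{4}{1+|\log|\varrho(w)||}\,\sup_{\{-\varrho=-\varrho(w)/2\}}\bigl(-G_\Omega(\cdot,w)\bigr),
\]
so $M_0<\infty$ as soon as $\sup_{\{-\varrho=-\varrho(w)/2\}}(-G_\Omega(\cdot,w))$ is bounded uniformly in $w$ — i.e. as soon as a point at half the $\varrho$-depth of $w$ lies at bounded pluricomplex Green distance from $w$. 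This last statement I would establish by Blocki's technique for estimating the pluricomplex Green function \cite{BlockiGreen} (the circle of ideas behind Theorem~\ref{th:Off-diagonal}) together with the inequality $-\varrho\le C\delta^\alpha$, and I expect it to be the main obstacle of the proof; everything else is bookkeeping.
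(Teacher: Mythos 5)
Your plan shares its skeleton with the paper's: apply Proposition~\ref{prop:BergmanIntegral} with $a$ proportional to $\mu(w)$ and then control the ratio $K_{\Omega_a}(w)/K_\Omega(w)$ uniformly. The difference is how the kernel comparison is obtained. The paper cites the known inequality $K_{\{g_\Omega(\cdot,w)<-1\}}(w)\le C_n K_\Omega(w)$ (from \cite{HerbortHyperconvex}, \cite{Chen99}) and then shows (Proposition~\ref{prop:GreenLowerEstimate}) that $\{g_\Omega(\cdot,w)<-1\}\subset\Omega_{C^{-1}\mu(w)}$, so monotonicity of the Bergman kernel does the rest. You instead prove $K_{\Omega_{\mu(w)}}(w)\le C_0K_\Omega(w)$ directly by a Donnelly--Fefferman/Berndtsson $\bar\partial$-extension with weight $e^{-2nG_\Omega(\cdot,w)}$; this is in substance a re-derivation of the cited black-box, and it requires a uniform bound on $-G_\Omega(\cdot,w)$ on the annulus $A_w=\{\mu(w)\le-\varrho\le2\mu(w)\}$. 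Your $\psi$, your self-bounded-gradient check, and the bookkeeping in the weighted $L^2$ estimate are all fine, so the approach is viable.

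The issue is in how you propose to get the Green-function bound. Your comparison-principle step is correct: since $M(t)\varrho/t$ is psh and $G_\Omega(\cdot,w)$ is maximal on $\{-\varrho<t\}$ whenever $t<-\varrho(w)$, one gets $\sup_{A_w}(-G_\Omega(\cdot,w))\le\frac{4\mu(w)}{-\varrho(w)}\,\sup_{\{-\varrho=-\varrho(w)/2\}}(-G_\Omega(\cdot,w))$ (with a single application from $t=-\varrho(w)/2$; no iteration is needed, and one cannot instead start from a fixed level $t_0$ because then $w$ would lie inside $\{-\varrho<t_0\}$ and maximality fails). But you then assert that it suffices to show $\sup_{\{-\varrho=-\varrho(w)/2\}}(-G_\Omega(\cdot,w))$ is bounded uniformly in $w$, and defer this to ``Blocki's technique.'' That claim is stronger than what the available estimates give: the paper's (\ref{eq:GreenLowerBound}) (which is essentially the Blocki-type bound built from the quasi-H\"older continuity Lemma~\ref{lm:Green_Holder2}) yields only $\sup_{\{-\varrho=-\varrho(w)/2\}}(-G_\Omega(\cdot,w))\le C\,|\log|\varrho(w)||$, growing logarithmically as $w\to\partial\Omega$. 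That is still enough, because the prefactor $\frac{4\mu(w)}{-\varrho(w)}=\frac{4}{1+|\log|\varrho(w)||}$ you already produced exactly cancels the logarithm — indeed, the logarithmic correction in $\mu$ exists precisely to absorb this — but you should replace ``bounded uniformly'' by the logarithmic bound and invoke (\ref{eq:GreenLowerBound}) to get it. With that one correction the proof closes; otherwise you are leaning on a statement that is not established and, as far as I can see, need not hold in general.
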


  In order to prove this proposition, we need an elementary estimate of the pluricomplex Green function.   Recall  that the {\it pluricomplex Green function} $g_\Omega(z,w)$ of a domain $\Omega\subset {\mathbb C}^n$ is defined as
  $$
  g_\Omega(z,w)=\sup\left\{u(z):u\in PSH^-(\Omega), u(z)\le \log |z-w|+O(1)\ {\rm near\ }w\right\}.
    $$

    We first show the following quasi-H\"older-continuity of $\varrho$:

  \begin{lemma}\label{lm:Green_Holder2}
   Let $\Omega\subset {\mathbb C}^n$ be a bounded domain with $\alpha(\Omega)>0$.
 For every $r>1$ and $0<\alpha<\alpha(\Omega)$, there exists a constant $C>0$ such that
\begin{equation}\label{eq:GreenHolder2}
\varrho(z_2)\ge r\varrho(z_1)-C |z_1-z_2|^\alpha,\ \ \ z_1,z_2\in \Omega.
\end{equation}
\end{lemma}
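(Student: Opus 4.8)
The plan is to prove the estimate by exhibiting, for any $z_1,z_2\in\Omega$ with $d:=|z_1-z_2|$ small, an explicit member of the family defining $\varrho$ --- essentially a tiny translate of $r\varrho$ --- whose value at $z_2$ is at least $r\varrho(z_1)-Cd^\alpha$; the factor $r>1$ will serve to neutralize the merely qualitative continuity of $\varrho$ near the fixed ball $\overline B$. First one reduces to $d$ below any prescribed threshold: since $-1\le\varrho\le 0$ on $\Omega$, if $|z_1-z_2|\ge d_0$ then $r\varrho(z_1)-Cd^\alpha\le -Cd_0^\alpha\le -1\le\varrho(z_2)$ as soon as $C\ge d_0^{-\alpha}$. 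So fix $d$ small enough that $\overline B+(z_1-z_2)$ lies inside a fixed compact neighborhood $K\Subset\Omega$ of $\overline B$, and recall the two ingredients noted in the introduction: $\varrho$ is continuous on $\overline\Omega$, and $-\varrho\le C_\alpha\delta^\alpha$ on $\Omega$ for a constant $C_\alpha$ depending only on the fixed $\alpha<\alpha(\Omega)$ ($\delta$ being the Euclidean boundary distance).

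Put $a:=z_1-z_2$, write $\Omega-a:=\{z:z+a\in\Omega\}$, and let $\omega$ be a modulus of uniform continuity of $\varrho$ on $K$. For a constant $C_1=C_1(\alpha,r)$ to be chosen, set
$$
u(z):=\max\bigl\{\,r\varrho(z),\ r\varrho(z+a)-C_1d^\alpha\,\bigr\}\ \text{ on }\Omega\cap(\Omega-a),\qquad u(z):=r\varrho(z)\ \text{ on }\Omega\setminus(\Omega-a).
$$
The plan is then to verify that $u\in PSH^-(\Omega)$ and $u\le -1$ on $\overline B$; granting this, $u$ is admissible in the definition of $\varrho=\varrho_{\overline B}$, so $\varrho\ge u$ on $\Omega$, and since $z_2+a=z_1\in\Omega$ we get $\varrho(z_2)\ge u(z_2)\ge r\varrho(z_1)-C_1d^\alpha$, which is the assertion with $C:=\max\{C_1,d_0^{-\alpha}\}$ (the second term handling $|z_1-z_2|\ge d_0$).

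For plurisubharmonicity: on the open set $\Omega\cap(\Omega-a)$, $u$ is a maximum of two psh functions (the second because $z\mapsto z+a$ is holomorphic), and $u=r\varrho$ on the interior of $\Omega\setminus(\Omega-a)$; the only issue is the gluing along the set where $z+a\in\partial\Omega$. At such a point $z_0$ one has $\delta(z_0)\le|a|=d$, hence $\delta<2d$ and $-\varrho<2^\alpha C_\alpha d^\alpha$ on a small neighborhood, while $r\varrho(z+a)-C_1d^\alpha\le -C_1d^\alpha$ there; choosing $C_1\ge 2^\alpha rC_\alpha$ makes $u=r\varrho$ near $z_0$, so $u$ is psh on all of $\Omega$, and $u<0$ since it is a maximum of negative functions. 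As for the boundary condition, on $\overline B$ we have $r\varrho\equiv -r\le -1$, and for $z\in\overline B$ the point $z+a$ lies within distance $d$ of $\overline B$ inside $K$, so $\varrho(z+a)\le -1+\omega(d)$ and $r\varrho(z+a)-C_1d^\alpha\le -r+r\omega(d)$, which is $\le -1$ once $r\omega(d)\le r-1$, i.e. for $d$ below a threshold $d_0=d_0(r)$ (using $\omega(d)\to 0$). Thus $u\le -1$ on $\overline B$, and the construction is complete.

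The main obstacle is the boundary condition $u\le -1$ on $\overline B$: a bare translate $\varrho(\cdot+a)$ exceeds $-1$ near $\overline B$ by an amount we can only bound by the modulus of continuity $\omega(d)$, and no genuine two-sided H\"older estimate for $\varrho$ is available from the hypothesis $\alpha(\Omega)>0$ alone --- such a bound would bring in the Diederich-Fornaess index. Multiplying by $r>1$ creates a fixed buffer of size $r-1$ that absorbs $\omega(d)$ for small $d$, which is exactly why the conclusion is only ``quasi''-H\"older (and why the constant $C$ must blow up as $r\to 1^+$). The gluing step in the plurisubharmonicity check is a minor technical point: it works only because the translate can leave $\Omega$ solely near $\partial\Omega$, where $-\varrho$ is already of order $d^\alpha$.
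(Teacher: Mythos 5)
Your proof is correct and follows essentially the same Walsh-type gluing argument as the paper: translate $\Omega$, form a max of $r\varrho$ with a shifted copy minus $Cd^\alpha$, and use the factor $r>1$ together with the uniform continuity of $\varrho$ on $\overline\Omega$ to verify admissibility on $\overline B$. The only cosmetic difference is that you glue $r\varrho$ on the complement of the translated domain where the paper glues $\varrho$, and you verify the domination in a neighborhood of $\partial(\Omega-a)$ rather than only on the boundary itself; both are equivalent in substance.
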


\begin{proof}
Choose $\rho\in C(\Omega)\cap PSH^-(\Omega)$ with $-\rho\le C_\alpha\delta^\alpha$. Clearly we have
$$
\varrho(z)\ge \frac{\rho(z)}{\inf_{\overline{B}}|\rho|}\ge - C_\alpha\delta^\alpha.
$$
 To get (\ref{eq:GreenHolder2}), we employ a well-known technique of Walsh \cite{Walsh} as follows.
 Set $\varepsilon:=|z_1-z_2|$, ${\Omega}':=\Omega-(z_1-z_2)$ and
 $$
u(z)=\left\{
\begin{array}{ll}
 \varrho (z) & {\rm if\ } z\in \Omega\backslash {\Omega}'\\
 \max\left\{ \varrho (z), r\varrho(z+z_1-z_2)-C\varepsilon^\alpha\right\} & {\rm if\ } z\in \Omega\cap {\Omega}'.
\end{array}
\right.
$$
 We claim that $u\in PSH^-(\Omega)$ provided $C\gg 1$. Indeed, if $z\in \Omega\cap \partial {\Omega}'$ then $\delta(z)\le \varepsilon$, so that
 $$
  \varrho (z)\ge -C_\alpha\delta(z)^\alpha\ge -C_\alpha\varepsilon^\alpha\ge r\varrho(z+z_1-z_2)-C_\alpha\varepsilon^\alpha.
  $$
 Moreover, if $\varepsilon\le \varepsilon_r\ll1$ then $\varrho(z+z_1-z_2)\le -1/r$ for $z\in \overline{B}$ since $\varrho$ is continuous on $\overline{\Omega}$. Thus $u|_{\overline{B}}\le -1$. Since $z_2=z_1-(z_1-z_2)\in \Omega\cap {\Omega}'$, it follows that
  $$
  \varrho(z_2)\ge u(z_2)\ge r\varrho(z_1)-C_\alpha\varepsilon^\alpha.
  $$
  If $\varepsilon=|z_1-z_2|> \varepsilon_r$, then $(\ref{eq:GreenHolder2})$ trivially holds.
 \end{proof}

 \begin{remark}
  It is not known whether $\varrho$ is H\"older continuous on $\overline{\Omega}$. The answer is positive if $n=1$ $($see \cite{CarlesonGamelin}, p.\,138$)$.
 \end{remark}

 \begin{proposition}\label{prop:GreenLowerEstimate}
Let\/ $\Omega\subset {\mathbb C}^n$ be a bounded domain with $\alpha(\Omega)>0$. There exists a constant $C\gg 1$ such that
\begin{equation}\label{eq:GreenLowerEstimate}
\{g_\Omega(\cdot,w)<-1\}\subset \{\varrho< -C^{-1}\mu(w)\},\ \ \ w\in \Omega.
\end{equation}
\end{proposition}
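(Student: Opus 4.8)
The plan is to prove the quantitative bound
\[
-g_\Omega(z,w)\ \le\ \frac{C\,|\varrho(z)|}{\mu(w)}\,,\qquad z\in\Omega\setminus\overline{B_w}\,,
\]
for a suitable small closed ball $\overline{B_w}\ni w$; then $(\ref{eq:GreenLowerEstimate})$ drops out, because $g_\Omega(z,w)<-1$ forces either $z\in\overline{B_w}$, where $\varrho(z)$ is already far from $0$ in terms of $\varrho(w)$, or, by the displayed inequality, $|\varrho(z)|>C^{-1}\mu(w)$. The two ingredients are the maximality of $g_\Omega(\cdot,w)$ on $\Omega\setminus\{w\}$ and the quasi-H\"older continuity of $\varrho$ from Lemma \ref{lm:Green_Holder2}.

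First I would fix $w$ and apply Lemma \ref{lm:Green_Holder2} (with, say, $r=3/2$ and a fixed $0<\alpha<\alpha(\Omega)$) to produce a radius $\rho_w$, comparable to $|\varrho(w)|^{1/\alpha}$, such that $B_w:=B(w,\rho_w)$ satisfies $\overline{B_w}\subset\{\varrho\le \varrho(w)/2\}\subset\subset\Omega$; in particular $-\varrho\ge |\varrho(w)|/2$ on $\overline{B_w}$. Since $\Omega\subset B(w,R)$ with $R:=2\operatorname{diam}\Omega$, monotonicity of the pluricomplex Green function under enlargement of the domain gives $g_\Omega(\cdot,w)\ge g_{B(w,R)}(\cdot,w)=\log(|\cdot-w|/R)$, and hence $-g_\Omega(\cdot,w)\le \log(R/\rho_w)\le C_1\,(1+|\log|\varrho(w)||)$ on $\partial B_w$, the last inequality following from $\rho_w\asymp|\varrho(w)|^{1/\alpha}$ and $|\varrho(w)|\le 1$, with $C_1$ depending only on $\Omega$, $\overline B$ and $\alpha$.

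Now put $N:=\sup_{\partial B_w}(-g_\Omega(\cdot,w))/(-\varrho)$, so that $N\le 2C_1(1+|\log|\varrho(w)||)/|\varrho(w)|=2C_1/\mu(w)$. The function $v:=N\varrho$ is plurisubharmonic and bounded on $U:=\Omega\setminus\overline{B_w}$; it satisfies $v\le g_\Omega(\cdot,w)$ on $\partial B_w$ by the definition of $N$, and $\limsup_{U\ni\zeta\to\xi}\bigl(v(\zeta)-g_\Omega(\zeta,w)\bigr)=0$ for every $\xi\in\partial\Omega$, because $\varrho\in C(\overline\Omega)$ with $\varrho|_{\partial\Omega}=0$ and, $\Omega$ being hyperconvex (recall $\alpha(\Omega)>0$), $g_\Omega(\cdot,w)\to 0$ at $\partial\Omega$. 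Since $g_\Omega(\cdot,w)$ is maximal on $\Omega\setminus\{w\}\supset U$, the domination principle for maximal plurisubharmonic functions gives $v\le g_\Omega(\cdot,w)$ on $U$, which is exactly the displayed bound with $C=2C_1$. Together with the elementary estimate $\varrho(z)\le\varrho(w)/2<-C^{-1}\mu(w)$, valid on $\overline{B_w}$ once $C>2$, this yields $(\ref{eq:GreenLowerEstimate})$ with $C:=\max\{2C_1,3\}$.

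The step demanding the most care is the domination argument: one must use both that $g_\Omega(\cdot,w)$ is genuinely maximal on $\Omega\setminus\{w\}$ and that its boundary behaviour dominates that of $v$ along all of $\partial U=\partial\Omega\cup\partial B_w$; the control along $\partial\Omega$ is precisely where hyperconvexity of $\Omega$ and continuity of $\varrho$ up to $\overline\Omega$ enter. Everything else is bookkeeping with the explicit radius $\rho_w$ and the constants, together with a check that $\rho_w$ is well defined for every $w$ — which it is, since $\overline{B_w}\subset\{\varrho<0\}=\Omega$ automatically forces $\rho_w\le\delta(w)$.
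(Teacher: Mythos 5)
Your argument is correct but takes a genuinely different route from the paper's. Both proofs hinge on the same quantitative fact from Lemma~\ref{lm:Green_Holder2}: near $w$, the sublevel set $\{\varrho\le\varrho(w)/2\}$ contains a ball of radius $\asymp|\varrho(w)|^{1/\alpha}$, so that $\log\frac{|\cdot-w|}{R}$ is already of size $\asymp -(1+|\log|\varrho(w)||)$ there. From that point on the strategies diverge. The paper \emph{constructs an explicit competitor} $\psi$ in the extremal envelope defining $g_\Omega(\cdot,w)$: it glues $\log\frac{|z-w|}{R}$ on $\{\varrho\le\varrho(w)/2\}$ with the rescaled relative extremal function $2C_2\bigl(\varrho(w)^{-1}\log|\varrho(w)|\bigr)\varrho(z)$ on the complement, checks that the gluing is plurisubharmonic via the inequality $\log\frac{|z-w|}{R}\ge C_2\log|\varrho(w)|$ on the interface, and then reads off the lower bound $g_\Omega\ge\psi$. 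You, by contrast, invoke the \emph{maximality} of $g_\Omega(\cdot,w)$ on $\Omega\setminus\{w\}$ and a domination/comparison principle on the annular region $U=\Omega\setminus\overline{B_w}$ to push the boundary inequality $N\varrho\le g_\Omega(\cdot,w)$ (on $\partial B_w$ and asymptotically at $\partial\Omega$) into the interior. That is a valid argument, but it rests on two pieces of Monge--Amp\`ere machinery that the paper deliberately sidesteps: Demailly's theorem that $(dd^c g_\Omega(\cdot,w))^n=(2\pi)^n\delta_w$ on a bounded hyperconvex domain (hence $g_\Omega(\cdot,w)$ is maximal off $w$), and the global domination principle for maximal bounded plurisubharmonic functions. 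The paper's construction is softer: it only uses that $g_\Omega$ is a supremum and that the piecewise definition of $\psi$ is plurisubharmonic. In terms of what each approach buys, your version is conceptually cleaner (one inequality, one comparison) and generalizes readily whenever maximality of the Green function is available; the paper's version is more elementary and self-contained, and the intermediate object $\psi$ is reused (in the form of inequality \eqref{eq:GreenLowerBound}) later in the proof of Proposition~\ref{prop:GreenEstimate2}, which your approach would have to reproduce separately. One small point to tighten: when you write that $\overline{B_w}\subset\{\varrho<0\}=\Omega$ ``automatically'' bounds $\rho_w\le\delta(w)$, it is cleaner to argue directly from $-\varrho\le C_\alpha\delta^\alpha$, which gives $\delta(w)\ge(|\varrho(w)|/C_\alpha)^{1/\alpha}\gtrsim\rho_w$ without any circularity.
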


\begin{proof}
  Fix $0<\alpha<\alpha(\Omega)$. We have $-\varrho\le C_\alpha \delta^\alpha$ for some constant $C_\alpha>0$. Clearly, it suffices to consider the case when $|\varrho(w)|\le 1/2$.  Applying Lemma \ref{lm:Green_Holder2} with $r=3/2$, we see that if $\varrho(z)=\varrho(w)/2$ then
 $$
 C_1|z-w|^\alpha\ge \frac32\varrho(z)-\varrho(w)=-\frac14 \varrho(w),
 $$
 so that
 $$
 \log\frac{|z-w|}{R}\ge \frac1{\alpha}\log|\varrho(w)|/(4C_1)-\log R\ge C_2\log|\varrho(w)|
 $$
 for some constant $C_2\gg 1$.
  It follows that
$$
\psi(z):=\left\{
\begin{array}{ll}
 \log\frac{|z-w|}{R} & {\rm if\ } \varrho(z)\le \varrho(w)/2\\
 \max\left\{\log\frac{|z-w|}{R},2C_2(\varrho(w)^{-1}\log|\varrho(w)|)\varrho(z)\right\} & {\rm otherwise}.
\end{array}
\right.
$$
is a well-defined negative psh function on $\Omega$ with a logarithmic pole at $w$, and if $\varrho(z)\ge \varrho(w)/2$, then
\begin{equation}\label{eq:GreenLowerBound}
g_\Omega(z,w)\ge \psi(z)\ge 2C_2(\varrho(w)^{-1}\log|\varrho(w)|)\varrho(z).
\end{equation}
Thus
$$
\{g_\Omega(\cdot,w)<-1\}\cap \{\varrho\ge \varrho(w)/2\}\subset \{\varrho< -C^{-1}\mu(w)\}
$$
provided $C\gg 1$. Since $\{\varrho< \varrho(w)/2\}\subset \{\varrho< -C^{-1}\mu(w)\}$ if $C\gg 1$, we conclude the proof.
\end{proof}

  \begin{proof}[Proof of Proposition \ref{prop:BerezinIntegral}]
  Set $A_w:=\{g_\Omega(\cdot,w)<-1\}$. It is known from \cite{HerbortHyperconvex} or \cite{Chen99} that
\begin{equation}\label{eq:BergmanCompare}
K_{A_w}(w)\le C_n K_\Omega(w).
\end{equation}
By Proposition \ref{prop:GreenLowerEstimate}, we have
\begin{equation}\label{eq:GreenLevel}
A_w\subset \Omega_{a(w)}:=\{\varrho< -a(w)\}
\end{equation}
where $a(w):=C^{-1}\mu(w)$ with $C\gg 1$.  If we choose $\rho=\varrho$ in Proposition \ref{prop:BergmanIntegral}, it follows that for every $\varepsilon\le \varepsilon_r a(w)$,
  \begin{eqnarray}\label{eq:BergmanIntegral}
  \int_{-\varrho\le \varepsilon} |K_\Omega(\cdot,w)|^2 & \le & C_{r}\, K_{\Omega_{a(w)}}(w) (\varepsilon/a(w))^{r}\nonumber\\
  & \le & C_{n,r}\, K_\Omega(w) (\varepsilon/a(w))^{r}
  \end{eqnarray}
   in view of (\ref{eq:BergmanCompare}), (\ref{eq:GreenLevel}).
   \end{proof}
 \section{$L^p-$integrability of the Bergman kernel}

\begin{proof}[Proof of Theorem \ref{th:Main}]
 Without loss of generality, we may assume $\alpha(\Omega)>0$. For every $0<\alpha<\alpha(\Omega)$, we may choose $\rho\in PSH^-(\Omega)$ such that
$$
-\rho\le C_\alpha \delta^\alpha
$$
for some constant $C_\alpha>0$. Let $S$ be a compact set in $\Omega$ and let $w\in S$. By virtue of Proposition \ref{prop:BergmanIntegral}, we conclude that for every $0<r<1$,
$$
 \int_{-\rho\le\varepsilon} |K_{\Omega}(\cdot,w)|^2 \le C \varepsilon^r
$$
 where $C=C(n,r,\alpha,S)>0$. Since $\{\delta\le \varepsilon\}\subset \{-\rho\le C_\alpha\varepsilon^\alpha\}$, it follows that
$$
\int_{ \delta \le \varepsilon}|K_{\Omega}(\cdot,w)|^2 \le C\varepsilon^{r\alpha}.
$$
 Since $|\delta(\zeta)-\delta(z)|\le |\zeta-z|$, we have $B(z,\delta(z))\subset \{\delta\le 2\delta(z)\}$. By the mean value inequality, we get
\begin{equation}\label{eq:upper}
|K_\Omega(z,w)|^2\le C_n \delta(z)^{-2n} \int_{\delta\le 2\delta(z)}|K_{\Omega}(\cdot,w)|^2 \le C \delta(z)^{r\alpha-2n}.
\end{equation}
Thus for every $\tau>0$ we have
\begin{eqnarray*}
\int_\Omega |K_{\Omega}(\cdot,w)|^{2+\tau} & = & \int_{\delta>1/2} |K_{\Omega}(\cdot,w)|^{2+\tau} +\sum_{k=1}^\infty \int_{2^{-k-1}<\delta\le 2^{-k}} |K_{\Omega}(\cdot,w)|^{2+\tau}\\
& \le & C\, 2^{n\tau}\int_\Omega |K_{\Omega}(\cdot,w)|^{2}+C\,\sum_{k=1}^\infty2^{(k+1)\tau(n-r\alpha/2)}\int_{\delta\le 2^{-k}} |K_{\Omega}(\cdot,w)|^{2}\\
& \le & C  +C\, 2^{\tau(n-r\alpha/2)} \sum_{k=1}^\infty 2^{-k(r\alpha+\tau(r\alpha/2-n))}\\
& < & \infty
\end{eqnarray*}
provided $\tau<\frac{2r\alpha}{2n-r\alpha}$. Since $r$ and $\alpha$ can be arbitrarily close to $1$ and $\alpha(\Omega)$ respectively, we conclude the proof of the first statement.

Since $\{\delta\le \varepsilon\}\subset \{-\varrho\le C_\alpha \varepsilon^\alpha\}$, it follows from Proposition \ref{prop:BerezinIntegral} that
\begin{equation}\label{eq:BerezinBoundaryDecay2}
\int_{\delta\le \varepsilon} |K_\Omega(\cdot,w)|^2/K_\Omega(w) \le C_{\alpha,r}(\varepsilon^\alpha/\mu(w))^r
\end{equation}
provided $\varepsilon^\alpha/\mu(w)\le \varepsilon_r\ll1$.
For every $z\in \Omega$, we have
\begin{equation}\label{eq:Pointwise1}
|K_\Omega(z,w)|^2/K_\Omega(w)\le K_\Omega(z)\le C_n \delta(z)^{-2n},
\end{equation}
and if $(2\delta(z))^\alpha\le \varepsilon_r \mu(w)$,
\begin{eqnarray}\label{eq:Pointwise2}
|K_\Omega(z,w)|^2 & \le & C_n \delta(z)^{-2n} \int_{\delta\le 2\delta(z)}|K_{\Omega}(\cdot,w)|^2\nonumber\\
                  & \le & C_{\alpha,r} K_\Omega(w)\mu(w)^{-r}\delta(z)^{\alpha r-2n}.
\end{eqnarray}
For every $\tau<\frac{2r\alpha}{2n-r\alpha}$, we conclude from (\ref{eq:Pointwise1}) that
\begin{eqnarray}\label{eq:Berezin1}
 && \int_{2\delta\ge (\epsilon_r \mu(w))^{1/\alpha}} |K_\Omega(\cdot,w)|^{2+\tau}\nonumber\\
  & \le & C_n K_\Omega(w)^{\tau/2} \int_{2\delta\ge (\epsilon_r \mu(w))^{1/\alpha}} |K_\Omega(\cdot,w)|^2\delta^{-n\tau}\nonumber\\
  & \le & C_{\alpha,r} \frac{K_\Omega(w)^{\tau/2}}{\mu(w)^{n\tau/\alpha}}\int_\Omega |K_\Omega(\cdot,w)|^2\nonumber\\
& \le & C_{\alpha,r} \frac{K_\Omega(w)^{1+\tau/2}}{\mu(w)^{n\tau/\alpha}}.
\end{eqnarray}
Now choose $k_w\in {\mathbb Z}^+$ such that $(\epsilon_r \mu(w))^{1/\alpha}\in (2^{-k_w-1},2^{-k_w}]$ (it suffices to consider the case when $\mu(w)$ is sufficiently small). We then have
\begin{eqnarray}\label{eq:Berezin2}
 && \int_{2\delta< (\epsilon_r \mu(w))^{1/\alpha}} |K_\Omega(\cdot,w)|^{2+\tau}\nonumber\\
  & \le & \sum_{k=k_w}^\infty \int_{2^{-k-1}<\delta\le 2^{-k}}
 |K_\Omega(\cdot,w)|^{2+\tau}\nonumber\\
& \le & C_{\alpha,r,\tau} \frac{K_\Omega(w)^{\tau/2}}{\mu(w)^{\tau r/2}}\sum_{k=k_w}^\infty  2^{k\tau(n-r\alpha/2)}\int_{\delta\le 2^{-k}} |K_{\Omega}(\cdot,w)|^{2}\ \ \ ({\rm by\ }(\ref{eq:Pointwise2}))\nonumber\\
& \le & C_{\alpha,r,\tau}\frac{K_\Omega(w)^{1+\tau/2}}{\mu(w)^{r(1+\tau /2)}}\sum_{k=k_w}^\infty   2^{-k(r\alpha+\tau(r\alpha/2-n))}  \ \ \ ({\rm by\ }(\ref{eq:BerezinBoundaryDecay2}))\nonumber\\
& \le & C_{\alpha,r,\tau}\frac{K_\Omega(w)^{1+\tau/2}}{\mu(w)^{r(1+\tau /2)}}\mu(w)^{(r\alpha+\tau(r\alpha/2-n))/\alpha}\nonumber\\
& \le & C_{\alpha,r,\tau} \frac{K_\Omega(w)^{1+\tau/2}}{\mu(w)^{\tau n/\alpha}}.
\end{eqnarray}
By (\ref{eq:Berezin1}) and (\ref{eq:Berezin2}), (\ref{eq:BerezinKernel}) immediately follows.
\end{proof}

 \begin{proof}[Proof of Theorem \ref{th:OneDimension}]
It suffices to use the following lemma instead of (\ref{eq:upper}) in the proof of the first statement in Theorem \ref{th:Main}.
\end{proof}

\begin{lemma}\label{lm:upper_bound}
 Let $\Omega$ be a domain in ${\mathbb C}$. For every compact set $S\subset \Omega$ and $\alpha<\alpha(\Omega)$, there exists a constant $C>0$ such that
 $$
 |K_\Omega(z,w)|\le C\delta(z)^{\alpha-1},\ \ \ z\in \Omega,\,w\in S.
 $$
 \end{lemma}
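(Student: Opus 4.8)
The plan is to sidestep the mean-value inequality that produced (\ref{eq:upper}). In one variable that route only yields $|K_\Omega(z,w)|\le C\,\delta(z)^{r\alpha/2-1}$, and it cannot be pushed to the exponent $\alpha-1$ — it already fails to reproduce the (bounded) behaviour of $|K_{\mathbb D}(\cdot,w)|$ for fixed $w$. Instead I would use the classical identity
\[
K_\Omega(z,w)=\frac{2}{\pi}\,\frac{\partial^2 g_\Omega(z,w)}{\partial z\,\partial\bar w},
\]
valid for any planar domain possessing a Green function (and $\alpha(\Omega)>0$ forces $\mathbb{C}\setminus\Omega$ to be non-polar, so $g_\Omega$ exists; one may reduce to the finitely connected case by exhaustion if desired). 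The decisive point is that only \emph{one} of the two derivatives falls on the variable $z$ that runs to $\partial\Omega$: a decay estimate $|g_\Omega(\cdot,w)|\le C\,\delta^\alpha$ loses exactly one power of $\delta(z)$ under a Cauchy-type estimate in $z$, while the $\bar w$-derivative, with $w$ confined to the fixed compact set $S$, costs only a constant. This yields $|K_\Omega(z,w)|\le C\,\delta(z)^{\alpha-1}$.

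Step 1 (boundary decay of the Green function, uniform in the pole). I would first show: for $0<\alpha<\alpha(\Omega)$ and any compact $S\subset\Omega$ there is $C_S>0$ with $-g_\Omega(z,w)\le C_S\,\delta(z)^\alpha$ for all $w\in S$ and all $z\in\Omega$ with $\delta(z)$ small. Fix a negative continuous psh $\varrho$ (the relative extremal function works) with $-\varrho\le C_\alpha\delta^\alpha$, let $r_1:=\frac12\operatorname{dist}(S,\partial\Omega)$, and for $w\in S$ put
\[
\lambda(w):=\frac{\max_{|\zeta-w|=r_1}\bigl(-g_\Omega(\zeta,w)\bigr)}{\min_{|\zeta-w|=r_1}\bigl(-\varrho(\zeta)\bigr)}.
\]
By joint continuity of $g_\Omega$ off the diagonal and of $\varrho$, and by compactness, $\lambda(w)\le\Lambda_S<\infty$. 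On $\Omega\setminus\overline{B(w,r_1)}$ the function $g_\Omega(\cdot,w)-\lambda(w)\varrho$ is superharmonic (harmonic minus subharmonic), it is $\ge 0$ on $\partial B(w,r_1)$ by the choice of $\lambda(w)$, and its $\liminf$ at $\partial\Omega$ is $\ge 0$ since both $g_\Omega(\cdot,w)$ and $\varrho$ vanish there; the minimum principle gives $g_\Omega(\cdot,w)\ge\lambda(w)\varrho$, i.e. $-g_\Omega(z,w)\le\Lambda_S C_\alpha\,\delta(z)^\alpha$. Excising the ball around $w$ is exactly what avoids the logarithmic pole, and for $z$ bounded away from $\partial\Omega$ there is nothing to prove.

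Step 2 (two gradient estimates). Enlarge $S$ to a compact $S'$ with $\sigma:=\operatorname{dist}(S,\mathbb{C}\setminus S')>0$ and apply Step 1 to $S'$. Off the diagonal $g_\Omega$ is real-analytic and harmonic in $z$ and in $w$ separately, hence so are all its derivatives. Fix $w_0\in S$ and $z$ with $\delta(z)$ small, so $z$ lies far from $S'$. For each $w\in B(w_0,\sigma)\subset S'$, the gradient estimate for harmonic functions on $B(z,\delta(z)/2)\subset\Omega$ gives $|\partial_z g_\Omega(z,w)|\le C\,\delta(z)^{-1}\sup_{B(z,\delta(z)/2)}|g_\Omega(\cdot,w)|\le C'\,\delta(z)^{\alpha-1}$, using $\delta\le\frac32\delta(z)$ on that disc and Step 1, uniformly in $w\in B(w_0,\sigma)$. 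Since $w\mapsto\partial_z g_\Omega(z,w)$ is harmonic on $B(w_0,\sigma)$, a second gradient estimate, now in $w$ on the fixed scale $\sigma$, yields $\bigl|\partial_z\partial_{\bar w}g_\Omega(z,w_0)\bigr|\le C\,\sigma^{-1}\sup_{B(w_0,\sigma)}|\partial_z g_\Omega(z,\cdot)|\le C''\,\delta(z)^{\alpha-1}$. With the identity this gives $|K_\Omega(z,w_0)|\le C\,\delta(z)^{\alpha-1}$ near $\partial\Omega$, and for the remaining $z$ the bound is trivial since $|K_\Omega(z,w_0)|\le\sqrt{K_\Omega(z)K_\Omega(w_0)}$ is bounded on compact sets. (Inserting this into the dyadic decomposition of $\int_\Omega|K_\Omega(\cdot,w)|^{2+\tau}$ as in the proof of Theorem \ref{th:Main}, with $n=1$ and $\int_{\delta\le\varepsilon}|K_\Omega(\cdot,w)|^2\le C\varepsilon^{r\alpha}$ from Proposition \ref{prop:BergmanIntegral}, gives convergence for $\tau<\alpha/(1-\alpha)$, i.e. $\beta(\Omega)\ge 2+\alpha(\Omega)/(1-\alpha(\Omega))$.)

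The main obstacle is conceptual rather than computational: one must recognize that the mean-value route is irredeemably lossy in this setting and that the correct instrument is the Bergman–Green identity, which trades a decay estimate for a derivative estimate and, crucially, places only one derivative on the boundary variable. Once that is understood, the only genuine analytic work is Step 1, the $\delta^\alpha$-decay of $g_\Omega(\cdot,w)$ with a constant uniform for $w$ in a compact set; the barrier argument there is routine, but one has to handle the logarithmic pole (by excising a ball) and, if $\Omega$ is unbounded, the behaviour at infinity.
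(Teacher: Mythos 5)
Your proof is correct and takes essentially the same route as the paper's: both rest on Schiffer's formula $K_\Omega = \tfrac{2}{\pi}\,\partial^2 g_\Omega/\partial z\,\partial\bar w$, the boundary decay $-g_\Omega(z,w)\le C\,\delta(z)^\alpha$ uniformly for $w$ in a compact set (your barrier argument is what the paper compresses into ``the extremal property of $g_\Omega$''), and interior derivative estimates for harmonic functions. Your two successive gradient estimates at scales $\delta(z)$ and $\sigma$ are exactly what the paper obtains by differentiating its double Poisson integral over $\partial\Delta(z,\delta(z)/2)\times\partial\Delta(w,\delta(w)/2)$.
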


 \begin{proof}
  Let $g_\Omega(z,w)$ be the (negative) Green function on $\Omega$. Let $\Delta(c,r)$ be the disc with centre $c$ and radius $r$. Fix $w\in S$ and $z\in \Omega$ for a moment. Clearly, it suffices to consider the case when $\delta(z)\le \delta(w)/4$. Since $g_\Omega(\xi,\zeta)$ is harmonic in $\xi\in \Delta(z,\delta(z))$ and $\zeta\in \Delta(w,\delta(w)/2)$ respectively, we conclude from Poisson's formula that
  \begin{eqnarray*}
  g_\Omega(\xi,\zeta) & = & \frac1{4\pi^2}\int_0^{2\pi}\int_0^{2\pi}g_\Omega\left(z+\frac{\delta(z)}2e^{i\theta},w+\frac{\delta(w)}2e^{i\vartheta}\right)\\
  && \ \ \ \ \ \ \ \ \ \ \ \ \ \ \ \frac{\frac{\delta(z)^2}4-|\xi-z|^2}{\left|\frac{\delta(z)}2e^{i\theta}-(\xi-z)\right|^2} \frac{\frac{\delta(w)^2}4-|\zeta-w|^2}{\left|\frac{\delta(w)}2e^{i\vartheta}-(\zeta-w)\right|^2}d\theta d\vartheta
  \end{eqnarray*}
  where $\xi\in \Delta(z,\delta(z)/4)$ and $\zeta\in \Delta(w,\delta(w)/4)$. By the extremal property of $g_\Omega$, it is easy to verify
  $
  -g_\Omega\le C\delta(z)^\alpha
  $
  on $\partial \Delta(z,\delta(z)/2)\times \partial \Delta(w,\delta(w)/2)$.
   Thus
  $$
  \left| \frac{\partial^2 g_\Omega(\xi,\zeta)}{\partial\xi\partial\bar{\zeta}}\right|\le C\delta(z)^{\alpha-1}.
  $$
  Together with Schiffer's formula
  $
  K_\Omega(\xi,\zeta)=\frac2{\pi}  \frac{\partial^2 g_\Omega(\xi,\zeta)}{\partial\xi\partial\bar{\zeta}}
  $
  (cf. \cite{Schiffer}), the assertion immediately follows.
 \end{proof}

 In order to prove Proposition \ref{prop:Planar}, we need the following

 \begin{theorem}[cf. \cite{Carleson}, \S\,6, Theorem 1]\label{th:Carleson}
 Let $\Omega={\mathbb C}\backslash E$ where $E\subset {\mathbb C}$ is a compact set. Then
   \begin{enumerate}
    \item $A^2(\Omega)\neq \{0\}$ if and only if ${\rm Cap}(E)>0$.
    \item $A^p(\Omega)=\{0\}$ if $\Lambda_{2-q}(E)<\infty$, $2<p< \infty$, $\frac1p+\frac1q=1$. Here $\Lambda_s(E)$ denotes the $s-$dimensional Hausdorff measure of $E$.
   \end{enumerate}
   \end{theorem}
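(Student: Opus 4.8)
The plan is to deduce both halves from a \emph{removability} principle together with the elementary fact that an entire function in $L^p(\C)$, $p\ge1$, vanishes identically (since $|f|^p$ is subharmonic, $|f(z)|^p\le(\pi R^2)^{-1}\int_{B(z,R)}|f|^p\le(\pi R^2)^{-1}\|f\|_{L^p(\C)}^p\to0$). In both cases $E$ has zero area: ${\rm Cap}(E)=0$ forces $\dim_{\rm H}E=0$ (Frostman's lemma), and $\Lambda_{2-q}(E)<\infty$ with $2-q<2$ forces $\Lambda_2(E)=0$. Hence any $f\in A^2(\Omega)$ (resp.\ $f\in A^p(\Omega)$), defined arbitrarily on $E$, lies in $L^2(\C)$ (resp.\ $L^p(\C)$), and it suffices to show this extension is entire, i.e.\ that the distribution $\bar\partial f$, which is a priori supported on $E$, vanishes.

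The only direction requiring a construction is ${\rm Cap}(E)>0\Rightarrow A^2(\Omega)\neq\{0\}$; here I would write down an explicit element as a Cauchy transform. Because ${\rm Cap}(E)>0$, $E$ carries a probability measure of finite logarithmic energy $I(\mu):=\iint\log\frac1{|z-w|}\,d\mu(z)\,d\mu(w)$; splitting the equilibrium measure $\mu_E$ by a small ball about a point of $\mathrm{supp}\,\mu_E$ into two pieces of positive mass and positive mutual distance and normalising produces two \emph{distinct} probability measures $\mu,\nu$ on $E$ with $I(\mu),I(\nu)<\infty$ (the pieces inherit finite energy from $\mu_E$ and from the boundedness of $U^{\mu_E}$). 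Set $\sigma:=\mu-\nu$: a nonzero signed measure on $E$ with $\sigma(E)=0$ and $I(\sigma)<\infty$ (the cross term is finite since $\mathrm{supp}\,\mu$ and $\mathrm{supp}\,\nu$ lie at positive distance), and let $g:=\widehat\sigma$, $\widehat\sigma(z):=\int_E(w-z)^{-1}d\sigma(w)$. Then $g$ is holomorphic on $\C\setminus E$; since $\sigma(E)=0$, $g(z)=O(|z|^{-2})$ at $\infty$, so $g$ is square-integrable off a large disc; and expanding $\int_{B_R}|g|^2$ as a double integral against $\sigma\otimes\sigma$ and using $\int_{B_R}|w_1-z|^{-1}|w_2-z|^{-1}\,dA(z)\le C_R\bigl(1+\log^{+}\tfrac1{|w_1-w_2|}\bigr)$ yields $\int_{B_R}|g|^2\le C_R\bigl(1+I(\sigma)\bigr)<\infty$ — this is the quantitative core: finite logarithmic energy implies $\widehat\sigma\in L^2_{\rm loc}(\C)$. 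Thus $g\in A^2(\Omega)$, and $g\not\equiv0$, for otherwise $\widehat\sigma=0$ a.e.\ on $\C$ (as $E$ is a null set), whence $\sigma=-\tfrac1\pi\bar\partial\widehat\sigma=0$, a contradiction.

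The remaining direction of (1) and all of (2) are removability statements, proved by one $\bar\partial$--cut-off argument. Let $f$ be the extension above and $\psi\in C_0^\infty(\C)$. For cut-offs $\chi_\delta\in C_0^\infty(\C)$, $0\le\chi_\delta\le1$, equal to $1$ on a neighbourhood of $E$, the identity $\int f\,\bar\partial\bigl(\psi(1-\chi_\delta)\bigr)=0$ (valid since $\psi(1-\chi_\delta)$ is supported where $f$ is holomorphic) rearranges to $\int f(1-\chi_\delta)\bar\partial\psi=\int f\psi\,\bar\partial\chi_\delta$. For (2), with $q\in(1,2)$ and $s:=2-q\in(0,1)$, I would take $\chi_\delta$ to be the standard cut-off attached to an economical cover of $E$ by balls $B(x_i,r_i)$, $r_i<\delta$, with $\sum_i r_i^{s}\le\Lambda_s(E)+1$ (thinned to bounded overlap by a Besicovitch-type argument), so that $\|\bar\partial\chi_\delta\|_{L^q(\C)}^q\le C\sum_i r_i^{2-q}=C\sum_i r_i^{s}$ stays \emph{bounded} in $\delta$, while $|\mathrm{supp}\,\chi_\delta|\le C\sum_i r_i^2\le C\delta^{s}\sum_i r_i^{s}\to0$. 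Then $|\int f\psi\,\bar\partial\chi_\delta|\le\|\psi\|_\infty\|f\|_{L^p(\mathrm{supp}\,\chi_\delta)}\|\bar\partial\chi_\delta\|_{L^q}\to0$ by absolute continuity of $\int|f|^p$, and $\int f(1-\chi_\delta)\bar\partial\psi\to\int f\bar\partial\psi$ for the same reason; hence $\bar\partial f=0$, $f$ is entire by Weyl's lemma, and $f\equiv0$. In the remaining case of (1), $E$ is polar, hence (a classical equivalence for compact sets) of zero $W^{1,2}$ capacity, so one may instead choose $\chi_\delta$ with $\|\bar\partial\chi_\delta\|_{L^2}\to0$ and $\chi_\delta\to0$ a.e.\ off $E$; the same two estimates, now using $\|f\|_{L^2(\C)}<\infty$ and dominated convergence, again give $\bar\partial f=0$ and $f\equiv0$.

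I expect the construction in the nontrivial direction of (1) to be the main obstacle. One cannot produce the element by a conformal map or a bounded holomorphic function: if ${\rm Cap}(E)>0$ but $E$ has zero analytic capacity (e.g.\ $E$ purely $1$-unrectifiable of positive length), $\Omega$ admits no nonconstant bounded holomorphic function, so the $A^2$--element must be genuinely unbounded near $E$ — which is exactly why the Cauchy transform of a finite-energy measure (logarithmically unbounded, hence still locally square-integrable) is the right candidate. The technical crux is the implication ``$I(\sigma)<\infty\Rightarrow\widehat\sigma\in L^2_{\rm loc}$'' via the double-integral estimate, together with the (routine but fiddly) production of the balanced finite-energy measure $\sigma$. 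By contrast the removability arguments are soft once one has named the right capacity — Hausdorff content of exponent $2-q$ for (2), $W^{1,2}$ capacity for (1) — the only mildly technical ingredient being the bounded-overlap refinement of the covering.
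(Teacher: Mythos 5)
The paper does not actually prove this statement --- it is quoted from Carleson's book \cite{Carleson} --- so there is no internal argument to compare with. Your strategy is the standard (indeed, essentially Carleson's) one: the Cauchy transform of a balanced finite-energy measure for existence, capacitary cut-offs plus Weyl's lemma and a Liouville argument for removability. Part (2) and the removability half of (1) are correct as written: $\Lambda_{2-q}(E)<\infty$ does give covers with $\sum r_i^{2-q}$ bounded and $\sum r_i^2\to0$, so $\|\bar{\partial}\chi_\delta\|_{L^q}$ stays bounded while $\|f\|_{L^p({\rm supp}\,\bar{\partial}\chi_\delta)}\to0$; and zero logarithmic capacity of a planar compact set is indeed equivalent to zero $W^{1,2}$-capacity, giving the cut-offs with $\|\bar{\partial}\chi_\delta\|_{L^2}\to0$. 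The $L^2_{\rm loc}$ bound on $\widehat\sigma$ via the double-integral estimate against the logarithmic energy is also the right quantitative core and is correct.

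The genuine gap is the non-vanishing step in the direction ${\rm Cap}(E)>0\Rightarrow A^2(\Omega)\neq\{0\}$. You conclude $g=\widehat\sigma\not\equiv0$ on $\Omega$ because otherwise $\widehat\sigma=0$ a.e.\ on ${\mathbb C}$ ``as $E$ is a null set''; but in this direction the hypothesis is ${\rm Cap}(E)>0$, which does \emph{not} imply $|E|=0$ (your zero-area remarks apply only to the removability cases ${\rm Cap}(E)=0$ and $\Lambda_{2-q}(E)<\infty$). The danger is real, not cosmetic: a nonzero balanced finite-energy measure supported in $E$ can have Cauchy transform vanishing identically on ${\mathbb C}\setminus E$ --- e.g.\ for $E=\overline{\mathbb D}$, the difference of the normalized arc-length measures on $|w|=1/2$ and $|w|=1/4$ has $\widehat\sigma(z)=-\frac1z+\frac1z=0$ for all $|z|>1/2$. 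So ``$g|_\Omega=0\Rightarrow\sigma=0$'' must actually be argued. The cheap patch: since $\sigma({\mathbb C})=0$ one has $g(z)=-z^{-2}\int w\,d\sigma(w)+O(|z|^{-3})$ near $\infty$, so it suffices to arrange $\int w\,d\mu\neq\int w\,d\nu$; splitting $\mu_E$ by a line $\{{\rm Re}\,w=c\}$ charging both open half-planes (possible after a rotation, since a finite-energy measure is not a point mass) separates the two barycenters and forces $g\not\equiv0$ on the unbounded component of $\Omega$. (Alternatively one may treat $|E|>0$ as a separate, easier case; and for the paper's only application, Proposition \ref{prop:Planar}, one has ${\rm dim}_{\rm H}(E)<1$, hence $|E|=0$, so your argument suffices there as written.)
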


   \begin{remark}
    Let $\Omega\subset {\mathbb C}$ be a  domain and $E$ a closed polar set in $\Omega$. It is well-known that $E$ is removable for negative harmonic functions, so that $g_{\Omega\backslash E}(z,w)=g_\Omega(z,w)$ for $z,w\in \Omega\backslash E$. Thus $K_{\Omega\backslash E}(z,w)= K_{\Omega}(z,w)$ in view of Schiffer's formula. By the reproducing property of the Bergman kernel, we immediately get the known fact that $A^2(\Omega\backslash E)=A^2(\Omega)$.
   \end{remark}

     \begin{proof}[Proof Proposition \ref{prop:Planar}]
   Suppose on the contrary $\beta(\Omega)> 2+\frac{{\rm dim}_{\rm H}(E)}{1-{\rm dim}_{\rm H}(E)}$. Fix
    $$
    \beta(\Omega)>p> 2+\frac{{\rm dim}_{\rm H}(E)}{1-{\rm dim}_{\rm H}(E)}
    $$
    and let $q$ be the conjugate exponent of $p$, i.e., $\frac1p+\frac1q=1$.
    We then have $K_\Omega(\cdot,w)\in A^p(\Omega)$ for fixed $w$. Since
    $$
    {\rm dim}_{\rm H}(E)=\sup\{s:\Lambda_s(E)=\infty\}
    $$
    and $2-q>{\rm dim}_{\rm H}(E)$, it follows that $\Lambda_{2-q}(E)<\infty$, so that $K_\Omega(\cdot,w)=0$ in view of Theorem \ref{th:Carleson}/(2). On the other side, since  ${\rm Cap}(E)>0$, so $K_\Omega(\cdot,w)\neq 0$ in view of Theorem \ref{th:Carleson}/(1), which is absurd.
   \end{proof}

   Theorem \ref{th:OneDimension} implies $\beta(\Omega)\rightarrow \infty$ as $\alpha(\Omega)\rightarrow 1$ for planar domains (notice that $\alpha(\Omega)=1$ when $\Omega\subset {\mathbb C}$ is convex or\/ $\partial \Omega$ is $C^1$). It is also known that $\beta(\Omega)=\infty$ if $\Omega$ is a bounded smooth convex domain in ${\mathbb C}^n$ (cf. \cite{BoasStraube}). Thus it is reasonable to make the following

\begin{conjecture}
If\/ $\Omega\subset {\mathbb C}^n$ is convex, then $\beta(\Omega)=\infty$.
\end{conjecture}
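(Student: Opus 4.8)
The plan is as follows. By exhausting an unbounded $\Omega$ by the bounded convex domains $\Omega\cap B(0,j)$ and using the standard local uniform convergence $K_{\Omega\cap B(0,j)}(\cdot,w)\to K_\Omega(\cdot,w)$ together with Fatou's lemma, it suffices to treat bounded convex $\Omega$; moreover two special cases are already settled --- the case of smooth $\partial\Omega$ by Boas--Straube \cite{BoasStraube}, and the case $n=1$ by Theorem \ref{th:OneDimension}, since a convex domain has $\alpha(\Omega)=1$ and the right-hand side of that estimate is then $+\infty$. So the genuinely open content is $n\ge 2$ with $\partial\Omega$ merely convex. Here I would approximate $\Omega$ from inside by smooth strongly convex $\Omega_j\uparrow\Omega$ and, again by local uniform convergence of $K_{\Omega_j}(\cdot,w)$ and Fatou, reduce the conjecture to the \emph{uniform} estimate $\sup_j\int_{\Omega_j}|K_{\Omega_j}(\cdot,w)|^p<\infty$ for each fixed $w$ and each $p<\infty$.

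This uniform estimate would rest on two ingredients, each carrying constants that depend only on $n$ (together with fixed data near $w$ and an inner radius of $\Omega$, which stay bounded along the exhaustion). First, since the boundary distance $\delta_{\Omega_j}$ is concave on the convex domain $\Omega_j$, the function $-\delta_{\Omega_j}$ is negative, continuous and psh, so Proposition \ref{prop:BergmanIntegral} applied with $\rho=-\delta_{\Omega_j}$ yields, for every $0<r<1$,
\[
\int_{\delta_{\Omega_j}\le\varepsilon}|K_{\Omega_j}(\cdot,w)|^2\le C_r\,\varepsilon^{r},\qquad \varepsilon\le\varepsilon_r,
\]
uniformly in $j$. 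Second, I would use McNeal's maximal polydisc $Q_z\subset\Omega_j$ at each point $z$ near $\partial\Omega_j$ --- a construction requiring only convexity, no finite type assumption --- which satisfies $\delta_{\Omega_j}(\zeta)\asymp\delta_{\Omega_j}(z)$ for $\zeta\in Q_z$, $\lambda(Q_z)\asymp K_{\Omega_j}(z)^{-1}$, and, being a polydisc centred at $z$, the sub-mean-value inequality $|K_{\Omega_j}(z,w)|^2\le\lambda(Q_z)^{-1}\int_{Q_z}|K_{\Omega_j}(\cdot,w)|^2$.

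Combining the two crudely already gives something. Since $Q_z\subset\{\delta_{\Omega_j}\le C\delta_{\Omega_j}(z)\}$ and, in the worst (strictly convex) case, $\lambda(Q_z)\gtrsim\delta_{\Omega_j}(z)^{n+1}$, one obtains $|K_{\Omega_j}(z,w)|^2\le C_r\,\delta_{\Omega_j}(z)^{r-n-1}$, and the dyadic summation already carried out in the proof of Theorem \ref{th:Main} then gives $\beta(\Omega_j)\ge 2+2/n$ with a $j$-independent constant, hence $\beta(\Omega)\ge 2+2/n$ for \emph{every} bounded convex domain (in particular $\beta\ge3$ when $n=2$). This is still finite, and that is precisely the crux: the crude bound throws away the distance from $z$ to $w$. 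To reach $\beta=\infty$ one must replace it by McNeal's \emph{refined} off-diagonal estimate, $|K_{\Omega_j}(z,w)|\le C_N\,\lambda(Q_z)^{-1}\bigl(1+d_{\Omega_j}(z,w)\bigr)^{-N}$ for every $N$, where $d_{\Omega_j}$ is the associated quasi-distance. Granting this, one decomposes $\Omega_j$ into $d_{\Omega_j}(\cdot,w)$-annuli, uses the doubling of $z\mapsto\lambda(Q_z)$ and the $L^2$ decay above to sum the resulting geometric series, and gets $\int_{\Omega_j}|K_{\Omega_j}(\cdot,w)|^p<\infty$ with a constant depending only on $n,p,N$; Fatou then concludes.

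The main obstacle is exactly this last input. The refined estimate is McNeal's theorem for convex domains of \emph{finite} type $m$, and its constants a priori depend on $m$, so one must either prove they stay bounded as $\Omega_j$ degenerates to a possibly infinite-type $\Omega$, or establish the rapid off-diagonal decay directly with no finite type hypothesis. The on-diagonal asymptotics $K_\Omega(z)\asymp\lambda(Q_z)^{-1}$ are indeed type-free (Nikolov--Pflug--Zwonek), but the matching off-diagonal decay in the McNeal quasi-distance is not known in that generality; supplying it --- or finding a substitute such as a Donnelly--Fefferman type weighted estimate adapted to convexity --- is what would turn the conjecture into a theorem.
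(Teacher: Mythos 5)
The statement you were given is labeled a \emph{conjecture} in the paper and is not proved there; the only evidence the author offers is what you already cite, namely the $n=1$ case via Theorem~\ref{th:OneDimension} (convexity forces $\alpha(\Omega)=1$) and the smooth bounded convex case via \cite{BoasStraube}. So there is no proof in the paper to compare against, and you were right not to claim one.

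That said, your sketch does contain a genuine and correct partial result that goes beyond what the paper records. For bounded convex $\Omega$, taking $\rho=-\delta_\Omega$ in Proposition~\ref{prop:BergmanIntegral} is legitimate ($-\delta_\Omega$ is continuous, negative, and psh by concavity of $\delta_\Omega$), and gives $\int_{\delta\le\varepsilon}|K_\Omega(\cdot,w)|^2\le C_r\varepsilon^r$ for $w$ in a compact. Feeding this into the dyadic summation of the proof of Theorem~\ref{th:Main} but with the sub-mean-value inequality taken over McNeal's maximal inscribed polydisc $Q_z$ (volume $\gtrsim\delta(z)^{n+1}$) instead of the Euclidean ball $B(z,\delta(z))$ (volume $\asymp\delta(z)^{2n}$) yields $|K_\Omega(z,w)|^2\le C_r\delta(z)^{r-n-1}$, and the dyadic sum then converges for $\tau<2r/(n+1-r)\to 2/n$. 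So $\beta(\Omega)\ge 2+2/n$ for every bounded convex $\Omega$, which strictly improves the paper's $\beta(\Omega)\ge 2+2/(2n-1)$ (the $\alpha=1$ case of Theorem~\ref{th:Main}) when $n\ge 2$. Two points worth nailing down if you write this up: (i) you use $Q_z\subset\{\delta\lesssim\delta(z)\}$ with a constant depending only on $n$; this is true for convex domains but is a statement about the McNeal geometry that should be quoted or proved, since for the mean-value step you need the whole polydisc (not merely a shrunken copy) to sit in a thin boundary collar; (ii) for unbounded convex $\Omega$ the exhaustion-plus-Fatou reduction is fine, but note that many unbounded convex domains have $A^2(\Omega)=\{0\}$, in which case $\beta(\Omega)=\infty$ holds vacuously and no work is needed.

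On the main point you have diagnosed the obstruction accurately. The step that would close the conjecture is an off-diagonal decay of the form $|K_\Omega(z,w)|\lesssim\lambda(Q_z)^{-1}(1+d(z,w))^{-N}$ in the McNeal quasi-metric with constants independent of type, and this is only known for convex domains of finite type $m$, with constants that blow up as $m\to\infty$. The on-diagonal comparability $K_\Omega(z)\asymp\lambda(Q_z)^{-1}$ is indeed type-free, but the off-diagonal analogue is not; supplying it, or a weighted Donnelly--Fefferman substitute adapted to the polydisc geometry with type-independent constants, is precisely what would promote the conjecture to a theorem. Your proposal is therefore an honest and useful reduction, not a proof, and it improves the state of the art for convex domains to $\beta\ge 2+2/n$ while correctly leaving the conjecture open.
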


\section{Applications of $L^p-$integrability of the Bergman kernel}

We first study density of $A^p(\Omega)\cap A^2(\Omega)$ in $A^2(\Omega)$.

\begin{proposition}\label{prop:density}
Let $\Omega$ be a pseudoconvex domain in ${\mathbb C}^n$. For every $1\le p<2+\frac{2\alpha(\Omega)}{2n-\alpha(\Omega)}$, $A^p(\Omega)\cap A^2(\Omega)$ lies dense in $A^2(\Omega)$.
\end{proposition}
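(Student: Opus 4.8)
The plan is to exhibit, for each $f\in A^2(\Omega)$, an explicit sequence in $A^p(\Omega)\cap A^2(\Omega)$ converging to $f$ in the $A^2$-norm, using the sublevel sets of the relative extremal function $\varrho$ as an exhaustion of $\Omega$. Concretely, for $t>0$ set $\Omega_t:=\{z\in\Omega:-\varrho(z)>t\}$; these are relatively compact in $\Omega$ since $\varrho$ is continuous on $\overline\Omega$ and vanishes on $\partial\Omega$. I would first reduce to the case $\alpha(\Omega)>0$ (otherwise there is nothing to prove, as $2+\frac{2\alpha(\Omega)}{2n-\alpha(\Omega)}=2$). Then fix $p$ with $2\le p<2+\frac{2\alpha(\Omega)}{2n-\alpha(\Omega)}$ (the case $p<2$ being trivial since then $A^p(\Omega)\cap A^2(\Omega)\supset A^2(\Omega)\cap L^\infty_{\mathrm{loc}}$ already, or one can simply note $A^2\subset A^p$ locally and truncate; in any event $p\ge 2$ is the substantive range and I will treat that).

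The key step is to approximate $f$ by $f_t:=P_\Omega(\chi_{\Omega_t}f)$, the Bergman projection of the truncation of $f$ to $\Omega_t$. Since $\chi_{\Omega_t}f\to f$ in $L^2(\Omega)$ as $t\to 0^+$ (dominated convergence, as $\bigcup_{t>0}\Omega_t=\Omega$) and $P_\Omega$ is the orthogonal projection onto $A^2(\Omega)$, we get $f_t\to P_\Omega f=f$ in $A^2(\Omega)$. It remains to show $f_t\in A^p(\Omega)$. Writing out the projection,
$$
f_t(z)=\int_{\Omega_t}K_\Omega(z,\zeta)f(\zeta)\,dV(\zeta),
$$
so by Minkowski's integral inequality
$$
\|f_t\|_{L^p(\Omega)}\le\int_{\Omega_t}|f(\zeta)|\,\|K_\Omega(\cdot,\zeta)\|_{L^p(\Omega)}\,dV(\zeta).
$$
Now $\Omega_t$ is a fixed compact subset of $\Omega$, and by Theorem \ref{th:Main} (applied with any $\alpha<\alpha(\Omega)$ chosen so that $p<2+\frac{2\alpha}{2n-\alpha}$, which is possible since $p<2+\frac{2\alpha(\Omega)}{2n-\alpha(\Omega)}$) the quantity $\|K_\Omega(\cdot,\zeta)/\sqrt{K_\Omega(\zeta)}\|_{L^p(\Omega)}$ is bounded by $C|\mu(\zeta)|^{-(p-2)n/(2\alpha)}$; since $\mu$ and $K_\Omega$ are continuous and bounded below by positive constants on the compact set $\Omega_t$, we conclude $\sup_{\zeta\in\Omega_t}\|K_\Omega(\cdot,\zeta)\|_{L^p(\Omega)}=:M_t<\infty$. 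Hence $\|f_t\|_{L^p(\Omega)}\le M_t\|f\|_{L^1(\Omega_t)}\le M_t\,|\Omega_t|^{1/2}\|f\|_{L^2(\Omega)}<\infty$, so $f_t\in A^p(\Omega)$, and also $f_t\in A^2(\Omega)$ by construction. This proves density.

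The main obstacle is purely the bookkeeping of extracting a \emph{locally uniform} $L^p$-bound on $K_\Omega(\cdot,\zeta)$ from Theorem \ref{th:Main}: Theorem \ref{th:Main} as stated gives the bound $|\mu(w)|^{-(p-2)n/\alpha}$ for $\int_\Omega|K_\Omega(\cdot,w)/\sqrt{K_\Omega(w)}|^p$, and one must observe that over a compact subset $\Omega_t$ the factor $K_\Omega(w)^{p/2}|\mu(w)|^{-(p-2)n/\alpha}$ stays bounded — this uses continuity of $K_\Omega$ and $\varrho$ in the interior, which is standard. A subtlety worth a remark is matching the exponent constraint: one needs $p$ strictly below the \emph{supremum} $2+\frac{2\alpha(\Omega)}{2n-\alpha(\Omega)}$ so that there is room to pick an admissible $\alpha<\alpha(\Omega)$ with $p<2+\frac{2\alpha}{2n-\alpha}$; since $\alpha\mapsto 2+\frac{2\alpha}{2n-\alpha}$ is continuous and increasing, such $\alpha$ exists. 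Everything else — dominated convergence for the truncation, continuity of $P_\Omega$, Minkowski's inequality — is routine.
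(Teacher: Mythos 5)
Your proposal is correct and takes essentially the same approach as the paper: both approximate $f\in A^2(\Omega)$ by $P_\Omega(\chi f)$ for a truncation $\chi f$ of $f$ to a compact piece, use $L^2$-contractivity of $P_\Omega$ to get convergence in $A^2$, and invoke the local uniform $L^p$-bound on $K_\Omega(\cdot,\zeta)$ from Theorem \ref{th:Main} to place the approximants in $A^p(\Omega)$. The paper chooses smooth cutoffs $\chi_j\in C_0^\infty(\Omega)$ and estimates $\|P_\Omega(\chi_j f)\|_{L^p}$ via H\"older's inequality as in (\ref{eq:BergmanProjection}), whereas you use characteristic functions of sublevel sets of $\varrho$ and Minkowski's integral inequality — interchangeable technicalities that do not change the argument.
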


 \begin{proof}
 Choose a sequence of functions $\chi_j\in C_0^\infty(\Omega)$ such that $0\le \chi_j\le 1$ and the sequence of sets $\{\chi_j=1\}$ exhausts $\Omega$. Given $f\in A^2(\Omega)$, we set $f_j=P_\Omega(\chi_jf)$. Clearly, $f_j\in A^p(\Omega)\cap A^2(\Omega)$ in view of Theorem \ref{th:Main} and (\ref{eq:BergmanProjection}). Moreover,
$$
\|f_j-f\|_{L^2(\Omega)}=\|P_\Omega((\chi_j-1)f)\|_{L^2(\Omega)}\le \|(\chi_j-1)f\|_{L^2(\Omega)}\rightarrow 0.
$$
\end{proof}

Similarly, we may prove the following

\begin{proposition}\label{prop:density2}
Let\/ $\Omega$ be a domain in ${\mathbb C}$. For every $1\le p<2+\frac{\alpha(\Omega)}{1-\alpha(\Omega)}$, $A^p(\Omega)\cap A^2(\Omega)$ lies dense in $A^2(\Omega)$.
\end{proposition}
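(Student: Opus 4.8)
The plan is to imitate the proof of Proposition~\ref{prop:density} line by line, substituting Theorem~\ref{th:OneDimension} for Theorem~\ref{th:Main}. The one thing I would be careful about is that the proof of Proposition~\ref{prop:density} does not really use the bare inequality $\beta(\Omega)\ge 2+\frac{2\alpha(\Omega)}{2n-\alpha(\Omega)}$; via~(\ref{eq:BergmanProjection}) it uses that $w\mapsto\int_\Omega|K_\Omega(\cdot,w)|^p$ is \emph{locally uniformly bounded}. So the first step is to record the quantitative form of Theorem~\ref{th:OneDimension}: for a domain $\Omega\subset\C$, every compact $S\subset\Omega$, and every $p<2+\frac{\alpha(\Omega)}{1-\alpha(\Omega)}$, there is $C=C(S,p)$ with $\int_\Omega|K_\Omega(\cdot,w)|^p\le C$ for all $w\in S$. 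This is exactly what the proof of Theorem~\ref{th:OneDimension} delivers: fixing $\alpha<\alpha(\Omega)$, $0<r<1$, and $\tau:=p-2<\frac{r\alpha}{1-\alpha}$, Proposition~\ref{prop:BergmanIntegral} (with $n=1$ and $\rho\in PSH^-(\Omega)$ satisfying $-\rho\le C_\alpha\delta^\alpha$) supplies $\int_{\delta\le\varepsilon}|K_\Omega(\cdot,w)|^2\le C\varepsilon^{r\alpha}$ with $C$ depending only on $r,\alpha,S$, while Lemma~\ref{lm:upper_bound} supplies $|K_\Omega(z,w)|\le C\delta(z)^{\alpha-1}$ for $z\in\Omega$, $w\in S$; splitting $\int_\Omega|K_\Omega(\cdot,w)|^{2+\tau}$ over $\{\delta>1/2\}$ and the dyadic shells $\{2^{-k-1}<\delta\le 2^{-k}\}$, bounding $|K_\Omega(\cdot,w)|^{\tau}$ on the $k$-th shell by the pointwise estimate and $\int_{\delta\le 2^{-k}}|K_\Omega(\cdot,w)|^2$ by the $L^2$-decay estimate, leaves a convergent geometric series $\sum_k 2^{-k(r\alpha-(1-\alpha)\tau)}$ with $w$-independent sum, and letting $r\uparrow 1$, $\alpha\uparrow\alpha(\Omega)$ covers the asserted range of $p$.

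With that in place I would run the density argument of Proposition~\ref{prop:density} unchanged. Choose $\chi_j\in C_0^\infty(\Omega)$ with $0\le\chi_j\le 1$ and with the sets $\{\chi_j=1\}$ exhausting $\Omega$, and for $f\in A^2(\Omega)$ put $f_j:=P_\Omega(\chi_jf)$. Since $f$ is holomorphic, $\chi_jf$ is bounded with compact support, hence lies in every $L^q(\Omega)$; feeding this together with the locally uniform integral bound above into the H\"older estimate~(\ref{eq:BergmanProjection}) (applied to $\phi=\chi_jf$, which is all that~(\ref{eq:BergmanProjection}) needs) gives $f_j\in A^p(\Omega)$, and $f_j\in A^2(\Omega)$ because $P_\Omega$ is an $L^2$-contraction. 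Then
$$
\|f_j-f\|_{L^2(\Omega)}=\|P_\Omega((\chi_j-1)f)\|_{L^2(\Omega)}\le\|(\chi_j-1)f\|_{L^2(\Omega)}\to 0
$$
by dominated convergence, so $A^p(\Omega)\cap A^2(\Omega)$ is dense in $A^2(\Omega)$.

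The hard part --- really the only part that is not bookkeeping --- is upgrading ``$K_\Omega(\cdot,w)\in L^p$ for each $w$'' to ``$\int_\Omega|K_\Omega(\cdot,w)|^p$ bounded uniformly on compacta'', which is why one has to revisit the proof of Theorem~\ref{th:OneDimension} in the quantitative form above rather than invoke its statement verbatim. Everything else (admissibility of the merely continuous $\chi_jf$ in~(\ref{eq:BergmanProjection}), contractivity of the Bergman projection on $L^2$, and the dominated-convergence step) is routine.
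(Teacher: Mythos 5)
Your proposal is correct and follows the paper's intended route: the paper's own proof of Proposition~\ref{prop:density2} is just the word ``similarly'' pointing back to Proposition~\ref{prop:density}, with Theorem~\ref{th:OneDimension} (i.e.\ Lemma~\ref{lm:upper_bound} in place of~(\ref{eq:upper})) substituted for Theorem~\ref{th:Main}. You have also filled in the one point the paper leaves implicit, namely that~(\ref{eq:BergmanProjection}) actually needs $w\mapsto\int_\Omega|K_\Omega(\cdot,w)|^p$ to be locally uniformly bounded rather than merely finite for each $w$, and you correctly checked that Proposition~\ref{prop:BergmanIntegral} plus Lemma~\ref{lm:upper_bound} deliver a constant depending only on $r$, $\alpha$, and the compact set $S$, with the dyadic sum $\sum_k 2^{-k(r\alpha-(1-\alpha)\tau)}$ converging exactly for $\tau<\frac{r\alpha}{1-\alpha}$.

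One stray remark to tidy: you mention the ``admissibility of the merely continuous $\chi_jf$'' in~(\ref{eq:BergmanProjection}), but since $f$ is holomorphic, $\chi_jf\in C_0^\infty(\Omega)$ already, so no extension of~(\ref{eq:BergmanProjection}) beyond smooth data is needed. Also, like the paper, you treat only $p\ge 2$ in substance; for $1\le p<2$ either one uses that on bounded $\Omega$ one has $A^2(\Omega)\subset A^p(\Omega)$ so the claim is vacuous, or one observes that the same Hölder computation works once one knows $K_\Omega(\cdot,\zeta)\in L^p$ on compacta of $\zeta$ — a point the paper likewise does not spell out.
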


Next we study reproducing property of the Bergman kernel in $A^p(\Omega)$.

\begin{proposition}\label{prop:reproduce}
Let\/ $\Omega$ be a bounded domain in ${\mathbb C}$ with $\alpha(\Omega)>0$.
 If $p>2-\alpha(\Omega)$, then
 $
 f=P_\Omega(f)
 $
 for all $f\in A^p(\Omega)$.
  \end{proposition}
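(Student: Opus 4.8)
The plan is to define $P_\Omega(f)$ for $f\in A^p(\Omega)$, reduce to the range $2-\alpha(\Omega)<p<2$, and then recover the reproducing identity by exhausting $\Omega$ from inside by relatively compact subdomains on which the classical $A^2$-reproducing property is available.

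For $p\ge 2$, since $\Omega$ is bounded one has $A^p(\Omega)\subset A^2(\Omega)$ and $f=P_\Omega(f)$ is the usual reproducing property of the Bergman kernel; so I would assume $2-\alpha(\Omega)<p<2$, noting $p>2-\alpha(\Omega)\ge 1$ since $\alpha(\Omega)\le 1$. Fix $\alpha$ with $2-p<\alpha<\alpha(\Omega)$ and let $q$ be the exponent conjugate to $p$ ($1/p+1/q=1$); one checks that $q<\tfrac{2-\alpha}{1-\alpha}<\tfrac{2-\alpha(\Omega)}{1-\alpha(\Omega)}=2+\tfrac{\alpha(\Omega)}{1-\alpha(\Omega)}\le\beta(\Omega)$ by Theorem \ref{th:OneDimension}, so $K_\Omega(\cdot,w)\in A^q(\Omega)$ for every $w$. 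Hence, by H\"older's inequality, $P_\Omega(f)(z):=\int_\Omega K_\Omega(z,w)f(w)\,dw$ is an absolutely convergent integral, and $z\mapsto P_\Omega(f)(z)$ is holomorphic (it is the locally uniform limit of the holomorphic, locally uniformly bounded functions $z\mapsto\int_{\{-\varrho>1/j\}}K_\Omega(z,w)f(w)\,dw$).

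Next I would use the exhaustion $U_t:=\{z\in\Omega:-\varrho(z)>t\}$ of $\Omega$ and, for $f\in A^p(\Omega)$, set $f_j:=P_\Omega(\chi_{U_{1/j}}f)\in A^2(\Omega)$, which is well defined because $f$ is bounded on each $U_{1/j}\subset\subset\Omega$. Dominated convergence gives $f_j(z)=\int_{U_{1/j}}K_\Omega(z,w)f(w)\,dw\to P_\Omega(f)(z)$, while the classical reproducing property in $A^2(U_{1/j})$ gives $f(z)=\int_{U_{1/j}}K_{U_{1/j}}(z,w)f(w)\,dw$ for $z\in U_{1/j}$; subtracting,
\[
f(z)-f_j(z)=\int_{U_{1/j}}\bigl(K_{U_{1/j}}(z,w)-K_\Omega(z,w)\bigr)f(w)\,dw,\qquad z\in U_{1/j}.
\]
For fixed $z$ and fixed small $b>0$ with $z\in U_b$, I split this integral over $U_b$ and over $U_{1/j}\setminus U_b$: the first piece tends to $0$ as $j\to\infty$, since $K_{U_{1/j}}(z,\cdot)\to K_\Omega(z,\cdot)$ uniformly on the compact set $\overline{U_b}$ and $f\in L^1(U_b)$; the second piece is, by H\"older, at most $\bigl(\|K_\Omega(z,\cdot)\|_{L^q(\Omega)}+\sup_j\|K_{U_{1/j}}(z,\cdot)\|_{L^q(U_{1/j})}\bigr)\|f\|_{L^p(\{-\varrho\le b\})}$. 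Letting $j\to\infty$ and then $b\to 0$ (so $|\{-\varrho\le b\}|\to 0$, as $\varrho<0$ on $\Omega$) would then give $P_\Omega(f)(z)=f(z)$, once one establishes the uniform bound $\sup_j\int_{U_{1/j}}|K_{U_{1/j}}(z,\cdot)|^q<\infty$ for each $z$.

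This uniform estimate is where I expect the real work to be, and it is also where the restriction to $n=1$ enters. I would deduce it by rerunning the proof of Theorem \ref{th:OneDimension} with $U_{1/j}$ in place of $\Omega$, checking that all constants can be taken independent of $j$; the crucial input is a uniform lower bound $\alpha(U_{1/j})\ge\alpha$. For this one uses that, when $n=1$, $\varrho$ is H\"older continuous on $\overline\Omega$ (the remark after Lemma \ref{lm:Green_Holder2}; cf. \cite{CarlesonGamelin}), say $|\varrho(x)-\varrho(y)|\le L|x-y|^{\alpha}$: then $\varrho+1/j$ is a negative continuous psh exhaustion of $U_{1/j}$ with $-(\varrho+1/j)\le L\,\delta_{U_{1/j}}^{\alpha}$ (since $\varrho\equiv-1/j$ on $\partial U_{1/j}$), with $L$ independent of $j$. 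Granting this, Lemma \ref{lm:upper_bound} applied to $U_{1/j}$ gives $|K_{U_{1/j}}(\zeta,z)|\le C(z)\,\delta_{U_{1/j}}(\zeta)^{\alpha-1}$ (with $C(z)$ controlled by $L$, by $\operatorname{diam}\Omega$, and by a uniform lower bound for $\delta_{U_{1/j}}(z)$), and Proposition \ref{prop:BergmanIntegral} applied to $U_{1/j}$ with the exhaustion $\varrho+1/j$ gives, uniformly in $j$, the decay $\int_{\delta_{U_{1/j}}\le t}|K_{U_{1/j}}(z,\cdot)|^2\le C(z)\,t^{\alpha r}$ for small $t$ and any $r<1$. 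Combining these two bounds by the dyadic summation from the proof of Theorem \ref{th:OneDimension} (convergent since $q=2+\tau$ with $\tau<\tfrac{\alpha}{1-\alpha}$ and $r$ may be taken close to $1$) produces $\int_{U_{1/j}}|K_{U_{1/j}}(z,\cdot)|^q\le C(z)$ uniformly in $j$, finishing the proof. The only genuinely delicate point is this bookkeeping of constants for the subdomains $U_{1/j}$.
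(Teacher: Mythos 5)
Your proof is correct, but it takes a genuinely different route from the paper's. The paper's own proof of Proposition \ref{prop:reproduce} is very short: it invokes Hedberg's classical $n=1$ density theorem to produce $f_j\in\mathcal O(\overline\Omega)\subset A^2(\Omega)$ with $\|f_j-f\|_{L^p}\to 0$, observes that this forces $f_j\to f$ locally uniformly, and then passes the identity $f_j=\int_\Omega f_j(\cdot)K_\Omega(z,\cdot)$ to the limit using $K_\Omega(z,\cdot)\in L^q$, exactly as in your first paragraph. Your approach replaces the density-in-$L^p$ step by an exhaustion $U_t=\{-\varrho>t\}$, the reproducing property of $K_{U_t}$ on each $U_t$, and a uniform-in-$t$ bound $\int_{U_t}|K_{U_t}(z,\cdot)|^q\le C(z)$, for which you rerun Lemma \ref{lm:upper_bound} and Proposition \ref{prop:BergmanIntegral} on the sublevel sets; the uniformity ultimately rests on the Hölder continuity of $\varrho$ on $\overline\Omega$ (valid for $n=1$), which gives $-(\varrho+t)\le L\,\delta_{U_t}^{\alpha}$ with $L$ independent of $t$. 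This is in fact essentially the strategy the paper itself uses to prove the \emph{higher-dimensional} analogue, Proposition \ref{prop:reproduce2} (where Hedberg is unavailable), with the cosmetic difference that there the passage to the limit is done via an $L^2$/$L^{q'}$ interpolation of $\|K_\Omega(z,\cdot)-K_{\Omega_t}(z,\cdot)\|_{L^q(\Omega_t)}$ rather than your split over $U_b$ and $U_{1/j}\setminus U_b$; both work. What Hedberg buys the paper in $n=1$ is brevity and no constant-tracking; what your exhaustion argument buys is that it does not rely on a one-variable density theorem and so is closer to a dimension-free method. The one place you should be explicit, rather than gesturing at ``bookkeeping'', is the uniform lower bound on $\delta_{U_t}(z)$ and on $\inf_{\partial\Delta(z,\delta_{U_t}(z)/2)}|\varrho+t|$ that the constant in Lemma \ref{lm:upper_bound} (via the Green-function comparison) and the factor $K_{(U_t)_a}(z)$ in Proposition \ref{prop:BergmanIntegral} actually depend on; both are indeed uniform for fixed $z$ and small $t$, but this is the content of the claim, not a triviality.
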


 \begin{proof}
 Suppose  $f\in A^p(\Omega)$ with $p>2-\alpha(\Omega)$.  Let $q$ be the conjugate exponent of $p$. Since $q<2+\frac{\alpha(\Omega)}{1-\alpha(\Omega)}$, so the integral $
 \int_\Omega f(\cdot)K_\Omega(z,\cdot)
 $
 is well-defined in view of Theorem \ref{th:OneDimension}.
 Clearly, it suffices to consider the case $p<2$. By Theorem 1 of Hedberg \cite{Hedberg}, we may find a sequence $f_j\in {\mathcal O}(\overline{\Omega})\subset A^2(\Omega)\subset A^p(\Omega)$ such that $\|f_j-f\|_{L^p(\Omega)}\rightarrow 0$. It follows that for every $z\in \Omega$,
 $$
 f(z)=\lim_{j\rightarrow \infty} f_j(z)=\lim_{j\rightarrow \infty}\int_\Omega f_j(\cdot)K_\Omega(z,\cdot)=\int_\Omega f(\cdot)K_\Omega(z,\cdot)
 $$
 since $K_\Omega(z,\cdot)\in L^q(\Omega)$.
\end{proof}

For a bounded domain $\Omega\subset {\mathbb C}^n$, the\/ {\it Berezin transform\/} $T_\Omega$ of $\Omega$ is defined as
$$
T_\Omega(f)(z)=\int_\Omega f(\cdot) \frac{|K_\Omega(\cdot,z)|^2}{K_\Omega(z)},\ \ \ z\in \Omega,\,f\in L^\infty(\Omega).
$$
Clearly, one has $f=T_\Omega(f)$ for all $f\in A^\infty(\Omega)$.

\begin{corollary}\label{cor:BerezinReproduce1}
Let\/ $\Omega$ be a bounded domain in ${\mathbb C}$ with $\alpha(\Omega)>0$.  If $p>2/\alpha(\Omega)-1$, then  $f=T_\Omega(f)$ for all $f\in A^p(\Omega)$.
\end{corollary}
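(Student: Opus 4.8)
The plan is to reduce the claim to the $A^{p}$-reproducing property established in Proposition~\ref{prop:reproduce}. Fix $z\in\Omega$ and set $h:=f\,K_\Omega(\cdot,z)\in\mathcal{O}(\Omega)$. Using the Hermitian symmetry $K_\Omega(z,\cdot)=\overline{K_\Omega(\cdot,z)}$ one has $f(\cdot)|K_\Omega(\cdot,z)|^2=h(\cdot)\,K_\Omega(z,\cdot)$, so that
\[
K_\Omega(z)\,T_\Omega(f)(z)=\int_\Omega h(\cdot)\,K_\Omega(z,\cdot).
\]
If we can show that $h\in A^{p'}(\Omega)$ for some $p'>2-\alpha(\Omega)$, then Proposition~\ref{prop:reproduce} applied to $h$ gives $\int_\Omega h(\cdot)\,K_\Omega(z,\cdot)=h(z)=f(z)\,K_\Omega(z,z)$, and hence $T_\Omega(f)(z)=f(z)$. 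So the whole proof comes down to producing such an exponent $p'$.

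First fix $\alpha\in(0,\alpha(\Omega))$ with $p>2/\alpha-1$ (possible since $2/\alpha-1$ depends continuously and decreasingly on $\alpha$ and $p>2/\alpha(\Omega)-1$); note $\alpha<1$. By Theorem~\ref{th:OneDimension}, $\beta(\Omega)\ge 2+\frac{\alpha(\Omega)}{1-\alpha(\Omega)}\ge\frac{2-\alpha}{1-\alpha}$, so $K_\Omega(\cdot,z)\in A^m(\Omega)$ for every $m<\frac{2-\alpha}{1-\alpha}$. For $2-\alpha<p'\le p$, Hölder's inequality with the conjugate pair $\big(\tfrac{p}{p'},\tfrac{p}{p-p'}\big)$ gives
\[
\int_\Omega|h|^{p'}\le\|f\|_{L^p(\Omega)}^{p'}\Big(\int_\Omega|K_\Omega(\cdot,z)|^{\frac{pp'}{p-p'}}\Big)^{\frac{p-p'}{p}},
\]
which is finite as soon as $\frac{pp'}{p-p'}<\frac{2-\alpha}{1-\alpha}$. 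A short computation shows that this inequality together with $p'>2-\alpha$ confines $p'$ to the interval $\big(2-\alpha,\ \frac{(2-\alpha)p}{p(1-\alpha)+2-\alpha}\big)$, that its right endpoint is $\le p$ (so the Hölder exponents above are admissible), and that the interval is non-empty precisely when $p\alpha>2-\alpha$, i.e. when $p>2/\alpha-1$ — exactly our running hypothesis. Fixing one such $p'$, we obtain $h\in A^{p'}(\Omega)$ with $p'>2-\alpha>2-\alpha(\Omega)$.

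Finally one verifies that the manipulations are legitimate. Writing $q'=p'/(p'-1)$ for the exponent conjugate to $p'$, the bound $p'>2-\alpha$ gives $q'<\frac{2-\alpha}{1-\alpha}\le\beta(\Omega)$, hence $K_\Omega(\cdot,z)\in A^{q'}(\Omega)$ and $\int_\Omega|f|\,|K_\Omega(\cdot,z)|^2=\int_\Omega|h|\,|K_\Omega(\cdot,z)|\le\|h\|_{L^{p'}(\Omega)}\|K_\Omega(\cdot,z)\|_{L^{q'}(\Omega)}<\infty$; thus the integral defining $T_\Omega(f)(z)$, and likewise $\int_\Omega h(\cdot)K_\Omega(z,\cdot)$, converge absolutely, so Proposition~\ref{prop:reproduce} applies to $h\in A^{p'}(\Omega)$ and yields $\int_\Omega h(\cdot)K_\Omega(z,\cdot)=h(z)$. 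Combining, $K_\Omega(z)\,T_\Omega(f)(z)=h(z)=f(z)K_\Omega(z,z)$, so $T_\Omega(f)(z)=f(z)$; since $z\in\Omega$ is arbitrary this proves the corollary. The only point requiring care is the exponent bookkeeping of the middle paragraph — checking that the two Hölder constraints on $p'$ are jointly satisfiable exactly under $p>2/\alpha(\Omega)-1$; the remainder is a routine invocation of Theorem~\ref{th:OneDimension} and Proposition~\ref{prop:reproduce}.
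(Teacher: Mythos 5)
Your proposal is correct and follows essentially the same route as the paper: apply H\"older to place $h:=fK_\Omega(\cdot,z)$ (the paper additionally divides by $K_\Omega(z)$, an immaterial normalization) in $A^{p'}(\Omega)$ with $p'>2-\alpha(\Omega)$, then invoke the $A^{p'}$-reproducing property of Proposition~\ref{prop:reproduce}. The only cosmetic difference is the exponent bookkeeping: the paper simply takes the explicit choice $p'=\tfrac{2p}{p+1}$, which makes the first H\"older factor equal exactly $\int_\Omega|f|^p$ and reduces the constraint to the single inequality $\tfrac{p'}{p'-1}=\tfrac{2p}{p-1}<2+\tfrac{\alpha(\Omega)}{1-\alpha(\Omega)}$ (equivalent to $p>2/\alpha(\Omega)-1$), whereas you introduce an auxiliary $\alpha<\alpha(\Omega)$ and argue that a nonempty interval of admissible $p'$ exists. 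Both lead to the same conclusion; the paper's closed-form choice of $p'$ is tidier but not substantively different.
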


\begin{proof}
Set $p'=\frac{2p}{p+1}$. It follows from H\"older's inequality that
 \begin{eqnarray*}
 \int_\Omega |f K_\Omega(\cdot,z)|^{p'} & \le & \left(\int_\Omega |f|^{\frac{p'}{2-p'}}\right)^{2-p'}\left(\int_\Omega |K_\Omega(\cdot,z)|^{\frac{p'}{p'-1}}\right)^{p'-1}\\
                                    & = & \left(\int_\Omega |f|^{p}\right)^{2-p'}\left(\int_\Omega |K_\Omega(\cdot,z)|^{\frac{p'}{p'-1}}\right)^{p'-1}\\
                                    & < & \infty,
 \end{eqnarray*}
 since $p'>2-\alpha(\Omega)$ and $\frac{p'}{p'-1}< 2+\frac{\alpha(\Omega)}{1-\alpha(\Omega)}$. Thus $h:=fK_\Omega(\cdot,z)/K_\Omega(z)\in A^{p'}(\Omega)$ for fixed $z\in \Omega$, so that $$
 f(z)=h(z)=\int_\Omega h(\cdot) K_\Omega(z,\cdot)=\int_\Omega f(\cdot)\frac{|K_\Omega(\cdot,z)|^2}{K_\Omega(z)}.
 $$
\end{proof}

For higher-dimensional cases, we can only prove the following

\begin{proposition}\label{prop:reproduce2}
Let\/ $\Omega$ be a bounded pseudoconvex domain in ${\mathbb C}^n$. Suppose there exists a negative psh exhaustion function $\rho$ on $\Omega$ such that for suitable constants $C,\alpha>0$,
$$
|\rho(z)-\rho(w)|\le C|z-w|^{\alpha}, \ \ \ z,w\in \Omega.
$$
  For every $p>\frac{4n}{2n+\alpha}$, one has $f=P_\Omega(f)$ for all $f\in A^p(\Omega)$.
   \end{proposition}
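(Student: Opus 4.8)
The plan is to reduce to the range $p<2$ (for $p\ge 2$ the boundedness of $\Omega$ gives $A^p(\Omega)\subset A^2(\Omega)$, for which $f=P_\Omega(f)$ is classical) and then to recover $f$ as a limit of the reproducing formulas for $f$ on the sublevel sets of $\rho$. First I would record two consequences of the hypotheses. The estimate $|\rho(z)-\rho(w)|\le C|z-w|^\alpha$ forces $\rho$ to extend continuously to $\overline{\Omega}$, and since $\rho$ is a negative exhaustion function this extension must vanish identically on $\partial\Omega$; hence $-\rho\le C\delta^\alpha$ and in particular $\alpha(\Omega)\ge\alpha$. Consequently, if $q$ denotes the conjugate exponent of $p$, then $p>\tfrac{4n}{2n+\alpha}$ is equivalent to $q<2+\tfrac{2\alpha}{2n-\alpha}\le 2+\tfrac{2\alpha(\Omega)}{2n-\alpha(\Omega)}$, so $K_\Omega(z,\cdot)\in L^q(\Omega)$ for each fixed $z$ by Theorem \ref{th:Main}, and the integral $P_\Omega(f)(z)=\int_\Omega f(\cdot)K_\Omega(z,\cdot)$ is absolutely convergent by H\"older's inequality.

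Set $\Omega_\varepsilon=\{z\in\Omega:-\rho(z)>\varepsilon\}$. The point of the H\"older hypothesis is that $\rho+\varepsilon$ is a negative continuous psh exhaustion function of $\Omega_\varepsilon$, so each $\Omega_\varepsilon$ is a bounded pseudoconvex (indeed hyperconvex) domain, and moreover a short computation from the H\"older bound shows $-(\rho+\varepsilon)\le C_\alpha\,\delta_{\Omega_\varepsilon}^\alpha$ with $C_\alpha$ \emph{independent of $\varepsilon$}. Since $\overline{\Omega_\varepsilon}=\{-\rho\ge\varepsilon\}$ is a compact subset of $\Omega$, $f$ is holomorphic on a neighbourhood of it and hence lies in $A^2(\Omega_\varepsilon)$, so the reproducing property on $\Omega_\varepsilon$ gives
\[
f(z)=\int_{\Omega_\varepsilon}f(\cdot)\,K_{\Omega_\varepsilon}(z,\cdot)=\int_\Omega f(\cdot)\,\chi_{\Omega_\varepsilon}(\cdot)\,K_{\Omega_\varepsilon}(z,\cdot),\qquad z\in\Omega_\varepsilon.
\]
The key technical step is then the \emph{uniform} bound $\sup_{0<\varepsilon<\varepsilon_0}\int_{\Omega_\varepsilon}|K_{\Omega_\varepsilon}(z,\cdot)|^q<\infty$ for fixed $z$. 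I would obtain this by running the proof of Theorem \ref{th:Main}, which rests on Proposition \ref{prop:BergmanIntegral}, verbatim on each $\Omega_\varepsilon$ with exhaustion function $\rho+\varepsilon$: the constants in Proposition \ref{prop:BergmanIntegral} depend only on $n$ and on the chosen $r\in(0,1)$; those in the passage from the $L^2$ boundary decay to $L^q$ integrability depend in addition only on the constant $C'$ in an inequality $-(\rho+\varepsilon)\le C'\delta_{\Omega_\varepsilon}^{\alpha'}$ (which, using $\delta_{\Omega_\varepsilon}\le\operatorname{diam}\Omega$, may be taken independent of $\varepsilon$) and on the value $K_{(\Omega_\varepsilon)_a}(z)=K_{\Omega_{a+\varepsilon}}(z)\le K_{\Omega_{2a}}(z)$ for $\varepsilon<a$, again independent of $\varepsilon$. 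Choosing $\alpha'<\alpha$ close enough that $q<2+\tfrac{2\alpha'}{2n-\alpha'}$ yields the uniform bound.

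With this in hand the proof concludes by letting $\varepsilon\downarrow 0$. Since $\Omega_\varepsilon\uparrow\Omega$, it is well known that $K_{\Omega_\varepsilon}(z,\cdot)\to K_\Omega(z,\cdot)$ locally uniformly, so the integrands $f\,\chi_{\Omega_\varepsilon}K_{\Omega_\varepsilon}(z,\cdot)$ converge almost everywhere to $f\,K_\Omega(z,\cdot)$; and for any measurable $E\subset\Omega$, H\"older's inequality together with the uniform bound gives $\int_E|f\,\chi_{\Omega_\varepsilon}K_{\Omega_\varepsilon}(z,\cdot)|\le C\,\|f\|_{L^p(E)}$, so the family is uniformly integrable on the finite measure space $\Omega$. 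Vitali's convergence theorem then yields $f(z)=\int_\Omega f(\cdot)K_\Omega(z,\cdot)=P_\Omega(f)(z)$ for every $z\in\Omega$, which is the assertion. The only delicate point is the uniformity in $\varepsilon$ of the constants entering the $L^q$ estimate on $\Omega_\varepsilon$, and this is exactly where the H\"older continuity of $\rho$ — as opposed to mere continuity — is used: it is what makes $\rho+\varepsilon$ serve as an exhaustion function of $\Omega_\varepsilon$ carrying a boundary-decay estimate with an $\varepsilon$-free constant.
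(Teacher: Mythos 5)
Your proposal is correct and follows the paper's strategy in all essentials: exhaust $\Omega$ by the sublevel sets $\Omega_\varepsilon=\{-\rho>\varepsilon\}$, use the uniform H\"older modulus of $\rho$ to get $-(\rho+\varepsilon)\le C\,\delta_{\Omega_\varepsilon}^\alpha$ with a constant independent of $\varepsilon$, run the argument of Theorem~\ref{th:Main} on each $\Omega_\varepsilon$ to obtain $\sup_\varepsilon\int_{\Omega_\varepsilon}|K_{\Omega_\varepsilon}(z,\cdot)|^q<\infty$, and pass to the limit in the reproducing identity on $\Omega_\varepsilon$. The one place you depart from the paper is the limiting step: you combine the locally uniform convergence $K_{\Omega_\varepsilon}(z,\cdot)\to K_\Omega(z,\cdot)$ with uniform integrability of $f\chi_{\Omega_\varepsilon}K_{\Omega_\varepsilon}(z,\cdot)$ and invoke Vitali's theorem, whereas the paper estimates the difference $\int_\Omega fK_\Omega(z,\cdot)-\int_{\Omega_t}fK_{\Omega_t}(z,\cdot)$ directly, controlling $\|K_\Omega(z,\cdot)-K_{\Omega_t}(z,\cdot)\|_{L^q(\Omega_t)}$ by interpolating between the $L^2$ norm (which tends to zero since $K_{\Omega_t}(z)\to K_\Omega(z)$) and an $L^{\frac{q-\gamma}{1-\gamma/2}}$ norm (uniformly bounded for $\gamma$ small). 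Both routes rest on the same uniform $L^q$ bound; your Vitali packaging is a touch cleaner, avoiding the auxiliary exponent $\gamma$, but is not a genuinely different argument.
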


 \begin{proof}
Set $\Omega_t=\{-\rho>t\}$, $t\ge 0$, and $\rho_t:=\rho+t$. For every $z\in \Omega_t$, we choose $z^\ast\in \partial \Omega_t$ such that $|z-z^\ast|=\delta_t(z):=d(z,\partial \Omega_t)$.  We then have
   $$
   |\rho_t(z)|=|\rho_t(z)-\rho_t(z^\ast)|\le C |z-z^\ast|^\alpha=C \delta_t(z)^\alpha
   $$
  where $C$ is a  constant independent of $t$.
  By a similar argument as the proof of Theorem \ref{th:Main}, we may show that for fixed $w\in \Omega$,
    $$
 \int_{\Omega_t}|K_{\Omega_t}(\cdot,w)|^q\le C=C(q,w)<\infty
 $$
 holds uniformly in $t\ll 1$ for every $q<2+\frac{2\alpha}{2n-\alpha}$.
 Let $2> p>\frac{4n}{2n+\alpha}$ and $f\in A^p(\Omega)$. Fix $z\in \Omega$ for a moment. For every $t\ll1$, we have $z\in \Omega_t$ and
 \begin{equation}\label{eq:reproduce}
 f(z)=\int_{\Omega_t}f(\cdot)K_{\Omega_t}(z,\cdot).
 \end{equation}
 Notice that
 \begin{eqnarray}\label{eq:reproduce2}
 && \left| \int_\Omega f(\cdot)K_\Omega(z,\cdot)-\int_{\Omega_t} f(\cdot)K_{\Omega_t}(z,\cdot)\right|\nonumber\\
 & \le & \int_{\Omega_t} |f||K_\Omega(z,\cdot)-K_{\Omega_t}(z,\cdot)|+\int_{\Omega\backslash \Omega_t}|f||K_\Omega(z,\cdot)|\nonumber\\
 & \le & \|f\|_{L^p(\Omega)}\|K_\Omega(z,\cdot)-K_{\Omega_t}(z,\cdot)\|_{L^q(\Omega_t)}+\|f\|_{L^p(\Omega\backslash \Omega_t)}\|K_\Omega(z,\cdot)\|_{L^q(\Omega)}
 \end{eqnarray}
 where $\frac1p+\frac1q=1$ (which implies $q<2+\frac{2\alpha}{2n-\alpha}$). Take $0<\gamma\ll 1$ so that $\frac{q-\gamma}{1-\gamma/2}<2+\frac{2\alpha}{2n-\alpha}$. We then have
 \begin{eqnarray*}
  && \int_{\Omega_t} |K_\Omega(z,\cdot)-K_{\Omega_t}(z,\cdot)|^q \\
  & = & \int_{\Omega_t}|K_\Omega(z,\cdot)-K_{\Omega_t}(z,\cdot)|^\gamma |K_\Omega(z,\cdot)-K_{\Omega_t}(z,\cdot)|^{q-\gamma}\\
  & \le & \left(\int_{\Omega_t}|K_\Omega(z,\cdot)-K_{\Omega_t}(z,\cdot)|^2\right)^{\gamma/2}\left(\int_{\Omega_t}
  |K_\Omega(z,\cdot)-K_{\Omega_t}(z,\cdot)|^{\frac{q-\gamma}{1-\gamma/2}}\right)^{1-\gamma/2}
 \end{eqnarray*}
 in view of H\"older's inequality.
 Since
 \begin{eqnarray*}
  && \int_{\Omega_t}|K_\Omega(z,\cdot)-K_{\Omega_t}(z,\cdot)|^2\\
   & = & \int_{\Omega_t}|K_\Omega(z,\cdot)|^2+\int_{\Omega_t}|K_{\Omega_t}(z,\cdot)|^2\\
  && -2{\rm Re} \int_{\Omega_t} K_\Omega(z,\cdot) K_{\Omega_t}(\cdot,z)\\
  & \le & K_{\Omega_t}(z)-K_\Omega(z)\\
  & \rightarrow & 0\ \ \ (t\rightarrow 0)
 \end{eqnarray*}
 and
 \begin{eqnarray*}
  && \int_{\Omega_t}
  |K_\Omega(z,\cdot)-K_{\Omega_t}(z,\cdot)|^{\frac{q-\gamma}{1-\gamma/2}}\\
   & \le & 2^{\frac{q-\gamma}{1-\gamma/2}}\left(\int_{\Omega}
  |K_{\Omega}(z,\cdot)|^{\frac{q-\gamma}{1-\gamma/2}}+\int_{\Omega_t}
  |K_{\Omega_t}(z,\cdot)|^{\frac{q-\gamma}{1-\gamma/2}}\right)\\
   & \le & C,
 \end{eqnarray*}
 it follows from (\ref{eq:reproduce}) and (\ref{eq:reproduce2}) that $f=P_\Omega(f)$.
 \end{proof}

 Similarly, we have

 \begin{corollary}
 If $p>2n/\alpha$, then $f=T_\Omega(f)$ for all $f\in A^p(\Omega)$.
 \end{corollary}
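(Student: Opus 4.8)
The plan is to deduce the statement from the reproducing property of the Bergman projection on $A^{p'}(\Omega)$ furnished by Proposition \ref{prop:reproduce2}, exactly as Corollary \ref{cor:BerezinReproduce1} was deduced from Proposition \ref{prop:reproduce} in the planar case. Fix $z\in\Omega$ and $f\in A^p(\Omega)$ with $p>2n/\alpha$, where $\alpha$ denotes the H\"older exponent of the psh exhaustion $\rho$ supplied by the hypothesis, and consider the holomorphic function
$$
h:=\frac{f\,K_\Omega(\cdot,z)}{K_\Omega(z)}.
$$
Because $K_\Omega(z,\zeta)=\overline{K_\Omega(\zeta,z)}$, we have $h(z)=f(z)$ and $\int_\Omega h(\cdot)\,K_\Omega(z,\cdot)=T_\Omega(f)(z)$. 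Hence it suffices to produce an exponent $p'>\frac{4n}{2n+\alpha}$ with $h\in A^{p'}(\Omega)$: Proposition \ref{prop:reproduce2} then gives $h=P_\Omega(h)$, and evaluating this identity at the point $z$ yields $f(z)=h(z)=\int_\Omega h(\cdot)\,K_\Omega(z,\cdot)=T_\Omega(f)(z)$, as desired.

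To produce such a $p'$, I would bound $\int_\Omega|h|^{p'}=K_\Omega(z)^{-p'}\int_\Omega|f|^{p'}|K_\Omega(\cdot,z)|^{p'}$ by H\"older's inequality with the conjugate pair $\frac{p}{p'}$ and $\frac{p}{p-p'}$, obtaining
$$
\int_\Omega|h|^{p'}\le K_\Omega(z)^{-p'}\left(\int_\Omega|f|^p\right)^{p'/p}\left(\int_\Omega|K_\Omega(\cdot,z)|^{\frac{pp'}{p-p'}}\right)^{\frac{p-p'}{p}}.
$$
The first factor is finite since $f\in A^p(\Omega)$. For the second, the first part of Theorem \ref{th:Main} gives $K_\Omega(\cdot,z)\in L^r(\Omega)$ for every $r<2+\frac{2\alpha(\Omega)}{2n-\alpha(\Omega)}$, and $\alpha(\Omega)\ge\alpha$ because the H\"older bound on $\rho$ forces $-\varrho\le C\delta^\alpha$; so the second factor is finite provided $\frac{pp'}{p-p'}<2+\frac{2\alpha}{2n-\alpha}=\frac{4n}{2n-\alpha}$, i.e. $\frac1{p'}-\frac1p>\frac{2n-\alpha}{4n}$. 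Combining this with the requirement $p'>\frac{4n}{2n+\alpha}$, equivalently $\frac1{p'}<\frac{2n+\alpha}{4n}$, I am looking for $p'$ with
$$
\frac{2n-\alpha}{4n}+\frac1p<\frac1{p'}<\frac{2n+\alpha}{4n}.
$$
Such a $p'$ exists exactly when $\frac1p<\frac{2n+\alpha}{4n}-\frac{2n-\alpha}{4n}=\frac{\alpha}{2n}$, that is, $p>2n/\alpha$, which is precisely the hypothesis. (The left inequality also forces $\frac1{p'}>\frac1p$, hence $p'<p$, so the H\"older pair $\frac p{p'},\frac p{p-p'}$ is legitimate.) Fixing any admissible $p'$ and applying Proposition \ref{prop:reproduce2} completes the proof.

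I do not expect a real obstacle here: the corollary is a formal consequence of Proposition \ref{prop:reproduce2} together with the $L^p$-estimate of Theorem \ref{th:Main}, exactly parallel to the planar Corollary \ref{cor:BerezinReproduce1}. The only point that needs care is the exponent bookkeeping carried out above --- verifying that the lower bound on $p'$ imposed by the reproducing property and the upper bound on $p'$ imposed by the integrability of $K_\Omega(\cdot,z)$ are simultaneously satisfiable, with $p=2n/\alpha$ as the exact threshold. This works out cleanly because $\frac{4n}{2n+\alpha}$ and $\frac{4n}{2n-\alpha}$ are conjugate exponents, so the admissible range of $p'$ is automatically compatible with the hypothesis of Proposition \ref{prop:reproduce2} and no separate check (e.g.\ that the conjugate exponent of $p'$ stays below $\beta(\Omega)$) is needed. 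Finally, it is worth noting that, in contrast to the Bergman projection $P_\Omega$, the Berezin transform $T_\Omega$ does not reproduce $A^2(\Omega)$ for free, so the argument above is genuinely required over the whole range $p>2n/\alpha$.
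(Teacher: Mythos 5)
Your argument is correct and is essentially the proof the paper is sketching with the word ``Similarly'': set $h=fK_\Omega(\cdot,z)/K_\Omega(z)$, push it into $A^{p'}(\Omega)$ by H\"older using the $L^r$--integrability of $K_\Omega(\cdot,z)$ from Theorem \ref{th:Main}, and then apply the reproducing property of Proposition \ref{prop:reproduce2} at $z$. The only (cosmetic) difference is that the paper's one--variable version fixes the specific intermediate exponent $p'=\tfrac{2p}{p+1}$, which is exactly the midpoint of your admissible interval $\bigl(\tfrac{2n-\alpha}{4n}+\tfrac1p,\ \tfrac{2n+\alpha}{4n}\bigr)$ for $1/p'$, whereas you argue more flexibly that any $p'$ in that interval works; your bookkeeping, including the observation that $\tfrac{4n}{2n+\alpha}$ and $\tfrac{4n}{2n-\alpha}$ are conjugate so the two constraints coincide at $p=2n/\alpha$, is right.
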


  \section{Estimate of the pluricomplex Green function}

The goal of this section is to show the following

\begin{proposition}\label{prop:GreenEstimate2}
Let\/ $\Omega\subset {\mathbb C}^n$ be a bounded domain with $\alpha(\Omega)>0$. There exists a constant $C\gg 1$ such that
\begin{equation}\label{eq:GreenEstimate2}
\{g_\Omega(\cdot,w)<-1\}\subset \{\varrho>-C\nu(w)\},\ \ \ w\in \Omega,
\end{equation}
where $\nu=|\varrho|(1+|\log|\varrho||)^n$.
\end{proposition}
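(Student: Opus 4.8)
The plan is to establish the equivalent pointwise statement: there is $C\gg1$ with $g_\Omega(z,w)\ge-1$ whenever $\varrho(z)\le-C\nu(w)$. First I dispose of the trivial range. Fixing $0<\alpha<\alpha(\Omega)$ and $\rho\in PSH^-(\Omega)\cap C(\Omega)$ with $-\rho\le C_\alpha\delta^\alpha$, one gets $-\varrho\le C_\alpha'\delta^\alpha$, so $\varrho$ is bounded below on the bounded domain $\Omega$; since $\nu(w)=|\varrho(w)|(1+|\log|\varrho(w)||)^n$ is bounded away from $0$ once $|\varrho(w)|$ is, the asserted inclusion is automatic unless $|\varrho(w)|$ is small, so I may and do assume $|\varrho(w)|$ as small as needed. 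The same choice of $\rho$ also gives $\delta(w)\ge c\,|\varrho(w)|^{1/\alpha}$ and, via Walsh's gluing trick, the quasi-H\"older estimate of Lemma~\ref{lm:Green_Holder2}, $\varrho(z_2)\ge r\varrho(z_1)-C|z_1-z_2|^\alpha$; these are the only quantitative facts about $\varrho$ I will use.

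The heart of the matter is to produce, for each admissible pair $(z,w)$, a competitor in the definition of $g_\Omega(\cdot,w)$, i.e.\ a function $u\in PSH^-(\Omega)$ with a logarithmic pole at $w$ and with $u(z)\ge-1$; then $g_\Omega(z,w)\ge u(z)\ge-1$. This is where Blocki's technique (\cite{BlockiGreen}; see also \cite{HerbortGreen}) enters. As in the proof of Proposition~\ref{prop:GreenLowerEstimate}, one glues — by taking maxima — the local pole profile $\log(|\zeta-w|/R)$ near $w$ onto a minorant manufactured out of $\varrho$ away from $w$; the point of using $\varrho$ rather than $|\zeta-w|$ is that $\varrho$ ``sees'' the shielding of $w$ by thin parts of $\Omega$, which a Euclidean profile cannot. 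The single gluing used for Proposition~\ref{prop:GreenLowerEstimate} yields $u(z)\ge-1$ only in the range $|\varrho(z)|\lesssim\mu(w)$: the obstruction is that $w$ must lie on the pole side of the cut, a side that can reach only down to the level $\varrho(w)$, so that the admissible slope of the $\varrho$-piece is forced to be $\simeq(1+|\log|\varrho(w)||)/|\varrho(w)|$ by Lemma~\ref{lm:Green_Holder2} (which turns $\{\varrho=\varrho(w)/2\}$ into $|\zeta-w|\gtrsim|\varrho(w)|^{1/\alpha}$). To push the control from the $\mu(w)$-level to the $\nu(w)$-level I would refine the construction over a suitable chain of intermediate $\varrho$-scales running from $|\varrho(w)|$ down to $\simeq\nu(w)$, adjoining on each slab a further $\varrho$-linear minorant whose admissibility is again dictated by Lemma~\ref{lm:Green_Holder2}; the logarithmic losses incurred at the successive cuts are then arranged to compound, over a chain whose length is governed by $n$, into exactly the extra factor $(1+|\log|\varrho(w)||)^{n}$ that is built into $\nu$.

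The main obstacle is precisely this quantitative iteration: one must verify at every scale that the glued function stays plurisubharmonic and $\le0$ (the matching inequalities at the intermediate levels), keep the pole intact, and track how the slab-by-slab constraints accumulate into the $n$-th power of the logarithm. I would also remark that when $n=1$ the pluricomplex Green function is the symmetric classical Green function, so Proposition~\ref{prop:GreenEstimate2} then follows painlessly from Proposition~\ref{prop:GreenLowerEstimate} just by exchanging the roles of $z$ and $w$ (with an elementary estimate of $\mu^{-1}$); for $n\ge2$ this shortcut is unavailable, since $g_\Omega$ need not be symmetric, which is why the direct construction is needed.
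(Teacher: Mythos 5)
Your preliminary reductions and observations are correct — the restriction to small $|\varrho(w)|$, the quasi-H\"older estimate (Lemma~\ref{lm:Green_Holder2}), the lower bound $\delta(w)\gtrsim|\varrho(w)|^{1/\alpha}$, and the accurate diagnosis that the single Walsh gluing of Proposition~\ref{prop:GreenLowerEstimate} only controls $g_\Omega(z,w)\ge-1$ in the range $|\varrho(z)|\lesssim\mu(w)$. But the mechanism you then propose to reach the $\nu(w)$-level — iterating the Walsh gluing over a chain of intermediate $\varrho$-slabs, with the chain length somehow ``governed by $n$'' — is not substantiated, and I do not think it can be. Each Walsh gluing step uses only the quasi-H\"older continuity of $\varrho$, which is a scalar inequality with no dimensional input; nothing in that construction distinguishes $n=1$ from $n\ge 2$, so there is no visible source for the factor $(1+|\log|\varrho(w)||)^n$ you need. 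You acknowledge this yourself (``the main obstacle is precisely this quantitative iteration''), and that obstacle is in fact the whole proof.

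You also write off the symmetry route as unavailable when $n\ge2$, and this is precisely where the actual argument lives. The pluricomplex Green function is not symmetric, but Blocki's comparison via the complex Monge--Amp\`ere operator (Lemma~\ref{lm:GreenSymmetric}, which combines Demailly's identity $(dd^c g_\Omega(\cdot,w))^n=(2\pi)^n\delta_w$ with the integral inequality of Theorem~\ref{th:Blocki1993}) supplies a usable \emph{quasi}-symmetry: there is $\tilde\zeta\in\overline{B}(\zeta,\varepsilon)$ with
$|g_\Omega(\tilde{\zeta},w)|^n\le n!\,(\log R/\varepsilon)^{n-1}\,|g_\Omega(w,\zeta)|$.
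Evaluating $g_\Omega(\cdot,\zeta)$ at $w$ is the easy direction, controlled exactly by the competitor built as in Proposition~\ref{prop:GreenLowerEstimate}; passing back to $g_\Omega(\zeta,w)$ via this inequality, and then replacing $\tilde\zeta$ by $\zeta$ using a Walsh-type perturbation of the relative extremal function $\Psi$ of $\overline{B}(w,\varepsilon)$, is what produces the bound. The $n$-th root together with the factor $(\log R/\varepsilon)^{n-1}\sim|\log|\varrho(w)||^{n-1}$ is exactly what turns the $\mu(w)$-threshold into $\nu(w)=|\varrho(w)|(1+|\log|\varrho(w)||)^n$. This Monge--Amp\`ere step is the missing idea in your proposal; without it, the $n$-dependent logarithmic loss has no origin, and the iterated-gluing plan is not a proof.
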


We will follow the argument of Blocki \cite{BlockiGreen} with necessary modifications. The key observation is the following

\begin{lemma}[cf. \cite{BlockiGreen}]\label{lm:GreenSymmetric}
 Let $\Omega\subset {\mathbb C}^n$ be a bounded hyperconvex domain.  Suppose $\zeta,w$ are two points in $\Omega$ such that the closed balls $\overline{B}(\zeta,\varepsilon),\,\overline{B}(w,\varepsilon)\subset {\mathbb C}^n$ and $\overline{B}(\zeta,\varepsilon)\cap \overline{B}(w,\varepsilon)=\emptyset$. Then there exists $\tilde{\zeta}\in \overline{B}(\zeta,\varepsilon)$ such that
\begin{equation}\label{eq:BlockiInequality}
|g_\Omega(\tilde{\zeta},w)|^n \le n!(\log R/\varepsilon)^{n-1}|g_\Omega(w,\zeta)|
\end{equation}
where $R:={\rm diam}(\Omega)$
\end{lemma}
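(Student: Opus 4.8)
The plan is to follow Blocki's symmetrization trick, using the fact that the pluricomplex Green function is, up to the logarithmic-pole normalization, controlled by integration along the boundary of small balls. First I would normalize: since $\overline{B}(\zeta,\varepsilon)\subset\Omega$ and $g_\Omega(\cdot,w)$ is a negative psh function with a logarithmic pole only at $w\notin \overline{B}(\zeta,\varepsilon)$, the function $z\mapsto g_\Omega(z,w)$ is pluriharmonic (at least psh and $\le 0$) on a neighborhood of $\overline{B}(\zeta,\varepsilon)$; I will compare it there with the function $\log(|z-w|/R)$, which is negative on $\Omega$. The key point is to bound the average of $|g_\Omega(\cdot,w)|$ over the ball $\overline{B}(\zeta,\varepsilon)$ (or its boundary sphere) in terms of $|g_\Omega(w,\zeta)|$, so that the \emph{minimum} of $|g_\Omega(\cdot,w)|$ on $\overline{B}(\zeta,\varepsilon)$ — which gives the desired $\tilde\zeta$ — is no larger than that average.

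The central estimate is the following inequality, essentially due to Demailly and used by Blocki: for a psh function $u$ on $\Omega$ with $u\le 0$ and a single logarithmic pole of weight one at $w$,
$$
\frac{1}{\sigma_{2n-1}\varepsilon^{2n-1}}\int_{\partial B(\zeta,\varepsilon)} |u|\, d\sigma \;\le\; \Big(\log\frac R\varepsilon\Big)^{n-1}\cdot(\text{something involving } |u(w)\text{-type data}),
$$
but the clean way to run it is via the Lelong–Jensen formula applied to $g_\Omega(\cdot,w)$ against the Monge–Amp\`ere measure of $\log|z-\zeta|$. Concretely, I would apply the Lelong–Jensen formula on the sublevel sets $\{z:\log|z-\zeta|<\log\varepsilon\}$ to the psh function $g_\Omega(\cdot,w)$; the boundary term produces the sphere average of $g_\Omega(\cdot,w)$, the singular term at $\zeta$ vanishes because $g_\Omega(\cdot,w)$ is finite (indeed pluriharmonic) near $\zeta$, and the remaining term is $\int (dd^c g_\Omega(\cdot,w))$-mass against a logarithmic kernel. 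Iterating this $n$ times (the factor $n!(\log R/\varepsilon)^{n-1}$ comes precisely from $n$ nested applications, each contributing one $\log R/\varepsilon$ and a combinatorial factor) and using the symmetry-type bound $\int_\Omega (dd^c\log|\cdot-\zeta|)^{k}\wedge(dd^c g_\Omega(\cdot,w))^{n-k}\le |g_\Omega(w,\zeta)|\cdot(\cdots)$ — which is where the total Monge–Amp\`ere mass $(2\pi)^n$ of $g_\Omega(\cdot,w)$ and the comparison $g_\Omega(z,w)\ge \log(|z-w|/R)$ enter — yields
$$
\min_{z\in \overline{B}(\zeta,\varepsilon)} |g_\Omega(z,w)|^n \;\le\; n!\,(\log R/\varepsilon)^{n-1}\,|g_\Omega(w,\zeta)|,
$$
and choosing $\tilde\zeta$ to realize this minimum gives \eqref{eq:BlockiInequality}.

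I expect the main obstacle to be the bookkeeping in the iterated Lelong–Jensen / comparison argument: one must carefully track the mixed Monge–Amp\`ere currents $(dd^c g_\Omega(\cdot,w))^j\wedge(dd^c\log|\cdot-\zeta|)^{n-j}$ on the correct domains, justify that the balls $\overline{B}(\zeta,\varepsilon),\overline{B}(w,\varepsilon)$ being disjoint and contained in $\Omega$ makes all boundary contributions at $w$ controllable (this is exactly where disjointness is used — to keep the pole of $g_\Omega(\cdot,w)$ away from the ball around $\zeta$), and to get the constant to come out as $n!(\log R/\varepsilon)^{n-1}$ rather than something weaker. Since this lemma is quoted from \cite{BlockiGreen}, the honest approach is to cite Blocki for the core symmetrization estimate and only verify that his hypotheses (bounded hyperconvex $\Omega$, disjoint closed balls inside $\Omega$) are met here; the ``necessary modifications'' promised in the text are minor and amount to keeping $\varepsilon$ as a free parameter rather than sending it to a specific value.
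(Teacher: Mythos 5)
Your sketch takes a genuinely different route from the paper's. The paper does \emph{not} iterate the Lelong--Jensen formula by hand. Instead it reduces the lemma to two packaged facts: Demailly's theorem that $(dd^c g_\Omega(\cdot,w))^n=(2\pi)^n\delta_w$ and that $(dd^c\max\{g_\Omega(\cdot,\zeta),-\eta\})^n$ has total mass $(2\pi)^n$; and Blocki's $L^n$ comparison inequality from \cite{Blocki93},
$\int_\Omega |u|^n (dd^c v)^n\le n!\,\|v\|_\infty^{n-1}\int_\Omega |v|(dd^c u)^n$,
applied with $u=\max\{g_\Omega(\cdot,w),-t\}$ and $v=\max\{g_\Omega(\cdot,\zeta),-\eta\}$, $\eta=\log R/\varepsilon$. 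Sending $t\to\infty$ turns the right-hand side into $n!(2\pi)^n\eta^{n-1}|g_\Omega(w,\zeta)|$, and since $\{g_\Omega(\cdot,\zeta)=-\eta\}\subset\overline B(\zeta,\varepsilon)$ (from $g_\Omega(z,\zeta)\ge\log|z-\zeta|/R$), the measure on the left is supported there with total mass $(2\pi)^n$; taking the minimum of $|g_\Omega(\cdot,w)|^n$ on $\overline B(\zeta,\varepsilon)$ closes the argument. This is cleaner than your iterated sphere-average scheme because the ``$n!\,(\log R/\varepsilon)^{n-1}$ from $n$ nested Lelong--Jensen steps'' bookkeeping you flag as the main obstacle is exactly the content of Blocki's inequality, which you would essentially be re-proving. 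Your approach, if carried out carefully, could work, but as written it is a plan, not a proof, and it is harder than the route the paper takes.

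Two concrete inaccuracies to fix. First, $g_\Omega(\cdot,w)$ is \emph{not} pluriharmonic on a neighborhood of $\overline B(\zeta,\varepsilon)$ when $n>1$; it is only maximal psh there, i.e.\ $(dd^c g_\Omega(\cdot,w))^n=0$, not $dd^c g_\Omega(\cdot,w)=0$. What the paper actually uses is merely that $g_\Omega(\cdot,w)$ is \emph{continuous} on $\overline B(\zeta,\varepsilon)$ (guaranteed because the disjointness of the balls keeps the pole at $w$ away), so the minimum defining $\tilde\zeta$ is attained. Second, you compare $g_\Omega(\cdot,w)$ with $\log(|\cdot-w|/R)$, but the crucial comparison in the paper is for $g_\Omega(\cdot,\zeta)$, used to trap the Monge--Amp\`ere mass of the truncated Green function inside $\overline B(\zeta,\varepsilon)$. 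If you do decide to just cite Blocki as your fallback suggests, the precise reference you need is the inequality quoted above from \cite{Blocki93}, together with Demailly's Monge--Amp\`ere computations from \cite{Demailly87}; citing \cite{BlockiGreen} alone would leave the reduction unexplained.
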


For the sake of completeness, we include a proof here, which relies heavily on the following fundamental results.

\begin{theorem}[cf. \cite{Demailly87}]\label{th:Demailly}
Let $\Omega$ be a bounded hyperconvex domain in ${\mathbb C}^n$.
\begin{enumerate}
\item For every $w\in \Omega$, one has $(dd^c g_\Omega(\cdot,w))^n=(2\pi)^n\delta_w$ where $\delta_w$ denotes the Dirac measure at $w$.
\item For every $\zeta\in \Omega$ and $\eta>0$, one has $\int_\Omega (dd^c \max\{g_\Omega(\cdot,\zeta),-\eta\})^n=(2\pi)^n$.
\end{enumerate}
\end{theorem}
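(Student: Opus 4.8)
The plan is to recover Theorem~\ref{th:Demailly} from the Bedford--Taylor theory of the complex Monge--Amp\`ere operator, following Demailly~\cite{Demailly87}; the only input beyond that theory is the elementary identity $\bigl(dd^c\log|\,\cdot\,|\bigr)^n=(2\pi)^n\delta_0$ on $\C^n$. Write $g:=g_\Omega(\cdot,w)$. The first, soft, step is to record that, $\Omega$ being bounded and hyperconvex, $g$ is a \emph{continuous} negative psh function on $\Omega$ with $g(z)\to0$ as $z\to\partial\Omega$ and $g(z)=\log|z-w|+O(1)$ near $w$: boundary vanishing follows by comparing $g$ from below with a suitable multiple of a continuous negative psh exhaustion of $\Omega$ (after subtracting a large multiple of $\log|z-w|$), and continuity up to $\overline\Omega$ follows from Walsh's envelope technique~\cite{Walsh}, exactly as in the proof of Lemma~\ref{lm:Green_Holder2}. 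In particular $(dd^c g)^n$ is a well-defined positive measure on $\Omega$, since $g$ is locally bounded off the single point $w$.

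For part~(1) the structural observation is that $g$ is \emph{maximal} on $\Omega\setminus\{w\}$: if $v\in PSH(U)$ on an open $U\subset\subset\Omega\setminus\{w\}$ satisfies $\limsup_{U\ni z\to\partial U}(v(z)-g(z))\le0$, then gluing $\max\{v,g\}$ on $U$ with $g$ on $\Omega\setminus U$ yields a negative psh competitor in the envelope defining $g$, whence $v\le g$ on $U$. By the Bedford--Taylor characterization of maximality this gives $(dd^c g)^n=0$ on $\Omega\setminus\{w\}$, so $(dd^c g)^n=m\,\delta_w$ for some $m\ge0$. To compute $m$, the Lelong number of $g$ at $w$ being exactly $1$, Demailly's inequality comparing Monge--Amp\`ere mass with the Lelong number gives $m\ge(2\pi)^n$; for the reverse inequality I would run a comparison-theorem argument on a small ball $B(w,\rho)\subset\subset\Omega$, using that $g$ differs from $\log(|z-w|/\rho)$ by a bounded amount near $w$, that $\bigl(dd^c\log(|\,\cdot\,-w|/\rho)\bigr)^n=(2\pi)^n\delta_w$, and that $g$ is maximal away from $w$, to obtain $m\le(2\pi)^n$. (Equivalently: for each $c<0$ the sublevel set $\Omega_c:=\{g<c\}$ is again bounded hyperconvex with $g_{\Omega_c}(\cdot,w)=g-c$, so the mass $m$ is the same for every $\Omega_c$, and letting $c\to-\infty$ reduces the computation to the ball case.)

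For part~(2) set $g_\eta:=\max\{g_\Omega(\cdot,\zeta),-\eta\}$, a bounded continuous psh function on $\Omega$. On the open set $\{g_\Omega(\cdot,\zeta)<-\eta\}$, which is a neighborhood of $\zeta$, one has $g_\eta\equiv-\eta$; on $\{g_\Omega(\cdot,\zeta)>-\eta\}$ one has $g_\eta=g_\Omega(\cdot,\zeta)$, which by part~(1) is maximal there since $\zeta$ lies in the complementary set; in both regions $(dd^c g_\eta)^n=0$, so $(dd^c g_\eta)^n$ is carried by the compact set $\{g_\Omega(\cdot,\zeta)=-\eta\}\subset\subset\Omega$. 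Next, the total mass $\int_\Omega(dd^c g_\eta)^n$ is independent of $\eta$: for $0<\eta<\eta'$ the functions $g_\eta$ and $g_{\eta'}$ coincide outside $\{g_\Omega(\cdot,\zeta)<-\eta\}$, so $g_\eta-g_{\eta'}$ has compact support contained in any $\Omega'$ with $\{g_\Omega(\cdot,\zeta)\le-\eta\}\subset\subset\Omega'\subset\subset\Omega$, and integrating $dd^c(g_\eta-g_{\eta'})\wedge\sum_{j=0}^{n-1}(dd^c g_\eta)^j\wedge(dd^c g_{\eta'})^{n-1-j}$ over $\Omega'$ by parts, where there are no boundary terms and the summed current is closed, gives $\int_\Omega(dd^c g_\eta)^n=\int_\Omega(dd^c g_{\eta'})^n$. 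Finally, to identify this common value: as $\eta\to\infty$, $g_\eta\downarrow g_\Omega(\cdot,\zeta)$ and the equal total masses are all supported in a fixed small ball around $\zeta$ on sets shrinking to $\{\zeta\}$, so by the continuity of the Monge--Amp\`ere operator along this decreasing sequence together with part~(1) the common value equals $\int_\Omega(dd^c g_\Omega(\cdot,\zeta))^n=(2\pi)^n$; alternatively one compares directly with the model $\max\{\log(|\,\cdot\,-\zeta|/\rho),-\eta'\}$ on a small ball, whose mass is $(2\pi)^n$.

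The real obstacle is not a single clever idea but the careful Monge--Amp\`ere bookkeeping in a merely continuous, non-smooth setting: proving continuity of $g$ up to $\overline\Omega$, justifying the Bedford--Taylor maximality and comparison statements and the local mass computations on the only-Lipschitz sublevel sets $\Omega_c$, carrying out the integration by parts for continuous psh data, and---most delicately---controlling the Monge--Amp\`ere mass \emph{at the logarithmic pole} when passing to the limit $\eta\to\infty$, where the limit function is unbounded, so that no mass is lost or created at $\zeta$. Each of these is handled by the standard Bedford--Taylor--Demailly machinery, but each requires genuine care in the hyperconvex, non-smooth situation.
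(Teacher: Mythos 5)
The paper offers no proof of this statement: Theorem \ref{th:Demailly} is imported verbatim from Demailly \cite{Demailly87} (together with the Bedford--Taylor theory it rests on), so there is no in-paper argument to compare yours against. Your reconstruction follows the standard route of Demailly's original proof --- continuity of $g_\Omega(\cdot,w)$ on a bounded hyperconvex domain, maximality of $g_\Omega(\cdot,w)$ off the pole and hence vanishing of $(dd^cg_\Omega(\cdot,w))^n$ there, the Lelong-number/comparison-theorem computation of the point mass, the Stokes argument showing that the total mass of $(dd^c\max\{g_\Omega(\cdot,\zeta),-\eta\})^n$ is independent of $\eta$ and carried by $\{g_\Omega(\cdot,\zeta)=-\eta\}$, and the limit $\eta\to\infty$ --- and is correct in outline. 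The one caveat worth recording is that in Demailly's framework the measure $(dd^cg)^n$ at a logarithmic pole is essentially \emph{defined} through the truncations $\max\{g,-\eta\}$, so the logically cleanest presentation establishes (2) first (by direct comparison with the model $\max\{\log(|\cdot-\zeta|/\rho),-\eta\}$ on a small ball surrounding the pole) and then obtains (1) as the $\eta\to\infty$ limit; your order, (1) before (2), is legitimate only because you compute the point mass $m$ in (1) by the comparison theorem against $\log|\cdot-w|$ rather than by appealing back to the truncated masses, as you indicate. With that understood, the remaining steps are exactly the standard Bedford--Taylor--Demailly machinery you cite.
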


\begin{theorem}[cf. \cite{Blocki93}, see also \cite{BlockiLecture}]\label{th:Blocki1993}
 Let $\Omega$ be a
bounded domain in ${\mathbb C}^n$. Assume that $u,v\in PSH^-\cap L^\infty(\Omega)$ are non-positive psh functions such
that $u=0$ on $\partial \Omega$. Then
\begin{equation}\label{eq:Blocki1993}
\int_\Omega |u|^n (dd^c v)^n\le n! \|v\|_\infty^{n-1} \int_\Omega
|v|(dd^c u)^n.
\end{equation}
\end{theorem}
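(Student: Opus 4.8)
The plan is to reduce to smooth data and then run an $n$-fold integration by parts, the only inequalities used being the discarding of nonnegative ``gradient'' terms $du\wedge d^cu\wedge(\text{positive current})$ and one replacement of $-v$ by $\|v\|_\infty$ at each step. For $0\le k\le n$ put
$$
a_k:=\int_\Omega(-u)^{n-k}(dd^cu)^k\wedge(dd^cv)^{n-k},
$$
so that $a_0=\int_\Omega|u|^n(dd^cv)^n$ is the left-hand side and $a_n=\int_\Omega(dd^cu)^n$; all the mixed Monge--Amp\`ere currents in sight are well defined by Bedford--Taylor theory since $u,v$ are bounded. The assertion will follow from two claims: \emph{(i)} $a_k\le(n-k)\,\|v\|_\infty\,a_{k+1}$ for $0\le k\le n-2$, and \emph{(ii)} $a_{n-1}\le\int_\Omega(-v)(dd^cu)^n$. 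Granting these, iterating (i) for $k=0,1,\dots,n-2$ and then applying (ii) gives $a_0\le\bigl(n(n-1)\cdots 2\bigr)\|v\|_\infty^{\,n-1}\int_\Omega(-v)(dd^cu)^n=n!\,\|v\|_\infty^{\,n-1}\int_\Omega|v|(dd^cu)^n$, which is exactly what is claimed.

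To prove (i) I would fix $0\le k\le n-2$, set $T:=(dd^cu)^k\wedge(dd^cv)^{n-k-1}$ (a closed positive $(n-1,n-1)$-current), write $a_k=\int_\Omega(-u)^{n-k}\,dd^cv\wedge T$, and integrate by parts, transferring the $dd^c$ from $v$ onto $(-u)^{n-k}$. Because $k\le n-2$ one has $(-u)^{n-k}=0$ and $d^c\bigl[(-u)^{n-k}\bigr]=-(n-k)(-u)^{n-k-1}d^cu=0$ on $\partial\Omega$, so the boundary contribution vanishes and $a_k=\int_\Omega v\,dd^c\bigl[(-u)^{n-k}\bigr]\wedge T$. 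Expanding $dd^c\bigl[(-u)^{n-k}\bigr]=-(n-k)(-u)^{n-k-1}dd^cu+(n-k)(n-k-1)(-u)^{n-k-2}\,du\wedge d^cu$, the last term wedged with $T$ and integrated against $v\le0$ gives a nonpositive quantity, so it may be dropped, leaving $a_k\le(n-k)\int_\Omega(-v)(-u)^{n-k-1}dd^cu\wedge T$; bounding $-v\le\|v\|_\infty$ yields precisely $(n-k)\|v\|_\infty\,a_{k+1}$.

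For (ii), integrating by parts in $a_{n-1}=\int_\Omega(-u)\,dd^cv\wedge(dd^cu)^{n-1}$ and transferring $dd^c$ onto $-u$, the hypothesis $u=0$ on $\partial\Omega$ removes the first boundary term but leaves $\int_{\partial\Omega}v\,d^cu\wedge(dd^cu)^{n-1}$; here one invokes the standard fact that $d^cu\wedge(dd^cu)^{n-1}$ restricts to a nonnegative measure on $\partial\Omega$ (seen by exhausting $\Omega$ by the sublevel sets $\{u<-\varepsilon\}$), so that this term is $\le0$ since $v\le0$, whence $a_{n-1}\le\int_\Omega(-v)(dd^cu)^n$. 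Finally one descends from smooth $u,v$ — where the domain may be taken to be a smooth sublevel set $\{u<-\varepsilon\}$, so that $u+\varepsilon$ vanishes on its boundary — to the stated class, by approximating $u$ and $v$ from above by smooth psh functions, using the Bedford--Taylor weak continuity of mixed Monge--Amp\`ere currents, and letting $\varepsilon\to0$. I expect the algebraic iteration to be routine; the genuine work lies in justifying the integration-by-parts identity for these currents up to the (possibly irregular) boundary and in the sign of the boundary term in (ii), both of which are standard in pluripotential theory but must be set up carefully via the $\{u<-\varepsilon\}$-exhaustion and a limiting argument.
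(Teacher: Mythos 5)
The paper does not give a proof of this theorem but simply cites Blocki, and your reconstruction is essentially Blocki's argument: an $n$-fold integration by parts on the quantities $a_k$, discarding the nonnegative gradient term $du\wedge d^cu\wedge T$ at each stage, bounding $-v$ by $\|v\|_\infty$ in the first $n-1$ stages, and controlling the single surviving boundary integral in the last stage via the sign of the boundary Monge--Amp\`ere measure $d^cu\wedge(dd^cu)^{n-1}$. Your bookkeeping is correct and produces exactly the constant $n!\,\|v\|_\infty^{n-1}$, and the regularization you defer is the standard Bedford--Taylor exhaustion by sublevel sets $\{u<-\varepsilon\}$ carried out in detail in Blocki's lecture notes.
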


\begin{proof}[Proof of Lemma \ref{lm:GreenSymmetric}]
Let $\eta=\log R/\varepsilon$. Since $g_\Omega(z,\zeta)\ge \log |z-\zeta|/R$, it follows that
$$
\{g_\Omega(\cdot,\zeta)=-\eta\}\subset \overline{B}(\zeta,\varepsilon).
$$
Applying first Theorem \ref{th:Blocki1993} with $u=\max\{g_\Omega(\cdot,w),-t\}$ and $v=\max\{g_\Omega(\cdot,\zeta),-\eta\}$ then letting $t\rightarrow +\infty$, we obtain
$$
\int_\Omega |g_\Omega(\cdot,w)|^n (dd^c\max\{g_\Omega(\cdot,\zeta),-\eta\})^n\le n!(2\pi)^n\eta^{n-1}|g_\Omega(w,\zeta)|
$$
in view of Theorem \ref{th:Demailly}/(1). Since  $\overline{B}(\zeta,\varepsilon)\cap \overline{B}(w,\varepsilon)=\emptyset$, it follows that $g_\Omega(\cdot,w)$ is continuous on  $\overline{B}(\zeta,\varepsilon)$, so that there exists $\tilde{\zeta}\in \overline{B}(\zeta,\varepsilon)$ such that
$$
|g_\Omega(\tilde{\zeta},w)|=\min_{\overline{B}(\zeta,\varepsilon)}|g_\Omega(\cdot,w)|.
$$
Since the measure $(dd^c\max\{g_\Omega(\cdot,\zeta),-\eta\})^n$ is supported on $\{g_\Omega(\cdot,\zeta)=-\eta\}$ with total mass $(2\pi)^n$, we immediately get (\ref{eq:BlockiInequality}).
\end{proof}

\begin{proof}[Proof of Proposition \ref{prop:GreenEstimate2}]
 Clearly, it suffices to consider the case when $w$ is sufficiently close to $\partial \Omega$. Fix $\zeta\in \Omega$ with $\varrho(\zeta)\le 2\varrho(w)$ for a moment.  Set $\varepsilon:=|\varrho(w)|^{2/\alpha}$. Since $\varepsilon \le C_\alpha^{2/\alpha} \delta(w)^2$, we see that $\overline{B}(w,\varepsilon)\subset \Omega$ provided  $\delta(w)\le \varepsilon_\alpha\ll1$.
For every $z\in \Omega$ with $\delta(z)\le \varepsilon$, we have
\begin{equation}\label{eq:Upperbound}
|\varrho(z)|\le C_\alpha \delta(z)^\alpha\le C_\alpha \varepsilon^\alpha={C}_\alpha |\varrho(w)|^2\ \ \ (\le |\varrho(w)|/2)
\end{equation}
provided $\delta(w)\le \varepsilon_\alpha\ll1$. It follows from (\ref{eq:GreenLowerBound}) and (\ref{eq:Upperbound}) that for every $\tau>0$ there exists $\varepsilon_\tau\ll \varepsilon_\alpha $ such that
\begin{equation}\label{eq:GreenModulus}
\sup_{\delta\le \varepsilon}|g_\Omega(\cdot,w)|\le \tau
\end{equation}
provided $\delta(w)\le \varepsilon_\tau$.
Since
 $$
 C_\alpha \delta(\zeta)^\alpha\ge -\varrho(\zeta)\ge -2\varrho(w)= 2\varepsilon^{\alpha/2}
 $$
 and Lemma \ref{lm:Green_Holder2} yields
 $$
 C_1 |\zeta-w|^\alpha\ge \frac32 \varrho(w)-\varrho(\zeta)\ge -\frac12 \varrho(w)=\frac12 \varepsilon^{\alpha/2},
 $$
 it follows that if $\delta(w)\le \varepsilon_\tau\ll1$ then $\overline{B}(\zeta,\varepsilon)\subset \Omega$ and
  \begin{equation}\label{eq:intersection}
  \overline{B}(\zeta,\varepsilon)\cap \overline{B}(w,\varepsilon)=\emptyset.
  \end{equation}
  By Lemma \ref{lm:GreenSymmetric}, there exists $\tilde{\zeta}\in \overline{B}(\zeta,\varepsilon)$ such that (\ref{eq:BlockiInequality}) holds.

Now set
$$
\Psi(z):=\sup\{u(z):u\in PSH^-(\Omega),\,u|_{\overline{B}(w,\varepsilon)}\le -1\}.
$$
We claim that
\begin{equation}\label{eq:GreenVsPsi}
g_\Omega(z,w)\ge \log R/\varepsilon\,\Psi(z),\ z\in \Omega\backslash B(w,\varepsilon);\ \ \  g_\Omega(z,w) \le \log \delta(w)/\varepsilon\,\Psi(z),\ z\in \Omega.
\end{equation}
To see this, first notice that
\begin{equation}\label{eq:GreenTwoSides}
\log \frac{|z-w|}R\le g_\Omega(z,w)\le \log \frac{|z-w|}{\delta(w)},\ \ \ z\in \Omega.
\end{equation}
Since
$$
u(z)=\left\{
\begin{array}{ll}
 \log|z-w|/R & {\rm if\ } z\in B(w,\varepsilon)\\
 \max\left\{\log|z-w|/R,\log R/\varepsilon\,\Psi(z)\right\} & {\rm if\ } z\in \Omega\backslash B(w,\varepsilon)
\end{array}
\right.
$$
is a negative psh function on $\Omega$ with a logarithmic pole at $w$, it follows that
$$
g_\Omega(z,w)\ge \log R/\varepsilon\,\Psi(z),\ \ \ z\in \Omega\backslash B(w,\varepsilon).
$$
Since (\ref{eq:GreenTwoSides}) implies $g_\Omega(\cdot,w)|_{\overline{B}(w,\varepsilon)}\le \log \varepsilon/\delta(w)$, so we have
$$
\Psi(z)\ge \frac{g_\Omega(z,w)}{\log \delta(w)/\varepsilon},\ \ \ z\in \Omega.
$$
By (\ref{eq:GreenModulus}) and (\ref{eq:GreenVsPsi}) we obtain
\begin{equation}\label{eq:PsiBound}
\sup_{\delta\le \varepsilon} |\Psi|\le \frac{\tau}{\log \delta(w)/\varepsilon}.
\end{equation}
Set $\tilde{\Omega}=\Omega-(\tilde{\zeta}-\zeta)$ and
 $$
v(z)=\left\{
\begin{array}{ll}
 \Psi(z) & {\rm if\ } z\in \Omega\backslash \tilde{\Omega}\\
 \max\left\{\Psi(z),\Psi(z+\tilde{\zeta}-\zeta)-\frac{\tau}{\log \delta(w)/\varepsilon}\right\} & {\rm if\ } z\in \Omega\cap \tilde{\Omega}.
\end{array}
\right.
$$
Since $\Omega\cap \partial \tilde{\Omega}\subset \{\delta\le \varepsilon\}$, it follows from (\ref{eq:PsiBound}) that $v\in PSH^-(\Omega)$.
Since
$$
\Psi(z)\le \frac{\log|z-w|/\delta(w)}{\log R/\varepsilon},\ \ \ z\in \Omega\backslash B(w,\varepsilon)
$$
in view of (\ref{eq:GreenTwoSides}) and (\ref{eq:GreenVsPsi}), and $z+\tilde{\zeta}-\zeta\in \overline{B}(w,2\varepsilon)$ if $z\in \overline{B}(w,\varepsilon)$,
it follows from the maximal principle that
$$
v|_{\overline{B}(w,\varepsilon)}\le -\frac{\log \delta(w)/(2\varepsilon)}{\log R/\varepsilon}.
$$
Thus
$$
\Psi(\tilde{\zeta})-\frac{\tau}{\log \delta(w)/\varepsilon}\le v(\zeta)\le \frac{\log \delta(w)/(2\varepsilon)}{\log R/\varepsilon}\Psi(\zeta).
$$
Combining with (\ref{eq:intersection}) and (\ref{eq:GreenVsPsi}), we obtain
$$
g_\Omega(\zeta,w)\ge \frac{(\log R/\varepsilon)^2}{\log \delta(w)/\varepsilon\cdot\log \delta(w)/(2\varepsilon)}\left(g_\Omega(\tilde{\zeta},w)-\tau\right)\ge C_3\left(g_\Omega(\tilde{\zeta},w)-\tau\right)
$$
since $\delta(w)\ge |\varrho(w)/C_\alpha|^{1/\alpha}=\sqrt{\varepsilon}/C_\alpha^{1/\alpha}$. If we choose $\tau=\frac1{2C_3}$, then
\begin{eqnarray*}
g_\Omega(\zeta,w) & \ge &   -C_3 (n!)^{1/n} (\log R/\varepsilon)^{1-1/n} |g_\Omega(w,\zeta)|^{1/n}-1/2\ \ \ ({\rm by\ }(\ref{eq:BlockiInequality}))\\
 & \ge & -C_4 |\log|\varrho(w)||^{1-1/n} \frac{|\varrho(w)\log |\varrho(\zeta)||^{1/n}}{|\varrho(\zeta)|^{1/n}}-1/2\ \ \ ({\rm by\ }(\ref{eq:GreenLowerBound}))\\
  & \ge & -C_5  \frac{|\varrho(w)|^{1/n}|\log |\varrho(w)||}{|\varrho(\zeta)|^{1/n}}-1/2
\end{eqnarray*}
since $\varrho(\zeta)\le 2\varrho(w)$. Thus
$$
\{g_\Omega(\cdot,w)<-1\}\cap \{\varrho\le 2\varrho(w)\}\subset \{\varrho> -C\nu(w) \}
$$
provided  $C\gg 1$. Since $\{\varrho>2 \varrho(w)\}\subset  \{\varrho> -C\nu(w) \}$ if $C\gg 1$, we conclude the proof.
\end{proof}

\section{Point-wise estimate of the normalized Bergman kernel and applications}

\begin{proof}[Proof of Theorem \ref{th:Off-diagonal}]
By Proposition \ref{prop:BerezinIntegral}, we  know that for every $0<r<1$ there exist constants $\varepsilon_r,C_r>0$ such that
  $$
  \int_{-\varrho\le \varepsilon} |K_\Omega(\cdot,w)|^2/K_\Omega(w)  \le C_{r}\, (\varepsilon/\mu(w))^{r}
 $$
  for all $\varepsilon\le \varepsilon_r \mu(w)$.
 Fix $z\in \Omega$ with $b(z):=C \nu(z)\le \varepsilon_r \mu(w)$ for a moment, where $C$ is the constant in (\ref{eq:GreenEstimate2}). Let $\chi:{\mathbb R}\rightarrow [0,1]$ be a smooth function satisfying $\chi|_{(0,\infty)}=0$ and $\chi|_{(-\infty,-\log 2)}=1$. We proceed the proof in a similar way as \cite{Chen99}. Notice that $g_\Omega(\cdot,z)$ is a continuous negative psh function on $\Omega\backslash \{z\}$ which satisfies
     $$
     -i\partial\bar{\partial} \log (-g_\Omega(\cdot,z))\ge i\partial \log (-g_\Omega(\cdot,z))\wedge \bar{\partial} \log (-g_\Omega(\cdot,z))
     $$
     as currents.  By virtue of the Donnelly-Fefferman estimate (cf. \cite{DonnellyFefferman}, see also \cite{BerndtssonCharpentier}), there exists a solution of the equation
   $$
 \bar{\partial} u=K_\Omega(\cdot,w)\bar{\partial}\chi(-\log(-g_\Omega(\cdot,z)))
 $$
 such that
 \begin{eqnarray*}
 \int_\Omega |u|^2 e^{-2n g_\Omega(\cdot,z)} & \le & C_0 \int_\Omega |K_\Omega(\cdot,w)|^2 |\bar{\partial}\chi(-\log(-g_\Omega(\cdot,z)))|^2_{-i\partial\bar{\partial}\log(-g_\Omega(\cdot,z))}
 e^{-2ng_\Omega(\cdot,z)}\\
 & \le & C_n \int_{\varrho>- b(z)} |K_\Omega(\cdot,w)|^2\ \ \ ({\rm by\ }(\ref{eq:GreenEstimate2}))\\
 & \le & C_{n,r} K_\Omega(w)(\nu(z)/\mu(w))^r.
 \end{eqnarray*}
 Set
 $$
 f:=K_\Omega(\cdot,w)\chi(-\log(-g_\Omega(\cdot,z)))-u.
 $$
 Clearly, we have $f\in {\mathcal O}(\Omega)$. Since $g_\Omega(\zeta,z)=\log |\zeta-z|+O(1)$ as $\zeta\rightarrow z$ and $u$ is holomorphic in a neighborhood of $z$, it follows that $u(z)=0$, i.e. $f(z)=K_\Omega(z,w)$. Moreover, we have
 \begin{eqnarray*}
 \int_\Omega |f|^2 & \le & 2\int_{\varrho>- b(z)} |K_\Omega(\cdot,w)|^2+2\int_\Omega |u|^2\\
 & \le & C_{n,r} K_\Omega(w)(\nu(z)/\mu(w))^r
 \end{eqnarray*}
 since $g_\Omega(\cdot,z)<0$.
 Thus we get
 $$
 K_\Omega(z)\ge \frac{|f(z)|^2}{\|f\|^2_{L^2(\Omega)}}\ge C_{n,r}^{-1}\frac{|K_\Omega(z,w)|^2}{K_\Omega(w)}(\mu(w)/\nu(z))^r,
 $$
and
  $$
  {\mathcal B}_\Omega(z,w)\le C_{n,r} (\nu(z)/\mu(w))^r.
  $$
    If $b(z)> \varepsilon_r \mu(w)$, then the inequality above trivially holds since $\frac{|K_\Omega(z,w)|^2}{K_\Omega(z)K_\Omega(w)}\le 1$. By symmetry of ${\mathcal B}_\Omega$, the assertion immediately follows.
 \end{proof}

  \begin{remark}
  It would be interesting to get point-wise estimates for $\frac{|S_\Omega(z,w)|^2}{S_\Omega(z)S_\Omega(w)}$ where $S_\Omega$ is the Szeg\"o kernel $($compare \cite{ChenFu11}$)$.
 \end{remark}

 \begin{proof}[Proof of Corollary \ref{cor:BergmanDistance}]
  Let $z\in \Omega$ be an arbitrarily fixed point which is sufficiently close to $\partial \Omega$. By the Hopf-Rinow theorem, there exists a Bergman geodesic $\gamma$ jointing $z_0$ to $z$, for $ds^2_B$ is complete on $\Omega$. We may choose a finite number of points $\{z_k\}_{k=1}^m\subset \gamma$ with the following order
  $$
  z_0\rightarrow z_1\rightarrow z_2\rightarrow \cdots \rightarrow z_m \rightarrow z,
  $$
  where
  $$
  |\varrho(z_{k+1})|(1+|\log|\varrho(z_{k+1})||)^{n+2}=|\varrho(z_k)|
  $$
  and
  $$
  |\varrho(z)|(1+|\log|\varrho(z)||)^{n+2} \ge |\varrho(z_m)|.
  $$
  Since
  \begin{eqnarray*}
   \frac{\nu(z_{k+1})}{\mu(z_k)} & = & \frac{|\varrho(z_{k+1})|}{|\varrho(z_k)|}(1+|\log |\varrho(z_{k+1})||)^n (1+|\log |\varrho(z_{k})||)\\
   & \le & \frac{|\varrho(z_{k+1})|}{|\varrho(z_k)|}(1+|\log |\varrho(z_{k+1})||)^{n+1}\\
   & = &  (1+|\log |\varrho(z_{k+1})||)^{-1},
  \end{eqnarray*}
 it follows from Theorem \ref{th:Off-diagonal} that there exists $k_0\in {\mathbb Z}^+$ such that  ${\mathcal B}_\Omega(z_k,z_{k+1})\le 1/4$ for all $k\ge k_0$. By (\ref{eq:BergmanVsSkwarczynski}), we get
  $$
  d_B(z_k,z_{k+1})\ge 1.
  $$
 Notice that
  \begin{eqnarray*}
  |\varrho(z_{k_0})| & = & |\varrho(z_{k_0+1})||\log|\varrho(z_{k_0+1})||^{n+2}\\
  & \le & |\varrho(z_{k_0+2})||\log|\varrho(z_{k_0+2})||^{2(n+2)}\\
  & \le & \cdots \ \  \le  |\varrho(z_m)||\log|\varrho(z_m)||^{(m-k_0)(n+2)}.
  \end{eqnarray*}
    Thus we have
  $$
  m-k_0\ge {\rm const.}\frac{|\log|\varrho(z_m)||}{\log|\log |\varrho(z_m)||}\ge {\rm const.}\frac{|\log|\varrho(z)||}{\log|\log |\varrho(z)||},
  $$
  so that
  \begin{eqnarray*}
  d_B(z,z_0) & \ge & \sum_{k=k_0}^{m-1} d_B(z_k,z_{k+1})\ge m-k_0-1\\
   & \ge &   {\rm const.}\frac{|\log|\varrho(z)||}{|\log|\log |\varrho(z)||}\\
   &\ge & {\rm const.}\frac{|\log \delta(z)|}{\log|\log\delta(z)|},
  \end{eqnarray*}
  since $|\varrho(z)|\le C_\alpha\delta^\alpha$ for any $\alpha<\alpha(\Omega)$.
 \end{proof}

  \begin{proof}[Proof of Proposition \ref{cor:Comparison}]
  For every $0<\alpha<\alpha(\Omega)$, we have $-\varrho\le C_\alpha\delta^\alpha$. Theorem \ref{th:Off-diagonal} then yields
   $$
  D_B(z_0,z)\ge \alpha|\log\delta(z)|
  $$
  as $z\rightarrow \partial \Omega$.
  Thus it suffices to show
  \begin{equation}\label{eq:Kobayashi}
  d_K(z,z_0)\le C |\log \delta(z)|
  \end{equation}
   as $z\rightarrow \partial \Omega$. To see this, let $F_{K}$ be the Kobayashi-Royden metric. Since $F_K$ is decreasing under holomorphic mappings, we conclude that
   $
   F_K(z;X)
   $
   is dominated by the KR metric of the ball $B(z,\delta(z))$. Thus $F_K(z;X)\le C |X|/\delta(z)$, from which (\ref{eq:Kobayashi}) immediately follows (compare the proof of Proposition 7.3 in \cite{BYChen}).
 \end{proof}

 In order to prove Corollary \ref{coro:biholomIneq}, we need the following elementary fact:

 \begin{lemma}\label{lm:WeightedCircular}
If\/ $\Omega\subset {\mathbb C}^n$ is a bounded weighted circular domain which contains the origin, then $K_\Omega(z,0)=K_\Omega(0)$ for any $z\in \Omega$.
\end{lemma}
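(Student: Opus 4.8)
The plan is to exploit the rotational symmetry encoded in the definition of a weighted circular domain. For $\theta\in\mathbb R$ set $\Phi_\theta(z)=(e^{ia_1\theta}z_1,\dots,e^{ia_n\theta}z_n)$; by hypothesis each $\Phi_\theta$ is a (linear) biholomorphic automorphism of $\Omega$ fixing the origin, and its complex Jacobian determinant is the constant $e^{i(a_1+\cdots+a_n)\theta}$, which has modulus $1$. First I would invoke the transformation law of the Bergman kernel under biholomorphisms: applied to $\Phi_\theta\colon\Omega\to\Omega$ it gives $K_\Omega(z,w)=K_\Omega(\Phi_\theta(z),\Phi_\theta(w))\,e^{i(a_1+\cdots+a_n)\theta}\,\overline{e^{i(a_1+\cdots+a_n)\theta}}=K_\Omega(\Phi_\theta(z),\Phi_\theta(w))$. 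Putting $w=0$ and using $\Phi_\theta(0)=0$, this shows the holomorphic function $h(z):=K_\Omega(z,0)$ satisfies $h\circ\Phi_\theta=h$ for every $\theta\in\mathbb R$.

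The remaining step is to show that a holomorphic function on the (connected) domain $\Omega$ that is invariant under all the $\Phi_\theta$ must be constant. Expanding $h$ in its Taylor series $h(z)=\sum_\alpha c_\alpha z^\alpha$ on a small polydisc about $0$, the identity $h\circ\Phi_\theta=h$ reads $\sum_\alpha c_\alpha e^{i\langle a,\alpha\rangle\theta}z^\alpha=\sum_\alpha c_\alpha z^\alpha$ with $\langle a,\alpha\rangle:=\sum_j a_j\alpha_j$, so by uniqueness of Taylor coefficients $c_\alpha=0$ whenever $\langle a,\alpha\rangle\neq 0$. Since every $a_j>0$ and $\alpha_j\ge 0$, one has $\langle a,\alpha\rangle>0$ for all $\alpha\neq 0$, hence $h$ is constant near the origin, and the identity theorem forces $h\equiv h(0)$ on all of $\Omega$. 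That is, $K_\Omega(z,0)=K_\Omega(0,0)=K_\Omega(0)$ for every $z\in\Omega$.

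No step here is genuinely delicate; the one point deserving attention is the passage from $\Phi_\theta$-invariance to constancy, where the strict positivity of the weights $a_j$ is essential (if some weight vanished the conclusion would fail, e.g.\ for a product of a disc with an arbitrary domain). As an alternative to the transformation law, one may argue directly on $L^2(\Omega)$: since $\Phi_\theta$ preserves Lebesgue measure, $U_\theta f:=f\circ\Phi_\theta$ is unitary on $L^2(\Omega)$ and maps $A^2(\Omega)$ onto itself, and because $\Phi_\theta$ fixes $0$ the reproducing property gives $\langle U_\theta g,K_\Omega(\cdot,0)\rangle=g(0)=\langle g,K_\Omega(\cdot,0)\rangle=\langle U_\theta g,U_\theta K_\Omega(\cdot,0)\rangle$ for all $g\in A^2(\Omega)$; surjectivity of $U_\theta$ then yields $K_\Omega(\cdot,0)=U_\theta K_\Omega(\cdot,0)$, and one concludes exactly as above.
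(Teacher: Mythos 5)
Your argument is correct and is essentially the paper's own: both use the automorphisms $\Phi_\theta$ together with the Bergman kernel transformation law (the constant Jacobian of modulus one cancels) to obtain $\Phi_\theta$-invariance of $K_\Omega(\cdot,0)$, and then conclude constancy by annihilating the nonconstant Taylor coefficients at the origin, using that $a_j>0$ forces $\sum a_j\alpha_j\neq 0$ for $\alpha\neq 0$, followed by the identity theorem. The closing $L^2$ reformulation is a nice extra but not needed.
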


\begin{proof}
For fixed $\theta\in {\mathbb R}$ we set $F_\theta(z):=(e^{ia_1\theta}z_1,\cdots, e^{ia_n \theta}z_n)$. By the transform formula of the Bergman kernel, we have
$$
K_\Omega(F_\theta(z),0)=K_\Omega(z,0),\ \ \ z\in \Omega.
$$
It follows that for any $n-$tuple $(m_1,\cdots,m_n)$ of non-negative integers,
$$
e^{i(a_1 m_1+\cdots+a_n m_n)\theta}\,\left.\frac{\partial^{m_1+\cdots+m_n}K_\Omega(z,0)}{\partial z_1^{m_1}\cdots \partial z_n^{m_n}}\right|_{z=0}= \left.\frac{\partial^{m_1+\cdots+m_n}K_\Omega(z,0)}{\partial z_1^{m_1}\cdots \partial z_n^{m_n}}\right|_{z=0},\ \ \ \forall\,\theta\in {\mathbb R},
$$
so that $\left.\frac{\partial^{m_1+\cdots+m_n}K_\Omega(z,0)}{\partial z_1^{m_1}\cdots \partial z_n^{m_n}}\right|_{z=0}=0$ if not all $m_j$ are zero. Taylor's expansion of $K_\Omega(z,0)$ at $z=0$ and the identity theorem of holomorphic functions yield $K_\Omega(z,0)=K_\Omega(0)$ for any $z\in \Omega$.
\end{proof}

  \begin{proof}[Proof of Corollary \ref{coro:biholomIneq}]
 By Lemma \ref{lm:WeightedCircular}, we have
 \begin{eqnarray*}
 {\mathcal B}_{\Omega_2}(F(z),0) & = & K_{\Omega_2}(0) K_{\Omega_2}(F(z))^{-1}
  \ge  C^{-1}\delta_2(F(z))^{2n}.
 \end{eqnarray*}
 On the other hand, Theorem \ref{th:Off-diagonal} implies
 $$
 {\mathcal B}_{\Omega_1}(z,F^{-1}(0))\le C_\alpha\,\delta_1(z)^\alpha.
 $$
 Since ${\mathcal B}_{\Omega_2}(F(z),0)={\mathcal B}_{\Omega_1}(z,F^{-1}(0))$, we conclude the proof.
 \end{proof}

 \section{Appendix: Examples of domains with positive hyperconvexity indices}

 We start with the following almost trivial fact:

 \begin{proposition}\label{prop:properHolo}
  Let\/ $\Omega_1,\Omega_2$ be two bounded domains in ${\mathbb C}^n$ such that there exists a biholomorphic map $F:\Omega_1\rightarrow \Omega_2$ which extends to a H\"older continuous map $\overline{\Omega}_1\rightarrow \overline{\Omega}_2$. If $\alpha(\Omega_2)>0$, then $\alpha(\Omega_1)>0$.
 \end{proposition}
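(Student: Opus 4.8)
The plan is to pull an appropriate exhaustion function back from $\Omega_2$ along $F$ and use the H\"older continuity of the boundary extension to transport the growth bound. First I would fix $0<\alpha_2<\alpha(\Omega_2)$ and choose $\rho_2\in C(\Omega_2)\cap PSH^-(\Omega_2)$ with $\{\rho_2<c\}\subset\subset\Omega_2$ for every $c<0$ and $-\rho_2\le C\delta_2^{\alpha_2}$, where $\delta_2$ denotes the boundary distance of $\Omega_2$. Set $\rho_1:=\rho_2\circ F$. Since $F$ is holomorphic, $\rho_1\in C(\Omega_1)\cap PSH^-(\Omega_1)$; and since $F$ is biholomorphic, $F^{-1}$ is continuous on $\Omega_2$, so $F^{-1}$ carries compact subsets of $\Omega_2$ to compact subsets of $\Omega_1$ and hence $\overline{\{\rho_1<c\}}\subset F^{-1}\bigl(\overline{\{\rho_2<c\}}\bigr)\subset\subset\Omega_1$ for each $c<0$. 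Thus $\rho_1$ is already a negative continuous psh exhaustion function of $\Omega_1$, and it remains only to bound $-\rho_1$ in terms of $\delta_1$.

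Next I would verify that the continuous extension $\overline{F}\colon\overline{\Omega}_1\to\overline{\Omega}_2$ maps $\partial\Omega_1$ into $\partial\Omega_2$. Indeed, if some $z^\ast\in\partial\Omega_1$ had $\overline{F}(z^\ast)=:p\in\Omega_2$, then choosing $z_j\in\Omega_1$ with $z_j\to z^\ast$ we would get $F(z_j)\to p$, whence by continuity of $F^{-1}$ on $\Omega_2$ we would have $z_j=F^{-1}(F(z_j))\to F^{-1}(p)\in\Omega_1$, contradicting $z_j\to z^\ast\in\partial\Omega_1$. This is the only place where the bijectivity of $F$ (and not merely its holomorphy) is needed, and it is the step I expect to require the most care to phrase cleanly — though it is entirely elementary.

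Finally I would combine the two ingredients. Let $L$ and $\gamma\in(0,1]$ be a H\"older constant and exponent for $\overline{F}$, so $|\overline{F}(x)-\overline{F}(y)|\le L|x-y|^\gamma$ for all $x,y\in\overline{\Omega}_1$. Given $z\in\Omega_1$, pick $z^\ast\in\partial\Omega_1$ with $|z-z^\ast|=\delta_1(z)$. Since $\overline{F}(z^\ast)\in\partial\Omega_2$ by the previous paragraph, $\delta_2(F(z))\le|F(z)-\overline{F}(z^\ast)|\le L\,\delta_1(z)^\gamma$, and therefore $-\rho_1(z)=-\rho_2(F(z))\le C\,\delta_2(F(z))^{\alpha_2}\le CL^{\alpha_2}\,\delta_1(z)^{\gamma\alpha_2}$. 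Hence $\alpha(\Omega_1)\ge\gamma\,\alpha_2$, and letting $\alpha_2\uparrow\alpha(\Omega_2)$ gives $\alpha(\Omega_1)\ge\gamma\,\alpha(\Omega_2)>0$, which proves the proposition.
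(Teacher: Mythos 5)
Your proof is correct and follows the same approach as the paper: pull back a growth-controlled exhaustion function $\rho_2$ along $F$, pick a boundary point $z^\ast$ realizing $\delta_1(z)$, and use the H\"older continuity of $\overline{F}$ together with $\overline{F}(z^\ast)\in\partial\Omega_2$ to transfer the bound $-\rho_2\le C\delta_2^{\alpha_2}$ to $-\rho_1\le C'\delta_1^{\gamma\alpha_2}$. The only difference is that you spell out two small points the paper takes for granted (that $\rho_1$ is exhausting and that $\overline{F}(\partial\Omega_1)\subset\partial\Omega_2$), which is fine but does not change the argument.
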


 \begin{proof}
 Let $\delta_1$ and $\delta_2$ denote the boundary distance of $\Omega_1$ and $\Omega_2$ respectively. Choose $\rho_2\in PSH^-\cap C(\Omega_2)$ such that $-\rho_2\le C\delta_2^\alpha$ for some $C,\alpha>0$. Set $\rho_1:=\rho_2\circ F$. Clearly, $\rho_1\in PSH^-\cap C(\Omega_1)$. For fixed $z\in \Omega_1$, we choose $z^\ast\in \partial \Omega_1$ so that $|z-z^\ast|=\delta_1(z)$. Since $F(z^\ast)\in \partial \Omega_2$, it follows that
 \begin{eqnarray*}
 -\rho_1(z) & \le &  C\delta_2(F(z))^\alpha=C(\delta_2(F(z))-\delta_2(F(z^\ast)))^\alpha\\
            & \le & C |F(z)-F(z^\ast)|^\alpha\le C|z-z^\ast|^{\gamma\alpha}\\
            & \le & C\delta_1(z)^{\gamma\alpha}
 \end{eqnarray*}
 where $\gamma$ is the order of H\"older continuity of $F$ on $\overline{\Omega}_1$.
 \end{proof}

 \begin{example}
  Let $D\subset {\mathbb C}$ be a bounded Jordan domain which admits a uniformly H\"older continuous conformal map $f$ onto the unit disc $\Delta$ $($e.g. a quasidisc with a fractal boundary\,$)$. Set $F(z_1,\cdots,z_n):=(f(z_1),\cdots,f(z_n))$. Clearly, $F$ is a biholomorphic map between $D^n$ and $\Delta^n$ which extends to a H\"older continuous map between their closures. Let
  $$
  \Omega_2:=\{z\in {\mathbb C}^n:|z_1|^{a_1}+\cdots+|z_n|^{a_n}<1\}
  $$
  where $a_j>0$. Clearly, we have $\alpha(\Omega_2)>0$.
  By Proposition \ref{prop:properHolo}, we conclude that the domain\/ $\Omega_1:=F^{-1}(\Omega_2)$ satisfies $\alpha(\Omega_1)>0$. Notice that some parts of $\partial \Omega_1$ might be highly irregular.
 \end{example}

  A domain $\Omega\subset {\mathbb C}^n$ is called ${\mathbb C}-$convex if $\Omega\cap L$ is a simply-connected domain in $L$ for every affine complex line $L$. Clearly, every convex domain is ${\mathbb C}-$convex.

\begin{proposition}\label{prop:C-convex}
If\/ $\Omega\subset {\mathbb C}^n$ is a bounded ${\mathbb C}-$convex domain, then $\alpha(\Omega)\ge 1/2$.
\end{proposition}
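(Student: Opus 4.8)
The plan is to construct a negative plurisubharmonic exhaustion function $\rho$ of $\Omega$ with $-\rho\le C\delta^{1/2}$, where $\delta$ is the Euclidean distance to $\partial\Omega$; this is exactly what gives $\alpha(\Omega)\ge 1/2$, and it is the higher-dimensional analogue of the planar statement ``simply connected $\Rightarrow\alpha\ge 1/2$'' recalled in the introduction. The same square-root mechanism will be used, now fed by the linear structure supplied by $\mathbb C-$convexity in place of the Riemann map. The input I would draw from the structure theory of $\mathbb C-$convex domains (H\"ormander; Andersson-Passare-Sigurdsson) is: a $\mathbb C-$convex domain is linearly convex, and to each $p\in\partial\Omega$ one can attach a unit vector $a_p$ so that the affine function $\ell_p(z):=\langle z-p,a_p\rangle$ vanishes at $p$, is zero-free on $\Omega$, and has image $D_p:=\ell_p(\Omega)\subset\mathbb C^*$ omitting a closed half-line $R_p$ issuing from $0$. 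Two elementary facts, uniform in $p$, will be used: since $\ell_p$ is $1-$Lipschitz and $\ell_p(w+ta_p)=\ell_p(w)+t$, one has $\ell_p(B(w,\delta(w)))=B(\ell_p(w),\delta(w))\subset D_p$, hence $\dist(\ell_p(z),\partial D_p)\ge\delta(z)$ for all $z\in\Omega$; and $|\ell_p(z)|=\dist(z,H_p)$ with $H_p:=\{\ell_p=0\}$, so for a nearest boundary point $p(z)$ of $z$ one gets $|\ell_{p(z)}(z)|=\dist(z,H_{p(z)})=\delta(z)$, because $H_{p(z)}\ni p(z)$ and $H_{p(z)}\subset\mathbb C^n\setminus\Omega$.

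Next, since $\mathbb C\setminus R_p$ is simply connected and avoids $0$, a branch of the square root is defined on it; composing with $\ell_p$ gives a holomorphic $\varphi_p$ on $\Omega$ with $\varphi_p^2=\ell_p$ and $\varphi_p(\Omega)$ contained in an open half-plane $\Pi_p=\{\mathrm{Re}(c_pw)<0\}$ ($|c_p|=1$) having $0$ on its boundary line. Set $\rho_p:=\mathrm{Re}(c_p\varphi_p)$; it is pluriharmonic, strictly negative on $\Omega$, with $|\rho_p(z)|\le|\varphi_p(z)|=|\ell_p(z)|^{1/2}$. Finally put $\rho:=(\sup_{p\in\partial\Omega}\rho_p)^{*}$, the upper semicontinuous regularization; being the regularized supremum of a uniformly bounded family of pluriharmonic functions, $\rho$ is plurisubharmonic and $\le 0$.

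Two estimates close the argument. Upper bound: at the point $z$ one has $\rho(z)\ge\rho_{p(z)}(z)$, so $-\rho(z)\le|\rho_{p(z)}(z)|\le|\ell_{p(z)}(z)|^{1/2}=\delta(z)^{1/2}$ near $\partial\Omega$, and adjusting the constant on the compact part gives $-\rho\le C\delta^{1/2}$ on $\Omega$. Strict negativity: the squaring map sends $\varphi_p(\Omega)$ bijectively onto $D_p$, is $2({\rm diam}\,\Omega)^{1/2}-$Lipschitz there since $|\varphi_p|\le({\rm diam}\,\Omega)^{1/2}$, and sends $\partial\varphi_p(\Omega)$ outside $D_p$; hence $\dist(\varphi_p(z),\partial\varphi_p(\Omega))\ge c\,\dist(\ell_p(z),\partial D_p)\ge c\,\delta(z)$ with $c=c({\rm diam}\,\Omega)>0$ independent of $p$, and since $\varphi_p(\Omega)\subset\Pi_p$ while $|\rho_p|$ is the distance from $\varphi_p$ to $\partial\Pi_p$, we obtain $|\rho_p(z)|\ge c\,\delta(z)$ uniformly in $p$, so $\rho\le-c\,\delta<0$ by continuity of $\delta$. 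Thus $\rho$ is a genuine negative exhaustion with $-\rho\le C\delta^{1/2}$; a standard smoothing makes it continuous with the same estimate, and $\alpha(\Omega)\ge 1/2$ follows.

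The hard part is the structural input of the first paragraph. Mere linear convexity would let $\ell_p(\Omega)$ wind around the origin, which wrecks both the half-plane $\Pi_p$ and the uniform lower bound on $|\rho_p|$; one must genuinely use $\mathbb C-$convexity to guarantee that at every boundary point a supporting complex hyperplane can be chosen whose image misses a ray from $0$, and that the selection is regular enough for the supremum over $\partial\Omega$ to remain under control. Granting this, the remainder is elementary planar geometry.
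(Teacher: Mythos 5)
Your reduction to linear functionals and the square-root trick are appealing, but the structural claim in your first paragraph---that to each boundary point $p$ one can attach a supporting hyperplane for which $\ell_p(\Omega)$ omits a closed half-line issuing from $0$---is false, and this is exactly the point you yourself flagged as the hard part. What $\mathbb{C}$-convexity actually gives is that $\ell_p(\Omega)$ is again $\mathbb{C}$-convex, hence a bounded simply connected subdomain of $\mathbb{C}^*$ with $0\in\partial\,\ell_p(\Omega)$; but such a domain need not miss any ray. Already for $n=1$ (where the proposition is Koebe's $\alpha\ge 1/2$ for simply connected planar domains, recalled in the introduction) take $D=\exp(S)$ with $S=\{x+iy:\ x<0,\ |y-x|<1\}$: this is a bounded simply connected domain spiralling into $0\in\partial D$, and at $p=0$ the only ``supporting hyperplane'' is the point $\{0\}$, so $\ell_0(D)$ is a rotate of $D$ and meets every ray from the origin. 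A single-valued branch of $\sqrt{\ell_p}$ does exist on $\Omega$ (simple connectivity plus $0\notin\ell_p(\Omega)$ suffice), but its image is then not contained in any half-plane, so $\rho_p:=\mathrm{Re}(c_p\varphi_p)$ cannot be chosen negative and the barrier $\rho=(\sup_p\rho_p)^*$ never gets off the ground.

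The paper takes the same first two steps---linear convexity provides a complex hyperplane $H$ through the nearest boundary point $w^*$, and the projection of the $\mathbb{C}$-convex $\Omega$ onto the complex line $L$ through $w$ and $w^*$ is a bounded simply connected planar domain---but then it does not try to build a barrier by hand. Instead it invokes a one-variable result (Proposition 7.3 of \cite{BYChen}) that supplies a negative continuous exhaustion $\rho_L$ of the simply connected domain $\pi_L(\Omega)$ with $-\rho_L\le C\,\delta_L^{1/2}$; this is a Riemann map/Koebe statement, which is precisely what controls the square-root decay even when the planar image winds. The pullback $\pi_L^*\rho_L$ is then a negative psh function on $\Omega$, and comparing with the relative extremal function $\varrho_{z_0}$ at the point $w$ gives the estimate. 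So the essential difference is that the correct half-power boundary decay for a general simply connected planar domain is a conformal-mapping fact, not an algebraic one: if you want to preserve your ``supremum of pluriharmonic barriers over $\partial\Omega$'' architecture, you would have to replace $\mathrm{Re}(c_p\sqrt{\ell_p})$ by the pullback of the Riemann-map barrier on $\ell_p(\Omega)$, and at that point you are essentially running the paper's argument with different bookkeeping.
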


\begin{proof}
Let $w\in \Omega$ be an arbitrarily fixed point. Let $w^\ast$ be a point on $\partial \Omega$ satisfying $\delta(w)=|w-w^\ast|$. Let $L$ be the complex line determined by $w$ and $w^\ast$. Since every ${\mathbb C}-$convex domain is linearly convex (cf. \cite{HormanderConvexity}, Theorem 4.6.8), it follows that there exists an affine complex hyperplane $H\subset {\mathbb C}^n\backslash \Omega$ with $w^\ast\in H$. Since $|w-w^\ast|=\delta(w)$, so $H$ has to be\/ {\it orthogonal\/} to $L$. Let $\pi_L$ denote the natural projection ${\mathbb C}^n\rightarrow L$. Notice that $\pi_L(\Omega)$ is a bounded simply-connected domain in $L$ in view of \cite{HormanderConvexity}, Proposition 4.6.7. By Proposition 7.3 in \cite{BYChen}, there exists a negative continuous function $\rho_L$ on $\pi_L(\Omega)$ with
$$
(\delta_L/\delta_L(z^0_L))^{2}\le -\rho_L\le (\delta_L/\delta_L(z^0_L))^{1/2}
$$
  where $\delta_L$ denotes the boundary distance of $\pi_L(\Omega)$ and $z^0_L\in \pi_L(\Omega)$ satisfies $\delta_L(z^0_L)=\sup_{\pi_L(\Omega)}\delta_L$. Fix a point $z^0\in \Omega$. We have
  $$
  \delta_L(z^0_L)\ge \delta_L(\pi_L(z^0))\ge \delta(z^0).
  $$
   Set
 $$
 \varrho_{z_0}(z)=\sup\{u(z): u\in PSH^-(\Omega),\,u(z^0)\le -1\}.
 $$
 Clearly, $\varrho_{z_0}\in PSH^-(\Omega)$. Since $\Omega\subset \pi_L^{-1}(\pi_L(\Omega))$, it follows that $\pi_L^\ast (\rho_L)\in PSH^-(\Omega)$. Since $\pi_L^\ast(\delta_L)(w)=\delta(w)$ and
 $$
 \pi_L^\ast (\rho_L)(z^0)=\rho_L(\pi_L(z^0))\le -(\delta_L(\pi_L(z^0))/\delta_L(z^0_L))^2
 $$
  so
 \begin{eqnarray*}
 \varrho_{z_0}(w) & \ge & (\delta_L(z^0_L)/\delta_L(\pi_L(z^0)))^2\pi_L^\ast (\rho_L)(w)\\
 &\ge & -(\delta_L(z^0_L)^{3/2}/\delta_L(\pi_L(z^0))^2)\delta(w)^{1/2}\\
 & \ge & -(R^{3/2}/\delta(z^0)^2) \delta(w)^{1/2}
 \end{eqnarray*}
 where $R={\rm diam\,}(\Omega)$. Thus $\alpha(\Omega)\ge 1/2$.
\end{proof}

\begin{remark}
After the first version of this paper was finished, the author was kindly informed by Nikolai Nikolov that Proposition \ref{prop:C-convex} follows also from Proposition 3/$(ii)$ of\/ \cite{Nikolov}.
\end{remark}

Complex dynamics also provides interesting examples of domains with $\alpha(\Omega)>0$. Let $q(z)=\sum_{j=0}^d a_j z^j$ be a complex polynomial of degree $d\ge 2$. Let $q^n$ denote the $n-$iterates of $q$. The attracting basin at $\infty$ of $q$ is defined by
$$
F_\infty:=\{z\in \overline{\mathbb C}:q^n(z)\rightarrow \infty\ {\rm as\ }n\rightarrow \infty\},
$$
which is a domain in $\overline{\mathbb C}$ with $q(F_\infty)=F_\infty$. The Julia set of $q$ is defined by $J:=\partial F_\infty$. It is known that $J$ is always uniformly perfect. Thus $\alpha(F_\infty)>0$.

We say that $q$ is\/ {\it hyperbolic\/} if there exist constants $C>0$ and $\gamma>1$ such that
 $$
 \inf_J |(q^n)'|\ge C\gamma^n,\ \ \ \forall\,n\ge 1.
  $$
  Consider a holomorphic family $\{q_\lambda\}$ of hyperbolic polynomials of constant degree $d\ge 2$ over the unit disc $\Delta$. Let $F_\infty^\lambda$ denote the attracting basin at $\infty$ of $q_\lambda$ and let $J_\lambda:=\partial F_\infty^\lambda$. Let $\Omega_r$ denote the total space of $F_\infty^\lambda$ over the disc $\Delta_r:=\{z\in {\mathbb C}:|z|<r\}$ where $0<r\le 1$, that is
$$
\Omega_r=\{(\lambda,w):\lambda\in \Delta_r,\,w\in F_\infty^\lambda\}.
$$

\begin{proposition}\label{prop:basin}
For every $0<r<1$, $\Omega_r$ is a bounded domain in ${\mathbb C}^2$ with $\alpha(\Omega_r)>0$.
\end{proposition}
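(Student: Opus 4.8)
The plan is to construct, directly from the fibre dynamics, a negative continuous plurisubharmonic exhaustion $\rho$ of $\Omega_r$ obeying $-\rho\le C\,\delta^{\alpha}$ for some $\alpha>0$ ($\delta$ the boundary distance), which forces $\alpha(\Omega_r)\ge\alpha>0$. First I would pass to a bounded model: conjugating the $w$-variable by a M\"obius map $\iota_\lambda$ that depends holomorphically on $\lambda$ and sends a suitable (holomorphically varying, dynamically defined) point to $\infty$, one makes $\Omega_r$ relatively compact, turns the fibre dynamics into a holomorphic family $\hat q_\lambda$ of hyperbolic \emph{rational} maps of degree $d$ each with a superattracting fixed point of local degree $d$ at the origin, and identifies $F_\infty^\lambda$ with the immediate basin $A_\lambda$ of $0$. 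Replacing $q_\lambda$ by an iterate, I may further arrange that the family is uniformly expanding along its Julia sets: there are $\varepsilon_0>0$ and $\Gamma>1$, independent of $\lambda\in\overline{\Delta_r}$, so that $\hat q_\lambda$ dilates the distance to $J_\lambda$ by a factor $\ge\Gamma$ and has no critical point on the collar $\{w\in A_\lambda:\dist(w,J_\lambda)<\varepsilon_0\}$. That all such constants can be chosen uniform over the compact disc $\overline{\Delta_r}$ is precisely what hyperbolicity (an open condition) together with the continuous dependence of $J_\lambda$, of the critical points and of the expansion rate on $\lambda$ supplies, and is the only place where the family statement genuinely exceeds its fibrewise analogue. (When the $J_\lambda$ are all connected there is a shortcut: $A_\lambda$ is then a uniform John domain and the B\"ottcher coordinates assemble into a biholomorphism $\Omega_r\to\Delta_r\times\D$ that is H\"older up to the boundary, so Proposition~\ref{prop:properHolo} applies at once.)

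Next I would bring in the fibrewise Green function $g_\lambda(w):=g_{A_\lambda}(w,0)$. As $\hat q_\lambda:A_\lambda\to A_\lambda$ is proper of degree $d$ with $\hat q_\lambda^{-1}(0)=\{0\}$ of multiplicity $d$, the pull-back formula for Green functions under proper maps gives the dynamical identity $g_\lambda\circ\hat q_\lambda=d\,g_\lambda$ and hence the B\"ottcher-type representation $g_\lambda(w)=\lim_{n\to\infty}d^{-n}\log|\hat q_\lambda^n(w)|$. Since $0$ is totally invariant, $\hat q_\lambda^n$ has its only zero at $0$, so this limit is locally uniform on $\Omega_r\setminus\{w=0\}$; each $d^{-n}\log|\hat q_\lambda^n(w)|$ is plurisubharmonic on $\Omega_r$ (the orbit stays in the bounded totally invariant set $A_\lambda$, so $\hat q_\lambda^n$ is a finite holomorphic function of $(\lambda,w)$), and, using the functional equation together with the pluriharmonicity of $g_\lambda(w)-\log|w|$ near $\{w=0\}$, one concludes that $(\lambda,w)\mapsto g_\lambda(w)$ is a negative plurisubharmonic function on $\Omega_r$, continuous off $\{w=0\}$. (A posteriori, the exhaustion built below reproves that $\Omega_r$ is hyperconvex.)

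The crux of the proof is the \emph{uniform} H\"older estimate
\[
-g_\lambda(w)\ \le\ C\,\dist(w,J_\lambda)^{\alpha},\qquad\lambda\in\overline{\Delta_r},\ w\in A_\lambda,\qquad\alpha:=\tfrac{\log d}{\log\Gamma}>0,
\]
with $C$ independent of $\lambda$. To prove it, given $w$ close to $J_\lambda$ one iterates until the orbit first leaves the collar, at some time $k$; uniform expansion forces $\Gamma^k\asymp\varepsilon_0/\dist(w,J_\lambda)$, while $\hat q_\lambda^k(w)$ then lies in a fixed compact region bounded away from $0$ on which $g_\lambda\ge-M$ with $M$ uniform, so the identity $g_\lambda(w)=d^{-k}g_\lambda(\hat q_\lambda^k(w))$ yields $-g_\lambda(w)\le M d^{-k}=M(\Gamma^{-k})^{\alpha}\asymp C\,\dist(w,J_\lambda)^{\alpha}$. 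Equivalently, since the $J_\lambda$ are uniformly perfect with a modulus uniform over $\overline{\Delta_r}$, this is a parameter-uniform quantitative form of the Carleson--Totik criterion for $\alpha(F_\infty^\lambda)$.

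Finally I would put $\rho(\lambda,w):=\max\{g_\lambda(w),\ \log(|\lambda|/r),\ -M\}$: this is a negative continuous plurisubharmonic function on $\Omega_r$, and the key estimate makes its sublevel sets $\{\rho<c\}$ relatively compact (they are driven off the Julia sets by the estimate and off $\{|\lambda|=r\}$ by the term $\log(|\lambda|/r)$). Since in a hyperbolic family the Julia sets move by a holomorphic motion, hence Lipschitz-continuously in $\lambda$, one has $\delta(\lambda,w)\asymp\min\{r-|\lambda|,\dist(w,J_\lambda)\}$; combining this with $-\rho\le\min\{-g_\lambda(w),\,\log(r/|\lambda|)\}$, the key estimate for the first entry, and $\log(r/|\lambda|)\le C(r-|\lambda|)$ for the second, gives $-\rho\le C\,\delta^{\alpha}$ on all of $\Omega_r$ (after replacing $\alpha$ by $\min\{\alpha,1\}$ if need be), whence $\alpha(\Omega_r)\ge\alpha>0$. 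The main obstacle I anticipate is not conceptual but the bookkeeping needed to make every constant — the expansion rate, the width of the collar, the distance from the critical points to $J_\lambda$, and the Lipschitz constant of the motion — genuinely uniform in $\lambda\in\overline{\Delta_r}$, which is exactly the leverage the hyperbolicity hypothesis was meant to give.
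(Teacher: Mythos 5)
Your argument is correct in outline and is a genuinely different route from the paper's, with some real advantages. The paper invokes the Ma\~n\'e--Sad--Sullivan $\lambda$-lemma together with Slodkowski's theorem to get a jointly H\"older continuous quasiconformal extension, from which it deduces the distance comparison; it quotes the proof of Theorem~3.2 in Carleson--Gamelin for the fibrewise H\"older estimate; and it asserts the plurisubharmonicity of $g(\lambda,w):=g_{F_\infty^\lambda}(w,\infty)$ from the escape-rate formula. You instead (i) pass first to a bounded fibrewise model by a M\"obius conjugation sending a holomorphically varying interior point $a_\lambda$ of the filled Julia set to $\infty$ --- this is actually a welcome repair: $F_\infty^\lambda$ is unbounded in $\mathbb{C}$, so $\Omega_r$ as literally written is not bounded in $\mathbb{C}^2$; moreover in the unbounded picture the escape-rate formula produces the \emph{nonnegative} psh function $G_\lambda=\lim d^{-n}\log|q_\lambda^n|$, whereas the paper's sign convention wants the \emph{negative} Green function $g_\lambda=-G_\lambda$, and ``$\log|q_\lambda^n|$ is psh, hence so is $g$'' does not by itself give the sign one needs (it works, but only because $G$ is in fact pluriharmonic on $\Omega_1$, an unstated step), while in your bounded model $\log|\hat q_\lambda^n|$ is psh on $\Omega_r$ with the right sign immediately; (ii) prove the fibrewise H\"older estimate from scratch by uniform expansion on a collar of $J_\lambda$ rather than citing it; (iii) replace Slodkowski's theorem by the elementary remark that $\lambda\mapsto f_\lambda(z)$, being holomorphic and uniformly bounded, is uniformly Lipschitz on $\overline{\Delta_r}$, which gives the distance comparison directly and is even a bit sharper in the $\lambda$-direction; and (iv) write down the explicit negative psh exhaustion $\rho=\max\{g_\lambda,\log(|\lambda|/r),-M\}$, whose middle entry controls the vertical component $\{|\lambda|=r\}$ of $\partial\Omega_r$, a piece the paper's argument leaves implicit.

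Two small points to tighten. You should say where $a_\lambda=\iota_\lambda^{-1}(\infty)$ comes from, and why it varies holomorphically and stays inside the interior of $K_\lambda$ at uniform distance from $J_\lambda$ over $\overline{\Delta_r}$: continuing a finite attracting cycle of $q_0$ through the hyperbolic family works, and total invariance of $K_\lambda$ then keeps the full backward orbit of $a_\lambda$ off $A_\lambda$, so that $\hat q_\lambda^n$ is indeed pole-free on $\Omega_r$, which is exactly what your plurisubharmonicity claim uses. And you never actually argue that $\Omega_r$ is connected --- it is, since each $F_\infty^\lambda$ is connected and the fibres move continuously in $\lambda$ via the motion you already invoke, but ``domain'' requires it and the sentence is worth adding.
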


\begin{proof}
 We first show that $\Omega_r$ is a domain. Ma$\tilde{\rm n}\acute{\rm e}$, Sad and
Sullivan \cite{SullivanMotion} showed that there exists a family of maps $\{f_\lambda\}_{\lambda\in \Delta}$ such that
\begin{enumerate}
 \item $f_\lambda:J_0\rightarrow J_\lambda$ is a homeomorphism for each $\lambda\in \Delta$;
 \item $f_0={\rm id}|_{J_0}$;
 \item $f(\lambda,z):=f_\lambda(z)$ is holomorphic on $\Delta$ for each $z\in J_0$;
 \item $q_\lambda=f_\lambda\circ q_0\circ f_\lambda^{-1}$ on $J_\lambda$, for each $\lambda\in \Delta$.
\end{enumerate}
In other words, properties $(1)\sim(3)$ say that $\{f_\lambda\}_{\lambda\in \Delta}$ gives a\/ {\it holomorphic motion\/} of $J_0$. By a result of Slodkowski \cite{Slodkowski}, $\{f_\lambda\}_{\lambda\in \Delta}$ may be extended to a holomorphic motion $\{\tilde{f}_\lambda\}_{\lambda\in \Delta}$ of $\overline{\mathbb C}$ such that

\medskip

a)  $\tilde{f}_\lambda:\overline{{\mathbb C}}\rightarrow \overline{{\mathbb C}}$ is a
quasiconformal map of dilatation $\le
\frac{1+|\lambda|}{1-|\lambda|}$, for each $\lambda\in \Delta$;

b) $\tilde{f}_\lambda:F^0_\infty\rightarrow F^\lambda_\infty$ is a homeomorphism for each $\lambda\in \Delta$;

c)  $\tilde{f}(\lambda,z):=\tilde{f}_\lambda(z)$ is jointly H\"older continuous in $(\lambda,z)$.

\medskip

It follows immediately that $\Omega_r$ is a domain in ${\mathbb C}^n$ for each $r\le 1$. Let $\delta_\lambda$ and $\delta$ denote the boundary distance of $F_\infty^\lambda$ and $\Omega_1$ respectively. We claim that for every $0<r<1$ there exists $\gamma>0$ such that
\begin{equation}\label{eq:distComparison}
\delta_\lambda(w)\le C \delta(\lambda,w)^\gamma,\ \ \ \lambda\in \Delta_r,\,w\in F_\infty^\lambda.
\end{equation}
To see this, choose $(\lambda',w_{\lambda'})$ where $w_{\lambda'}\in J_{\lambda'}$, such that
$$
\delta(\lambda,w)=\sqrt{|\lambda-\lambda'|^2+|w-w_{\lambda'}|^2}.
$$
Write $w_{\lambda'}=\tilde{f}(\lambda',z_0)$ where $z_0\in J_0$. Since $\tilde{f}(\lambda,z_0)\in J_\lambda$, it follows that
\begin{eqnarray*}
 \delta_\lambda(w) & \le & |w-\tilde{f}(\lambda,z_0)|\le |w-w_{\lambda'}|+|\tilde{f}(\lambda',z_0)-\tilde{f}(\lambda,z_0)|\\
 & \le & |w-w_{\lambda'}|+C |\lambda-\lambda'|^\gamma\\
 & \le & \delta(\lambda,w)+C\delta(\lambda,w)^\gamma\\
 & \le & C'\delta(\lambda,w)^\gamma
\end{eqnarray*}
where $\gamma$ is the order of H\"older continuity of $\tilde{f}$ on $\Omega_r$.

¡¡Recall that the Green function $g_\lambda(w):=g_{F_\infty^\lambda}(w,\infty)$ at $\infty$ of $F_\infty^\lambda$ satisfies
 \begin{equation}\label{eq:GreenDynimics}
  g_\lambda(w)=\lim_{n\rightarrow \infty} d^{-n}\log |q_\lambda^n(w)|,\ \ \ w\in F_\infty^\lambda
 \end{equation}
 where the convergence is uniform on compact subsets of $F_\infty^\lambda$ (cf. \cite{RansfordBook}, Corollary 6.5.4). Actually the proof of Corollary 6.5.4 in \cite{RansfordBook} shows that  the convergence is also uniform on compact subsets of $\Omega_1$. Since $\log |q_\lambda^n(w)|$ is psh in $(\lambda,w)$, so is $g(\lambda,w):=g_\lambda(w)$. By (\ref{eq:distComparison}) it suffices to verify that for every $0<r<1$ there are positive constants $C,\alpha$ such that $-g_\lambda(w)\le C\,\delta_\lambda(w)^{\alpha}$ for each $\lambda\in \Delta_r$ and $w\in F_\infty^\lambda$. This can be verified similarly as the proof of Theorem 3.2 in \cite{CarlesonGamelin}.
\end{proof}

\begin{conjecture}
Let $D\subset {\mathbb C}$ be a domain with $\alpha(D)>0$. Let $\{f_\lambda\}_{\lambda\in \Delta}$ be a holomorphic motion of $D$. Let
$$
\Omega_r:=\{(\lambda,w): \lambda\in \Delta_r,\,w\in f_\lambda(D)\}.
$$
One has $\alpha(\Omega_r)>0$ for each $r<1$.
 \end{conjecture}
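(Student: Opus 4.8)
The plan is to imitate the scheme of Proposition~\ref{prop:basin}, reducing the assertion to the construction of a single good plurisubharmonic function on $\Omega_r$. By Slodkowski's theorem the given holomorphic motion $\{f_\lambda\}_{\lambda\in\Delta}$ of $D$ extends to a holomorphic motion $\{\tilde f_\lambda\}_{\lambda\in\Delta}$ of $\overline{\mathbb C}$ by quasiconformal maps of dilatation $\le\frac{1+|\lambda|}{1-|\lambda|}$, with $\tilde f(\lambda,z):=\tilde f_\lambda(z)$ jointly Hölder continuous on $\overline{\Delta_{r'}}\times\overline{\mathbb C}$ for every $r'<1$. Since each $\tilde f_\lambda$ is a homeomorphism of $\overline{\mathbb C}$ carrying $D$ onto $f_\lambda(D)$, joint continuity shows that $\Omega_r=\{(\lambda,\tilde f_\lambda(z)):\lambda\in\Delta_r,\ z\in D\}$ is a domain in $\mathbb C^2$ (bounded when $D$ is). Repeating the estimate of Proposition~\ref{prop:basin} almost verbatim — for $(\lambda,w)\in\Omega_r$ pick a nearest boundary point $(\lambda^\ast,w^\ast)$ of $\Omega_r$; if $w^\ast\in\partial f_{\lambda^\ast}(D)$ write $w^\ast=\tilde f_{\lambda^\ast}(z_0)$ with $z_0\in\partial D$, note $\tilde f_\lambda(z_0)\in\partial f_\lambda(D)$, and use the Hölder continuity of $\tilde f(\cdot,z_0)$; if instead $(\lambda^\ast,w^\ast)$ lies on the roof $\{|\lambda|=r\}$ then $\delta_{\Omega_r}(\lambda,w)\ge r-|\lambda|$ is bounded below — one gets, for the boundary distance $\delta_\lambda$ of $f_\lambda(D)\subset\mathbb C$, a bound $\delta_\lambda(w)\le C\,\delta_{\Omega_r}(\lambda,w)^{\gamma}$ away from a compact subset of $\Omega_r$. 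Hence it suffices to produce, for some $\alpha>0$, a negative continuous psh exhaustion $\Phi$ of $\Omega_r$ with $-\Phi(\lambda,w)\le C\,\delta_\lambda(w)^{\alpha}$ off a compact set: then $-\Phi\le C\,\delta_{\Omega_r}^{\gamma\alpha}$, so $\alpha(\Omega_r)>0$.

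The obvious candidate is $\Phi(\lambda,w):=\max\{\rho_D(\tilde f_\lambda^{-1}(w)),-1\}$, where $\rho_D\in PSH^-\cap C(D)$ is a bounded exhaustion of $D$ with $-\rho_D\le C\,\delta_D^{\alpha_0}$ for some $0<\alpha_0<\alpha(D)$, supplied by the hypothesis $\alpha(D)>0$. As $\tilde f_\lambda^{-1}$ is quasiconformal, hence Hölder, one has $\delta_D(\tilde f_\lambda^{-1}(w))\le C\,\delta_\lambda(w)^{1/K}$ with $K=\frac{1+r}{1-r}$ for $\lambda\in\Delta_r$, so $\Phi$ has the required boundary decay with $\alpha=\alpha_0/K$, and, after forming $\max\{\Phi,\,A\log(|\lambda|^2/r^2)\}$ for suitable $A$, it is also an exhaustion. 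The trouble — and this is where I expect the essential difficulty to sit — is that $\Phi$ need not be plurisubharmonic. For each fixed $z\in D$ the map $\lambda\mapsto\tilde f_\lambda(z)$ is holomorphic, so $\Omega_r$ is foliated by the holomorphic graphs $\Gamma_z=\{(\lambda,\tilde f_\lambda(z)):\lambda\in\Delta_r\}$ and $\Phi$ is \emph{constant} along each leaf; but $(\lambda,w)\mapsto\tilde f_\lambda^{-1}(w)$ is only continuous, not holomorphic in $\lambda$, so the complex Hessian of $\Phi$ in directions transverse to the foliation can be negative. For instance, for the motion $f_\lambda(z)=z+\lambda\bar z$ of the unit disc one computes $\tilde f_\lambda^{-1}(w)=(w-\lambda\bar w)/(1-|\lambda|^2)$, and the $(\lambda,w)$-Hessian of $|\tilde f_\lambda^{-1}(w)|^2-1$ has negative determinant for $w\neq0$. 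So the naive pullback fails, and one genuinely needs a plurisubharmonic substitute with the same boundary behaviour.

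It is worth noting that even the pseudoconvexity of $\Omega_r$ is not evident: the natural guesses $\pm\log\delta_\lambda(w)=\pm\inf_{z\in\partial D}\log|w-\tilde f_\lambda(z)|$ are an infimum of functions psh in $(\lambda,w)$, respectively a supremum of plurisuperharmonic ones, so neither is psh, and the Kontinuit\"atssatz through a single leaf $\Gamma_{z_0}$ — the zero set of the holomorphic function $w-\tilde f_\lambda(z_0)$ — does not by itself prevent an analytic disc with boundary in $\Omega_r$ from touching $\partial\Omega_r$ there. I see three routes forward, and the crux of the conjecture is to make one of them work. (a) Prove $\Omega_r$ hyperconvex directly: exhaust $D$ by nicer subdomains $D_j\uparrow D$, argue that each $\{(\lambda,\tilde f_\lambda(z)):\lambda\in\Delta_r,\ z\in D_j\}$ is pseudoconvex because its boundary is a union of pairwise disjoint holomorphic graphs (a Levi-flat-type hypersurface over $\partial D_j$), pass to the increasing limit, and then bound the relative extremal function $\varrho_{\Omega_r}$ from below by barriers built from $\rho_D$ along individual leaves. (b) Restrict the motion to $\Delta_{1/3}$ and use the Bers-Royden canonical extension, which depends on $\lambda$ more regularly, hoping that it makes $\Phi$ (or a variant of it) genuinely psh, and then bootstrap to general $r<1$. (c) Look for a canonical psh function on $\Omega_r$ playing the role that $g_\lambda(w)=\lim_n d^{-n}\log|q_\lambda^n(w)|$ — a locally uniform limit of functions manifestly psh in $(\lambda,w)$ — played in Proposition~\ref{prop:basin}; in the absence of an underlying dynamical system it is unclear what this should be, and identifying it, or showing no such function exists, is the heart of the matter.
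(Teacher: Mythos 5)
This statement is labeled as a Conjecture in the paper, and the paper offers no proof of it; so there is nothing to compare your argument against. What you have written is not a proof but an honest diagnosis of why the obvious attack fails, and that diagnosis is correct. Your imitation of Proposition~\ref{prop:basin} correctly isolates the two ingredients that made the dynamical case work: a H\"older-continuous Slodkowski extension of the motion, which yields the comparison $\delta_\lambda(w)\le C\,\delta_{\Omega_r}(\lambda,w)^\gamma$, and a \emph{canonical} plurisubharmonic function on the total space with the right boundary decay. In the basin case the latter is the escape-rate Green function $g(\lambda,w)=\lim_n d^{-n}\log|q_\lambda^n(w)|$, a locally uniform limit of functions $\log|q_\lambda^n(w)|$ that are manifestly psh in $(\lambda,w)$; your computation with $f_\lambda(z)=z+\lambda\bar z$ (at $\lambda=0$ the $(\lambda,w)$-Hessian of $|\tilde f_\lambda^{-1}(w)|^2$ is $\begin{pmatrix}3|w|^2 & -2\bar w\\ -2w & 1\end{pmatrix}$, with determinant $-|w|^2<0$) correctly shows that the naive pullback $\rho_D\circ\tilde f_\lambda^{-1}$ does not supply such a function, because the inverse of the motion is only measurable-quasiconformal and not holomorphic in $\lambda$.

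Two small remarks. First, the three routes you sketch are reasonable, but note that route (a) has a known head start: it is a classical consequence of Slodkowski's work on polynomial hulls that the total space of the complement of a holomorphically moving compact set is pseudoconvex, so the question of pseudoconvexity of $\Omega_r$ is less in doubt than you suggest; the genuine open issue is quantitative hyperconvexity, i.e. producing a bounded psh exhaustion with H\"older decay. Second, in route (a), your proposed leafwise barriers $\{(\lambda,\tilde f_\lambda(z)):\lambda\in\Delta_r\}$ only give pluriharmonic functions along single graphs, and patching them into a globally psh function with controlled decay is precisely where the difficulty recurs, so (a) is not obviously easier than (c). Overall you have correctly recognized the statement as open and located the essential obstruction; do not mistake this for a proof, but the analysis is sound and useful.
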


 \bigskip

 \textbf{Acknowledgements}

 \bigskip

 It is my pleasure to thank the valuable comments from the referees, Prof. Nikolai Nikolov and Dr. Xieping Wang.


\begin{thebibliography}{99}

  \bibitem{Adachi} M. Adachi and J. Brinkschulte, {\it A global estimate for the Diederich-Fornaess index of weakly pseudoconvex domains}, Nagoya Math. J. {\bf 220} (2015), 67--80.

 \bibitem{Andrievskii} V. V. Andrievskii, {\it On sparse sets with the Green function of the highest smoothness}, Computational Methods and Function Theory, {\bf 5} (2005), 301--322.

 \bibitem{BarrettIrregular} D. Barrett, {\it Irregularity of the Bergman projection on a smooth bounded domain in ${\mathbb C}^2$}, Ann. of Math. {\bf 119} (1984), 431--436.

     \bibitem{BarrettWorm} D. Barrett, {\it Behavior of the Bergman projection on the Diederich-Fornaess worm}, Acta Math. {\bf 168} (1992), 1--10.

     \bibitem{Berger} M. Berger, {\it Encounter with a geometer: Eugenio Calabi}, Manifolds and Geometry (Pisa, 1993), 20--60, Sympos. Math. {\bf XXXVI}, Cambridge Univ. Press, Cambridge, 1996.

                    \bibitem{BerndtssonCharpentier} B. Berndtsson and Ph. Charpentier, {\it A Sobolev mapping property of the Bergman kernel}, Math. Z. {\bf 235} (2000), 1--10.

                 \bibitem{Bertilsson} D. Bertilsson, {\it Coefficient estimates for negative powers of the derivative of univalent functions}, Ark. Mat. {\bf 36} (1998), 255--273.

      \bibitem{Blocki93} Z. Blocki, {\it Estimates for the complex Monge-Amp$\grave{e}$re operator}, Bull. Polish Acad. Sci. {\bf 41} (1993), 151--157.

      \bibitem{BlockiLecture} Z. Blocki, {\it The complex Monge-Amp$\grave{e}$re operator in pluripotential theory}, lecture notes, 2002, available at http://gamma.im.uj.edu.pl/blocki.

      \bibitem{BlockiGreen} Z. Blocki, {\it The Bergman metric and the pluricomplex Green function}, Trans. Amer. Math. Soc. {\bf 357} (2004), 2613--2625.

      \bibitem{BlockiPflug} Z. Blocki and P. Pflug, {\it Hyperconvexity and Bergman completeness}, Nagoya Math. J. {\bf 151} (1998), 221--225.

\bibitem{BoasStraube} H. P. Boas and E. J. Straube, {\it Sobolev estimates for the $\bar{\partial}-$Neumann operator on domains in ${\mathbb C}^n$ admitting a defining function that is plurisubharmonic on the boundary}, Math. Z. {\bf 206} (1991), 81--88.

\bibitem{Brennan} J. E. Brennan, {\it The integrability of the derivative in conformal mapping}, J. London Math. Soc. {\bf 18} (1978), 261--272.

 \bibitem{Calabi} E. Calabi, {\it Isometric imbedding of complex manifolds}, Ann. of Math. {\bf 58} (1953), 1--23.

\bibitem{Carleson} L. Carleson, Selected Problems on Exceptional Sets, Van Nostrand, 1967.

\bibitem{CarlesonGamelin} L. Carleson and T. W. Gamelin, Complex Dynamics, Springer-Verlag, 1992.

\bibitem{CarlesonJones} L. Carleson and P. W. Jones, {\it On coefficient problems for univalent functions and conformal dimension}, Duke Math. J. {\bf 66} (1992), 169--206.

\bibitem{CarlesonMakarov} L. Carleson and N. G. Makarov, {\it Some results connected with Brennan's conjecture}, Ark. Mat. {\bf 32} (1994), 33--62.

\bibitem{CarlesonTotik} L. Carleson and V. Totik, {\it H\"older continuity of Green's functions}, Acta Sci. Math. (Szeged) {\bf 70} (2004), 558--608.

\bibitem{Chen99} B.-Y. Chen, {\it Completeness of the Bergman metric on non-smooth pseudoconvex domains}, Ann. Polon. Math. {\bf 71} (1999), 241--251.

\bibitem{BYChen} B.-Y. Chen, {\it Parameter dependence of the Bergman kernels}, Adv. Math. {\bf 299} (2016), 108--138.

\bibitem{ChenFu11} B.-Y. Chen and S. Fu, {\it Comparison of the Bergman and Szeg\"o kernels}, Adv. Math. {\bf 228} (2011), 2366--2384.

\bibitem{Christ91} M. Christ, {\it On the $\bar{\partial}$ equation in weighted $L^2$ norms in ${\mathbb C}^1$}, J. Geom. Anal. {\bf 1} (1991), 193--230.

\bibitem{Christ13} M. Chirist, {\it Upper bounds for the Bergman kernels associated to positive line bundles with smooth Hermitian metric}, arXiv:\,1308.0062.

\bibitem{Demailly87} J.-P. Demailly, {\it Mesures de Monge-$Amp\grave ere$ et mesures
pluriharmoniques}, Math. Z. {\bf 194} (1987), 519--564.

\bibitem{Delin96} H. Delin, {\it Pointwise estimates for the weighted Bergman projection kernel in ${\mathbb C}^n$, using a weighted $L^2$ estimate for the $\bar{\partial}$ equation}, Ann. Inst. Fourier (Grenoble) {\bf 48} (1998), 967--997.

\bibitem{DiederichFornaess77}
K. Diederich and J. E. Fornaess, {\it Pseudoconvex domains: bounded strictly plurisubharmonic exhaustion functions}, Invent. Math. {\bf 39} (1977), 129--141.

\bibitem{DiederichFornaessProper} K. Diederich and J. E. Fornaess, {\it Proper holomorphic maps onto pseudoconvex domains with real-analytic boundary}, Ann. of Math. {\bf 110} (1979), 575--592.

 \bibitem{DiederichOhsawa}  K. Diederich and T. Ohsawa, {\em An estimate for the Bergman
distance on pseudoconvex domains}, Ann. of Math. {\bf 141} (1995),
181--190.

\bibitem{DonnellyFefferman} H. Donnelly and C. Fefferman, \emph{$L^2$-cohomology and index theorem for the Bergman metric}, Ann. of Math. \textbf{118} (1983), 593--618.

\bibitem{EdholmMcNeal} L. Edholm and J. McNeal, {\it The Bergman projection on flat Hartogs triangles: $L^p$ boundedness}, Proc. Amer. Math. Soc. {\bf 144} (2016), 2185--2196.

\bibitem{FuShaw} S. Fu and M. C. Shaw, {\it The Diederich-Fornaess exponent and non-existence of Stein domains with Levi-flat boundaries}, J. Goem. Anal. {\bf 26} (2016), 220--230.

\bibitem{HarringtonPSH} P. S. Harrington, {\it The order of plurisubharmonicity on pseudoconvex domains with Lipschitz boundaries}, Math. Res. Lett. {\bf 14} (2007), 485--490.

\bibitem{Hedberg} L. Hedberg, {\it Approximation in the mean by analytic functions}, Trans. Amer. Math. Soc. {\bf 163} (1972), 157--171.

\bibitem{HerbortHyperconvex} G. Herbort, {\it The Bergman metric on hyperconvex domains}, Math. Z. {\bf 232} (1999), 183--196.

\bibitem{HerbortGreen} G. Herbort, {\it The pluricomplex Green function on pseudoconvex domains with a smooth boundary}, Inter. J. Math. {\bf 11} (2000), 509--522.

\bibitem{HormanderConvexity} L. H\"ormander, Notions of Convexity, Birkh\"auser, 1994.

\bibitem{JarnickiPflug} M. Jarnicki and P. Pflug, Invariant distances and metrics in complex analysis, De Gruyter Expositions in Math. {\bf 9}, 1993.

\bibitem{KrantzWorm} S. G. Krantz and M. M. Peloso, {\it Analysis and geometry on worm domains}, J. Geom. Anal. {\bf 18} (2008), 478--510.

\bibitem{Lanzani} L. Lanzani, {\it Harmonic analysis techniques in several complex variables}, arXiv:\,1509.08786.

\bibitem{Lempert} L. Lempert, {\it On the boundary behavior of holomorphic mappings}, in: Contributions to several complex variables (A. Howard and P.-M. Wong eds.), Aspects of Math. {\bf E9}, Vieweg, Braunschweig, 1986, pp. 193--215.

 \bibitem{Lindholm01} N. Lindholm, {\it Sampling in weighted $L^p$ spaces of entire functions in ${\mathbb C}^n$ and estimates of the Bergman kernel}, J. Func. Anal. {\bf 182} (2001), 390--426.

      \bibitem{MaMarinescu} X. Ma and G. Marinescu, Holomorphic Morse inequalities
 and Bergman kernels, Progress in Math. {\bf 254}, Birkh\"auser,
 2006.

 \bibitem{SullivanMotion} R. Ma$\tilde{\rm n}\acute{\rm e}$, P. Sad and D.
Sullivan, {\it On the dynamics of rational maps}, Ann. Sci. Ecole
Norm. Sup. {\bf 16} (1983), 193--217.

 \bibitem{Nikolov} N. Nikolov and M. Trybula, {\it The Kobayashi ball of $({\mathbb C}-)$convex domains}, Monash. Math. {\bf 177} (2015), 627--635.

\bibitem{OhsawaHyperconvex} T. Ohsawa, {\it On the Bergman kernel of hyperconvex domains}, Nagoya Math. J. {\bf 129} (1993), 43--52.

\bibitem{OhsawaTakegoshi87}
T. Ohsawa and K. Takegoshi, {\it On the extension of $L^2$ holomorphic functions}, Math. Z. {\bf 195} (1987), 197--204.

\bibitem{PoletskyHardy} E. A. Poletsky and M. Stessin, {\it Hardy and Bergman spaces on hyperconvex domains and their composition operators}, Indiana Univ. Math. J. {\bf 57} (2008), 2153--2201.

    \bibitem{PommerenkePerfect} Ch. Pommerenke, {\it Uniformly perfect sets and the Poincar\'e metric}, Arch. Math. (Basel), {\bf 32} (1979), 192--199.

\bibitem{PommerenkeBook} Ch. Pommerenke, Boundary behavior of conformal maps, Springer-Verlag, Berlin, 1992.

\bibitem{RansfordBook} T. J. Ransford, Potential theory in the complex plane, Cambridge Univ. Press, 1995.

\bibitem{Schiffer} M. Schiffer, {\it The kernel function of an orthonormal system}, Duke Math. J. {\bf 13} (1946), 629--540.

\bibitem{Skwarczynski} M. Skwarczy\'nski, {\it Biholomorphic invariants related to the Bergman function}, Dissertationes Math. {\bf 173}, 1980.

\bibitem{Slodkowski} Z. Slodkowski, {\it Holomorphic motions and polynomial
hulls}, Proc. Amer. Math. Soc. {\bf 111} (1991), 347--355.

    \bibitem{Totik} V. Totik, {\it Metric properties of harmonic measures}, Mem. Amer. Math. Soc. {\bf 184} (2006), 163\,pp.

    \bibitem{Walsh} J. B. Walsh, {\it Continuity of envelopes of plurisubharmonic functions}, J. Math. Mech. {\bf 18} (1968), 143--148.

    \bibitem{Zelditch16} S. Zelditch, {\it Off-diagonal decay of toric Bergman kernels}, arXiv:\,1603.08281.
   \end{thebibliography}
\end{document}